\numberwithin{equation}{section}
\let\OLDthebibliography\thebibliography
\renewcommand\thebibliography[1]{
  \OLDthebibliography{#1}
  \setlength{\parskip}{0pt}
  \setlength{\itemsep}{2pt plus 0.5ex}
}
\def\@cite#1#2{{\m@th\upshape\bfseries%
[{#1\if@tempswa{\m@th\upshape\mdseries, #2}\fi}]}}
\theoremstyle{plain}
\newtheorem{theorem}{Theorem}[section]
\newtheorem{corollary}[theorem]{Corollary}
\newtheorem{proposition}[theorem]{Proposition}
\newtheorem{lemma}[theorem]{Lemma}
\theoremstyle{definition}
\newtheorem{definition}[theorem]{Definition}
\newtheorem{example}[theorem]{Example}
\newtheorem{remark}[theorem]{Remark}
\theoremstyle{remark}
  \newcommand{\A}{{\mathcal{A}}}
  \newcommand{\B}{{\mathcal{B}}}
  \newcommand{\D}{{\mathcal{D}}}
  \newcommand{\E}{{\mathcal{E}}}
  \newcommand{\F}{{\mathcal{F}}}
  \newcommand{\G}{{\mathcal{G}}}
  \newcommand{\I}{{\mathcal{I}}}
  \newcommand{\J}{{\mathcal{J}}}
  \newcommand{\K}{{\mathcal{K}}}
\renewcommand{\L}{{\mathcal{L}}}
  \newcommand{\M}{{\mathcal{M}}}
\renewcommand{\O}{{\mathcal{O}}}
\renewcommand{\P}{{\mathcal{P}}}
  \newcommand{\Q}{{\mathcal{Q}}}
  \newcommand{\R}{{\mathcal{R}}}
\renewcommand{\S}{{\mathcal{S}}}
  \newcommand{\T}{{\mathcal{T}}}
  \newcommand{\X}{{\mathcal{X}}}
  \newcommand{\Y}{{\mathcal{Y}}}
  \newcommand{\Z}{{\mathcal{Z}}}
\def\al{\alpha}
\def\de{\delta}
\def\ka{\kappa}
\def\la{\lambda}
\def\si{\sigma}
\newcommand{\bC}{\mathbb{C}}
\newcommand{\bN}{\mathbb{N}}
\newcommand{\fC}{{\mathfrak{C}}}
\newcommand{\fH}{{\mathfrak{H}}}
\newcommand{\fK}{{\mathfrak{K}}}
\newcommand{\fS}{{\mathfrak{S}}}
\newcommand{\fX}{{\mathfrak{X}}}
\newcommand{\foral}{\text{ for all }}
\newcommand{\qand}{\quad\text{and}\quad}
\newcommand{\qiff}{\quad\text{if and only if}\quad}
\newcommand{\qfor}{\quad\text{for}\quad}
\newcommand{\ca}{\mathrm{C}^*}
\newcommand{\cenv}{\mathrm{C}^*_{\textup{env}}}
\newcommand{\tenv}{\mathcal{T}_{\textup{env}}}
\newcommand{\ol}{\overline}
\newcommand{\wt}{\widetilde}
\newcommand{\cl}[1]{\mathcal{#1}}
\newcommand{\bb}[1]{\mathbb{#1}}
\newcommand{\ad}{\operatorname{ad}}
\newcommand{\env}{\operatorname{\textup{env}}}
\newcommand{\id}{{\operatorname{id}}}
\newcommand{\spn}{\operatorname{span}}
\newcommand{\sca}[1]{\left\langle#1\right\rangle} %\sca{a,b} =<a,b>
\newcommand{\nor}[1]{\left\Vert #1\right\Vert} %\nor{x}=||x||
\newcommand{\un}[1]{{\underline{#1}}} %underline math symbols
\newcommand{\quo}[2]{{\raisebox{.1em}{$#1$}\left/ \, \raisebox{-.1em}{$#2$}\right.}} % quotient of objects
\begin{document}
%%%%%%%%%%%%%%%%%%%%%%%%%%%%%%%%

%%%%%%%%%%%%%%%%%%%%%%%%%%%%%%%%
\title[Morita equivalence for operator systems]{Morita equivalence for operator systems}

\author[G.K. Eleftherakis]{George K. Eleftherakis}
\address{Department of Mathematics\\Faculty of Sciences\\University of Patras\\26504 Patras\\Greece}
\email{gelefth@math.upatras.gr}

\author[E.T.A. Kakariadis]{Evgenios T.A. Kakariadis}
\address{Department of Mathematics\\ National and Kapodistrian University of Athens\\ Athens\\ 1578 84\\ Greece}
\email{evkakariadis@math.uoa.gr}

\author[I.G. Todorov]{Ivan G. Todorov}
\address{School of Mathematical Sciences\\ University of Delaware\\ 501 Ewing Hall\\ Newark\\ DE 19716\\ USA}
%and Mathematical Sciences Research Center\\ Queens' University Belfast\\ Belfast\\ BT7 1NN\\ UK}
\email{todorov@udel.edu}
%\email{i.todorov@qub.ac.uk}

\thanks{2010 {\it  Mathematics Subject Classification.} 47L25, 46L07}

\thanks{{\it Key words and phrases:} Operator systems, Morita contexts, ternary ring of operators.}

%%%%%%%%%%%%%%%%%%%%%%%%%%%%%%%%
\begin{abstract}
We define $\Delta$-equivalence for operator systems and show that it is identical to stable isomorphism. 
We define $\Delta$-contexts and bihomomorphism contexts and show that two operator systems are $\Delta$-equivalent if and only if they can be placed in a $\Delta$-context, equivalently, in a bihomomorphism context.
We show that nuclearity for a variety of tensor products is an invariant for $\Delta$-equivalence and that function systems are $\Delta$-equivalent precisely when they are order isomorphic. 
We prove that $\Delta$-equivalent operator systems have equivalent categories of representations. 
As an application, we characterise $\Delta$-equivalence of graph operator systems in combinatorial terms.
We examine a notion of Morita embedding for operator systems, showing that mutually $\Delta$-embeddable operator systems have orthogonally complemented $\Delta$-equivalent corners when represented in the double dual of their C*-envelopes. 
\end{abstract}

\maketitle

\tableofcontents

%%%%%%%%%%%%%%%%%%%%%%%%%%%%%%%%
\section{Introduction}
%%%%%%%%%%%%%%%%%%%%%%%%%%%%%%%%

Morita equivalence was introduced in the 1950's, by K.\ Morita, and provides a rigorous framework that allows to identify rings with equivalent categories of representations. 
It replaces the notion of an isomorphism by a weaker, more flexible, identification that takes into account the variety of concrete manifestations of the abstract object rather than its precise internal structure \cite{Bas62}.

Morita equivalence was first studied in the context of non-commutative analysis by M. Rieffel \cite{Rie74jpaa}, who defined functional analytic versions of the concept for C*-algebras and von Neumann algebras. 
Following the advent of operator space theory \cite{ER00}, Morita equivalence of nonselfadjoint operator algebras was defined and examined in depth in \cite{BMP00}, where several of the fundamental \lq\lq Morita Theorems'' \cite{Bas62} were proved (see also \cite{Ble01ms, Ble01jpaa}). 
In \cite{bk}, similar developments were pursued for nonselfadjoint dual operator algebras, inspiring further work in the dual context in \cite{blecherII, bk-TAMS, bk1}. 

The idea of taking into account the selfadjoint structure, as far as it is present in any nonselfdjoint algebra, inspired the notion of TRO-equivalence \cite{Ele12}, which led to an equivalence of Morita type, called $\Delta$-equivalence, generalising Rieffel's strong Morita equivalence of C*-algebras \cite{Rie74jpaa}, and examined for operator spaces, operator algebras, and their dual counterparts in \cite{Ele08, Ele14, Ele18, Ele19, EK17, EP+, EP08, EPT10}.
It turns out, in particular, that $\Delta$-equivalence is well-suited to describe stable isomorphism of operator algebras and operator spaces (that is, the isomorphism of their amplified copies obtained by tensoring with -- depending on the setting -- either the C*-algebra of compact operators or the von Neumann algebra of all bounded linear operators on a Hilbert space). 

While Morita equivalence for nonselfadjoint operator algebras and operator spaces has reached a fairly mature stage and aspects of it are well-understood, no analogous development has taken place for operator systems -- that is, unital selfadjoint operator spaces. 
Nevertheless, there are good reasons to undertake such a development. 
Indeed, operator systems -- introduced by W. Arveson in \cite{Arv69} -- have played a cornerstone role in building quantised functional analysis \cite{ER00, Pau02}, and are currently enjoying a surge of interest, both from a purely theoretical perspective \cite{DK19, KPTT11, KPTT13, KKM21} and in applications to the area of quantum information theory, where they are studied as non-commutative graphs \cite{DSW13, {Sta16}, BTW21, TT20, bhtt}. 
Stable isomorphism of operator systems has further proved essential in recent developments in non-commutative geometry and mathematical physics \cite{CS20}. 
We note that the authors of \cite{CS20} pose explicitly the question of developing a Morita theory of operator systems, parallel to the analogous theories for C*-algebras, operator algebras and operator spaces, cf. comments prior to \cite[Proposition 2.37]{CS20}.
%Indeed, in the comments prior to \cite[Proposition 2.37]{CS20} the authors remark that ``\emph{It is an interesting open problem to develop such a notion and see how it reduces to the familiar notion of Morita equivalence.}''.

In the present paper, we address this, initiating Morita theory in the operator system category. 
Some of its aspects -- notably the characterisation of stable equivalence (Theorem \ref{T:stable}) -- readily follow from the theory, already developed for general operator spaces \cite{EK17}. 
We undertake a further step by providing an abstract characterisation of $\Delta$-equivalent operator  systems, in terms of Morita contexts, in the spirit of \cite{Bas62}, \cite{Rie74jpaa} and \cite{BMP00} (Theorem \ref{th_ac}). 
We exhibit categorical implications of $\Delta$-equivalence, showing that the natural categories of representations of $\Delta$-equivalent operator systems have to be equivalent (Theorems \ref{B0} and \ref{B02}). 
This parallels the C*-algebra case \cite{Rie74jpaa}. 
We show that $\Delta$-equivalence reduces to a (complete) order isomorphism in the case of function systems (the operator system counterparts of commutative C*-algebras) and, perhaps most intriguingly, we uncover a relation between $\Delta$-equivalence and non-commutative graph isomorphisms \cite{Sta16}. 

The paper is organised as follows. 
After collecting the necessary preliminaries and setting notation in Section \ref{s_prel}, we introduce, in Section \ref{s_Delta}, TRO-equivalent and $\Delta$-equivalent operator systems. 
In the case the operator systems are C*-algebras, the latter concept reduces to strong Morita equivalence \cite{Rie74jpaa}. 
Making full use of the existing operator space theory, we show that $\Delta$-equivalence is identical to stable isomorphism. 
The definition of $\Delta$-equivalence utilises concrete completely order isomorphic copies of the operator systems in an essential way. 
We define two entirely abstract versions of the concept: \emph{$\Delta$-contexts} are modeled on C*-algebraic imprimitivity bimodules \cite{Rie74jpaa} (see also \cite{BMP00} for the corresponding nonselfadjoint variants), while \emph{bihomomorphism contexts} are an abstraction of mutual homomorphic embeddings of non-commutative graphs
(see \cite{Sta16}). 
We show that two operator systems are $\Delta$-equivalent precisely when they can be placed in a bihomomorphism (equivalently, in a $\Delta$-) context.
The introduction of bihomomorphism contexts can be seen as a first step towards the development of an 
abstract, representation-free, view on non-commutative graph homomorphisms. 
The latter notion, introduced in \cite{Sta16}, has gained importance both as an early sample of 
non-commutative combinatorics \cite{weaver} 
and as a first step towards the quantisation of graph homomorphisms carried out in 
\cite{TT20, bhtt}.

In Section \ref{ss_ci}, we examine the categorical implications of $\Delta$-equivalence. 
We consider two natural categories of representations of an operator system, and show that if two operator systems are 
$\Delta$-equivalent then the corresponding categories of representations are equivalent via natural transformations. 

Section \ref{s_mei} contains further consequences of $\Delta$-equivalence. 
In particular, we find an operator system version of the well-known fact that strongly Morita equivalent C*-algebras have isomorphic ideal lattices. 
In the operator system category, ideals are replaced by kernels in the operator systems \cite{KPTT13} that are bimodules over the multiplier algebras \cite{blm}. 
We show that $\Delta$-equivalence preserves nuclearity with respect to several types of tensor products, including all associative ones. 
As a consequence, we show that nuclearity, exactness, the operator system local lifting property (OSLLP), the weak expectation property (WEP), and the double commutant expectation property (DCEP) are invariants for $\Delta$-equivalence.
This extends results of M. Forough and M. Amini \cite{FA18} from the category of C*-algebras to the category of operator systems.

Section \ref{s_fs} is restricted to the case of function systems. In addition to the aforementioned order isomorphism characterisation of $\Delta$-equivalence, we define the notion of a centre of an arbitrary operator system and show that $\Delta$-equivalence restricts to order isomorphism between the corresponding centres. 

Section \ref{s_NCgraphs} specifies the aforementioned connection between non-commutative graph homomorphisms and $\Delta$-equivalence. 
We show that $\Delta$-equivalence coincides with TRO-equivalence (in their irreducible inclusion representations), and also with bihomomorphisms implemented by the same TRO (Theorem \ref{t_ncg} and Proposition \ref{p_coho}, respectively).
In particular, we determine when graph operator systems are $\Delta$-equivalent in graph-theoretic terms, namely, if and only if they are pullbacks of isomorphic graphs (Corollary \ref{c_graphs}).

Finally, in Section \ref{s_emb}, we consider a Morita-type embedding for operator systems, that represents ``half'' of the $\Delta$-equivalence relations. 
This relation is transitive but not anti-symmetric up to $\Delta$-equivalence (Remark \ref{R:abelian}).
While two mutually $\Delta$-embeddable operator systems $\cl S$ and $\cl T$ are not necessarily $\Delta$-equivalent, we show that $\cl S$ and $\cl T$ possess two orthogonally complemented $\Delta$-equivalent corners as represented in the double duals of their C*-envelopes (Theorem \ref{T:anti-sym}).
Similar concepts of embeddings have been studied by G. Eleftherakis for dual operator algebras and dual operator spaces \cite{Ele18, Ele19}.
In those cases the embedding relation turns out to be a partial order when restricted to von Neumann algebras and weak stable isomorphisms \cite[Remark 4.6, Remark 4.7]{Ele19}.

%%%%%%%%%%%%%%%%%%%%%%%%%%%%%%%%
\subsection*{Acknowledgements}
%%%%%%%%%%%%%%%%%%%%%%%%%%%%%%%%

We would like to thank Mick Gielen and Walter van Suijlekom for their helpful comments.
We would like to thank Alexandros Chatzinikolaou, Gage Hoeffer, Nikolaos Koutsonikos-Kouloumpis and Ioannis Apollonas Paraskevas for their helpful comments and corrections.

Evgenios Kakariadis acknowledges support from EPSRC (grant No.\ EP/ T02576X/1).
Ivan Todorov was supported by NSF grants CCF-2115071 and DMS-2154459.
%Simons Foundation Grant 708084.
The authors acknowledge support from the Heilbronn Institute for Mathematical Research (HIMR) and the UKRI/EPSRC Additional Funding Programme for Mathematical Sciences.

%%%%%%%%%%%%%%%%%%%%%%%%%%%%%%%%
\section{Preliminaries}\label{s_prel}
%%%%%%%%%%%%%%%%%%%%%%%%%%%%%%%%

%%%%%%%%%%%%%%%%%%%%%%%%%%%%%%%%
\subsection{Operator spaces}
%%%%%%%%%%%%%%%%%%%%%%%%%%%%%%%%

By the term ``closed'' we will mean closed in the norm topology.
If $\fH$ is a linear space in a normed space and $\fX$ is a linear space that acts on $\fH$ on the left then we write $[\fX \fH]$ for the closed linear span of the set $\{x h \mid x \in \fX, h \in \fH\}$.

If $H$ and $K$ are Hilbert spaces, we write as usual $\cl B(H,K)$ for the space of all bounded linear 
operators from $H$ into $K$, and set $\cl B(H) = \cl B(H,H)$. 
An \emph{operator space} is a (not necessarily closed) subspace of $\B(H,K)$ for some Hilbert spaces $H$ and $K$, while an \emph{operator system} is an operator subspace of $\B(H)$ for some Hilbert space $H$  that contains the identity operator $I_H$ on $H$ and is closed under taking adjoints.
An \emph{operator algebra} is an operator subspace of some $\B(H)$ that is closed under multiplication, and  a \emph{C*-algebra} is a closed selfadjoint operator algebra. 
Given a subset $\fS \subseteq \cl B(H)$, we write ${\rm C}^*(\fS)$ for the unital C*-algebra, generated by $\fS$. 

We say that an operator space $\X \subseteq \B(H, K)$ \emph{acts non-degenerately} (resp. \emph{is non-degenerate}) if $[\X H] = K$ and $[\X^* K] = H$ (resp. $I_H \in [\X^* \X]$ and $I_K \in [\X \X^*]$). 
We denote by $C(\X)$ (resp. $R(\X)$) the column (resp. row) operator space with entries in $\cl X$ of sufficiently large cardinality that will be clear from the context; whenever we wish to specify that the cardinality is $k\in \bb{N}$, we write 
$C_k(\X)$ (resp. $R_k(\X)$). 
We will require also the following notion of semi-units.

%%%%%%%%%%%%%%%%%%%%%%%%%%%%%%%%
\begin{definition}\label{D:semiunit}
Let $\X \subseteq \B(H, K)$ be an operator space.
We say that the pair $((\un{x}_i)_i, (\un{y}_i)_i)$ (resp. $((\un{z}_i)_i, (\un{w}_i)_i)$) of 
nets of elements in $C(\cl X)$ (resp. $R(\cl X)$) is a \emph{semi-unit for $\X$} (resp. $\cl X^*$) if $\un{x}_i$ and $\un{y}_i$ (resp. $\un{z}_i$ and $\un{w}_i$) are finitely supported and
\[
\lim_i \un{x}_i^* \un{y}_i = I_H \ \ \ (\mbox{resp. } \lim_i \un{z}_i \un{w}_i^* = I_K).
\]
\end{definition}

The abstract versions of the notions of an operator space, an operator system and an operator algebra (with the extra assumption of a contractive approximate identity for the case of operator algebras) are rather well-known and will be used freely in the paper; we refer the reader to \cite{blm, Pau02, Pis03} for details on this, as well as for other basic concepts of operator space theory. 
An abstract operator space will be called \emph{non-degenerate} if it admits a completely isometric map onto a concrete non-degenerate operator space. For operator systems $\cl S$ and $\cl T$, we write $\cl S\simeq_{\rm c.o.i.}\cl T$ if there exists a unital complete order isomorphism from $\cl S$ onto $\cl T$. 

%%%%%%%%%%%%%%%%%%%%%%%%%%%%%%%%
\begin{remark}\label{R:nd reduct}
Let $\X \subseteq \B(H, K)$, and set $K' := [\X H]$ and $H' := [\X^* K]$.
With respect to the decompositions $H = H' \oplus (H')^{\perp}$ and $K = K' \oplus (K')^{\perp}$, one can write
\[
x = \begin{bmatrix} P_{K'} x |_{H'} & 0 \\ 0 & 0 \end{bmatrix} \foral x \in \X;
\]
moreover, the compression $P_{K'} \X |_{H'} \subseteq \B(H', K')$ of $\cl X$ acts non-degenerately.
Indeed, by definition, $[\X H'] \subseteq [\X H] \subseteq K'$.
On the other hand, let $\eta \in K'$ with $\eta \perp [\X H']$.
Then $[\X^* \eta] \perp H'$.
However $[\X^* \eta] \subseteq [\X^* K] \subseteq H'$ and so $[\X^* \eta] = \{0\}$.
Consequently, $\eta \perp [\X H] = K'$, and thus $\eta =0$.
Therefore $K' = [\X H'] = [P_{K'} \X |_{H'} (H')]$.
A similar argument applies to $H'$, and thus $P_{K'} \X |_{H'}$ acts non-degenerately.
\end{remark}

For an operator system $\S$, there exists a unital completely isometric map $\iota_{\max} \colon \S \to \cl B(H)$ so that the (so called \emph{maximal}) C*-algebra $\ca_{\max}(\S) := \ca(\iota_{\max}(\S))$ has the following (universal) property: 
for every unital completely positive map $\phi \colon \S \to \B(K)$ 
there exists a $*$-homomorphism $\wt{\phi} \colon \ca(\iota_{\max}(\S)) \to \ca(\phi(\S))$
such that $\wt{\phi}\circ \iota_{\max} = \phi$. 
The latter object was introduced by E. Kirchberg and S. Wasserman \cite{KW98}, 
and was recently studied in the larger category of 
\lq\lq non-unital operator systems'' 
(that is, matricially ordered operator spaces that admit a completely isometric complete order embedding into $\cl B(H)$ for some Hilbert space $H$) in \cite{KKM21}. 

We will denote by $\fK$ the C*-algebra of compact operators on $\ell^2$.
We will use the notation $\otimes$ for the minimal (that is, norm closed spatial) tensor product of operator spaces, and will write $\odot$ for the algebraic tensor product. 

%%%%%%%%%%%%%%%%%%%%%%%%%%%%%%%%
\subsection{Ternary rings of operators}
%%%%%%%%%%%%%%%%%%%%%%%%%%%%%%%%

A \emph{ternary ring of operators} (TRO) $\M \subseteq \B(H, K)$ is an operator subspace such that $\M \M^* \M \subseteq \M$.
If $\M$ is closed then it is an $\A$-$\B$-imprimitivity bimodule, in the sense of Rieffel \cite{Rie74jpaa}, for the C*-algebras
\[
\A = [\M \M^*] \qand \B = [\M^* \M],
\]
realising a strong Morita equivalence between $\A$ and $\B$. 
In this case, there are nets  
\[
(\underline{m}_1^\mu)_\mu \subseteq R(\M)
\qand
(\underline{m}_2^\la)_ \la \subseteq C(\M)
\]
of finitely supported strings of elements in $\M$ such that, if
\[
a_\mu := \underline{m}_1^\mu \cdot (\underline{m}_1^\mu)^* \in \A
\qand
b_\la := (\underline{m}_2^{\la})^* \cdot \underline{m}_2^{\la} \in \B,
\]
then the net $(a_\mu)_{\mu}$ (resp. $(b_\la)_{\la}$) is a contractive approximate identity for $\cl A$ (resp. $\cl B$). 
Consequently they define a left and a right semi-unit for $\M$ and $\M^*$.
Thus a closed TRO $\M$ satisfies the identity $[\M \M^* \M] = \M$.

In the case where $\A$ and $\B$ admit countable approximate identities, the corresponding nets can be replaced by sequences and can be assumed to have increasing supports.
Therefore we obtain contractions
$\underline{m}_1 \in R(\M)$
and
$\underline{m}_2 \in C(\M)$
over $\bb{N}$, such that
\[
\sum_{n=1}^{\infty} m_{1,n} m_{1,n}^* m = m
\qand
\sum_{n=1}^{\infty} m_{2,n}^* m_{2,n} m^* = m^* \ \
\foral
m \in \M;
\]
(see e.g. \cite[Lemma 2.3]{Bro77}).
We isolate the following standard fact about closed TRO's.

%%%%%%%%%%%%%%%%%%%%%%%%%%%%%%%%
\begin{remark}\label{R:ind rep}
Let $\M \subseteq \B(H, K)$ be a closed TRO, and set $\A = [\M \M^*]$ and $\B = [\M^* \M]$.
A $*$-representation $\si \colon \B \to \B(K)$ induces a completely contractive map
\[
\phi \colon \M \to \B(K, \M \otimes_\B K); \ \phi(m) \eta = m \otimes \eta,
\]
where $\M \otimes_\B K$ is the completion of the balanced algebraic tensor product $\mathcal{M} \odot_\B K$ with inner product defined by
\[
\sca{m_1 \otimes \eta_1, m_2 \otimes \eta_2} = \sca{\eta_1, \si(m_1^* m_2) \eta_2},
\ \ \ \ m_1, m_2 \in \M, \eta_1, \eta_2 \in K.
\]
Consequently, the mapping $\pi(a)$, given by $\pi(a) (m \otimes \eta) = (am) \otimes \eta$ defines a $*$-representation $\pi \colon \A \to  \B(\M \otimes_\B K)$.
In particular, $\phi$ is a TRO morphism of $\M$ such that $\phi(m_1) \phi(m_2)^* = \pi(m_1 m_2^*)$ and $\phi(m_1)^* \phi(m_2) = \si(m_1^* m_2)$ for all $m_1, m_2 \in \M$.
\end{remark}

%%%%%%%%%%%%%%%%%%%%%%%%%%%%%%%%
\subsection{TRO envelopes}
%%%%%%%%%%%%%%%%%%%%%%%%%%%%%%%%

Let $\X$ be an operator space.
For a completely isometric map $i \colon \X \to \M$ into a TRO $\M$, let $\T(i(\X))$ be the TRO densely spanned by the products
\[
i(x_1) i(x_2)^* i(x_3) i(x_4)^* \cdots i(x_{2n})^* i(x_{2n+1})
\qfor
n\geq 0, x_1,\dots, x_{2n+1} \in \X;
\]
we say that $(\T(i(\X)),i)$ is \emph{a TRO extension of $\X$}.
There is a particular embedding of $\X$ in the injective envelope $\I(\S(\X))$ of a canonical operator system $\S(\X)$, associated with $\cl X$. 
If $\X$ is unital, we set $\S(\X) = \X + \X^*$, and if not, we set
\[
\S(\X) = \left\{ \begin{bmatrix} \la & x_1 \\ x_2 & \mu \end{bmatrix} \mid x_1, x_2 \in \X, \la, \mu \in \bC\right\}
\]
(the latter object is called the \emph{Paulsen operator system} of $\cl X$).
Writing 
\begin{equation}\label{eq_I12}
\I(\S(\X)) = \begin{bmatrix} \I_{11}(\X) & \I_{12}(\X) \\ \I_{21}(\X)^* & \I_{22}(\X) \end{bmatrix}
\end{equation}
in a matrix form, the injective envelope $\I(\X)$ of $\X$ is defined to be the corner $\I_{12}(\X)$; in particular, it coincides with $\I(\X + \X^*)$ when $\X$ is unital.
The TRO extension of $\X$ inside $\I(\cl S(\X))$ will be denoted by $\tenv(\X)$.
M. Hamana \cite{Ham99} showed that $\tenv(\X)$ possesses the following universal property: given any TRO extension $(\M, j)$ of $\X$ there exists a (necessarily unique) surjective triple morphism $\theta \colon \M \to \tenv(\X)$ such that $\theta(j(x)) = x$, for all $x\in \cl X$.
The operator space $\tenv(\X)$ is called \emph{the ternary} (or \emph{TRO}) \emph{envelope} of $\X$.

If $\X$ is unital then the embedding $\X \hookrightarrow \I(\X)$ is unital and the well-known Choi-Effros construction \cite{CE77} endows $\I(\X)$ with a C*-algebraic structure; thus, in this case, the TRO envelope is a C*-algebra, denoted by $\cenv(\X)$.
The existence of the C*-envelope was also established by M. Hamana \cite{Ham79}, following Arveson's quantisation program \cite{Arv69}.
For an alternative proof of Hamana's Theorem, using boundary subsystems, the reader is directed to \cite{Kak11-2}.
In fact, the arguments in \cite{Kak11-2} suffice to show the existence of the TRO envelope as well, even though this is not mentioned therein.

By \cite[Corollary 4.6.12]{blm}, if $\I(\X)$ is an injective envelope of an operator space $\X$ then $\I(\X) \otimes \B(\ell^2)$ is an injective envelope of $\X \otimes \fK$.
Therefore, if $\X$ is unital or an operator algebra then 
\begin{equation}\label{eq_fK}
\tenv(\X \otimes \fK) \simeq \tenv(\X) \otimes \fK \simeq \cenv(\X) \otimes \fK.
\end{equation}
In particular $\tenv(\X \otimes \fK)$ is a C*-algebra and has the co-universal property of the C*-envelope, in the sense that if $\phi \colon \X \otimes \fK \to \B(H)$ is a completely isometric map then there is a $*$-epimorphism $\ca(\phi(\X \otimes \fK)) \to \cenv(\X) \otimes \fK$ fixing $\X \otimes \fK$.
Another proof is provided in \cite[Proposition 2.37]{CS20}.

There is an alternative way of obtaining the C*-envelope of a unital operator space $\X$, described for operator algebras by M. Dritschel and S. McCullough \cite{DM05}, and simplified for operator systems by W. Arveson \cite{Arv08}.
A \emph{dilation} $\phi' \colon \X \to \B(K)$ of a unital completely contractive map $\phi \colon \X \to \B(H)$ is a completely contractive map with $H \subseteq K$ such that $\phi(x) = P_H \phi'(x)|_H$ for every $x \in \X$.
The map $\phi$ is called \emph{maximal} if it only admits trivial dilations.
Then the C*-envelope is (canonically) $*$-isomorphic to the C*-algebra generated by the image of a maximal unital completely isometric map.

It is not always true that every $*$-representation $\pi \colon \cenv(\X) \to \B(H)$ is the unique extension of $\pi|_{\X}$.
(An example can be constructed for the nonselfajoint operator algebra generated by the Cuntz isometries $\{s_n\}_{n \in \bN}$ of $\O_\infty$; see \cite{DS18, KR20} for more examples.)
An operator space $\X$ is called \emph{hyperrigid} if whenever $\pi$ is a $*$-representation of $\cenv(\X)$, its restriction $\pi|_\X$ to $\X$ is a maximal map. 
A key fact in these considerations is that the maximal maps completely norm $\X$.
Realising $\X \subseteq \B(H)$, maximal maps have the property that they extend uniquely to a completely positive map on $\ca(\X)$ that is a $*$-representation.
Initially Arveson considered maps with the additional property that the extension is irreducible.
We will refer to those as \emph{Choquet} representations.
Arveson proved that the Choquet representations completely norm a separable operator system; 
K. Davidson and M. Kennedy \cite{DK19} established the latter fact in full generality.

%%%%%%%%%%%%%%%%%%%%%%%%%%%%%%%%
\subsection{Multipliers of operator spaces}
%%%%%%%%%%%%%%%%%%%%%%%%%%%%%%%%

For a linear space $\cl V$, we often use the notation $\mbox{}_{\cl A}\cl V_{\cl B}$ to designate the fact that $\cl V$ is a left $\cl A$-module and a right $\cl B$-module. 
Employing the completely isometric inclusions
\[
\X \hookrightarrow \S(\X) \hookrightarrow \I(\S(\X))
\]
and resorting to the decomposition (\ref{eq_I12}) of $\I(\S(\X))$, we write $\M_l(\X)$ for 
\emph{the operator space of left multipliers}
of $\X$ \cite[Section 4.5]{blm}.
Thus, as an operator algebra, $\M_l(\X)$ is completely isometrically isomorphic to
\[
\I\M_l(\X) := \{a \in \I_{11}(\X) \mid a\X \subseteq \X\}.
\]
The diagonal of $\M_l(\X)$ is denoted by $\A_l(\X)$ and is a unital C*-algebra.
In particular,
\[
\A_l(\X) \simeq \{a \in \I_{11}(\X) \mid a\X \subseteq \X \text{ and } a^* \X \subseteq \X \}
\]
via a $*$-isomorphism.
Similar facts hold for the operator space $\M_r(\X)$ of the right multipliers and its diagonal $\A_r(\X)$.
The maps
\[
\A_l(\X) \times \X \to \X; \ (a, x) \mapsto ax 
\qand 
\X \times \A_r(\X) \to \X; \ (x, b) \mapsto xb
\]
are completely contractive and $\X$ is an operator $\A_l(\X)$-$\A_r(\X)$-bimodule \cite[Parargaph 4.6.6]{blm}.
The C*-bimodule ${}_{\A_l(\X)} \X_{\A_r(\X)}$ is non-degenerate and $\A_l(\X)$ and $\A_r(\X)$ act faithfully on $\X$.
If $\S$ is an operator system, we moreover have that
\[
\A_l(\S) = \A_r(\S) = \{a \in \I(\S) \mid a \S \subseteq \S \text{ and } a^* \S \subseteq \S\};
\]
we write $\A_\S$ for the latter C*-algebra.
Letting $\iota_{\env} \colon \S \to \cenv(\S) \subseteq \I(\S)$ be the canonical embedding, we note that the unitality condition yields that $\A_\S \subseteq \iota_{\env}(\S)$; we can thus consider $\cl A_{\cl S}$ as being contained in $\cl S$. 

Recall that, if $\cl A$ is a unital C*-algebra, an operator system $\cl S$ is called an \emph{operator $\cl A$-system} if $\cl S$ is a C*-bimodule over $\cl A$ and 
\[
A^*\cdot M_m(\cl S)^+ \cdot A\subseteq M_n(\cl S)^+ \ \mbox{ whenever } A\in M_{m,n}(\cl A).
\]
We refer the reader to \cite[Section 15]{Pau02} for further details on operator C*-systems and note that an operator system $\cl S$ is automatically an operator $\cl A_{\cl S}$-system. 

%%%%%%%%%%%%%%%%%%%%%%%%%%%%%%%%
\subsection{Morita equivalence for operator spaces}
%%%%%%%%%%%%%%%%%%%%%%%%%%%%%%%%

Following \cite{Ele08, Ele12, Ele14, EP08, EPT10}, G. Eleftherakis and E. Kakariadis \cite{EK17}, and more recently G. Eleftherakis and E. Papapetros \cite{EP+}, examined the notion of Morita equivalence for closed operator spaces.
We say that the closed operator spaces $\X \subseteq \B(H_1, H_2)$ and $\Y \subseteq \B(K_1, K_2)$ are \emph{TRO-equivalent} if there exist TRO's $\M_1 \subseteq \B(H_1, K_1)$ and $\M_2 \subseteq \B(H_2, K_2)$ such that
\[
\X = [\M_2^* \Y \M_1]
\qand
\Y = [\M_2 \X \M_1^*];
\]
in this case, we say that $\cl X$ and $\cl Y$ are TRO-equivalent \emph{via} $\cl M_1$ \emph{and} $\cl M_2$. 
We say that $\cl X$ and $\cl Y$ are \emph{$\Delta$-equivalent} if there exist completely isometric maps $\phi \colon \X \to \B(H_1, H_2)$ and $\psi \colon \Y \to \B(K_1, K_2)$ such that $\phi(\X)$ and $\psi(\Y)$ are TRO-equivalent.
It was shown in \cite{EK17} that TRO equivalence is an equivalence relation (the adverb ``strongly'', used in \cite{EK17} when referring to these equivalences, is dropped herein).
The operator spaces $\cl X$ and $\cl Y$ are called \emph{stably isomorphic} if $\cl X\otimes\fK\simeq \cl Y\otimes\fK$ via a complete isometry. 

We collect some results from \cite{EK17} which will be used subsequently:
\begin{enumerate}
\item Stably isomorphic closed operator spaces are $\Delta$-equivalent \cite[Theorem 4.3]{EK17}.
\item If two closed operator spaces $\X$ and $\Y$ are TRO-equivalent via $\M_1$ and $\M_2$ and the 
C*-algebras $[\M_1^* \M_1]$, $[\M_1 \M_1^*]$, $[\M_2^* \M_2]$ and $[\M_2 \M_2^*]$ have countable approximate identities then $\X$ and $\Y$ are stably isomorphic \cite[Corollary 4.7]{EK17}.
\item Separable closed operator spaces are $\Delta$-equivalent if and only if they are stably isomorphic
\cite[Corollary 4.8]{EK17}.
\item $\Delta$-equivalent closed operator spaces have $\Delta$-equivalent TRO envelopes \cite[Theorem 5.10]{EK17}.
\end{enumerate}

For future reference, we note that the proof of item (iv) proceeds as follows.
If $\X$ and $\Y$ are $\Delta$-equivalent then we can choose $\phi$ so that $\phi(\X) \subseteq \tenv(\X)$ and there exists a completely isometric map $\psi$ of $\Y$ such that $\psi(\Y)$ is TRO-equivalent to $\phi(\X)$ and that $\T(\psi(\Y))$ is TRO-isomorphic to $\tenv(\Y)$.
We also isolate the following remark from \cite{EK17} for further use.

%%%%%%%%%%%%%%%%%%%%%%%%%%%%%%%%
\begin{remark}\label{R:m.a.i.}
Suppose that $\X\subseteq \B(H_1, H_2)$ and $\Y\subseteq \B(K_1, K_2)$ are operator spaces that are 
TRO-equivalent via $\M_1 \subseteq \B(H_1, K_1)$ and $\M_2 \subseteq \B(H_2, K_2)$, and fix the column contractions
\[
(\underline{m}_1^\mu)_\mu \subseteq C(\M_1)
\qand
(\underline{m}_2^\la)_\la \subseteq C(\M_2),
\]
so that the nets with elements
\[
a_\mu := (\underline{m}_1^\mu)^* \cdot \underline{m}_1^\mu
\qand
b_\la := (\underline{m}_2^\la)^* \cdot \underline{m}_2^\la
\]
are contractive approximate identities for $[\M_1^* \M_1]$ and $[\M_2^* \M_2]$, respectively.
If $k_\mu$ (resp. $k_\la$) is the support of $(\underline{m}_1^\mu)_\mu$ (resp. $(\underline{m}_2^\la)_\la$) 
then the maps 
\[
\phi_{\la, \mu} \colon \B(H_1, H_2) \to M_{k_\la, k_\mu}(\B(K_1, K_2)) ; \ x \mapsto \underline{m}_2^\la \cdot x \cdot (\underline{m}_1^\mu)^*
\]
and
\[
\psi_{\la, \mu} \colon M_{k_\la, k_\mu}(\B(K_1, K_2)) \to \B(H_1, H_2) ; \ [y_{i,j}] \mapsto (\underline{m}_2^\la)^* \cdot [y_{i,j}] \cdot \underline{m}_1^\mu
\]
are completely contractive, and satisfy
\[
(\psi_{\la, \mu} \circ \phi_{\la, \mu})(x) =  b_\la \cdot x \cdot a_\mu \ \foral x \in \X.
\]
Therefore $\left((\psi_{\la, \mu} \circ \phi_{\la, \mu})|_\X\right)_{\la,\mu}$ 
converges norm-pointwise to $\id_\X$, that is, the diagram
\[
\xymatrix@C=1.5cm{
\X \ar[dr]_{\phi_{\la, \mu}} \ar[rr]^{\id_\X} & & \X \\
& M_{k_{\la}, k_{\mu}}(\Y) \ar[ur]_{\psi_{\la, \mu}} &
}
\]
approximately commutes. 
If $\X$ and $\Y$ are unital then we may assume that the nets are sequences and that the row contractions are supported on the same entries $k_n$, for every $n \in \bN$.
If $\X$ is an $\A$-$\B$-bimodule over C*-algebras $\A \subseteq [\M_1^* \M_1]$ and $\B \subseteq [\M_2^* \M_2]$ then this scheme applies to $\id_\A$ and $\id_\B$, so that the triple $(\id_{\cl B},\id_{\cl X},\id_{\cl A})$ is approximated 
in the same fashion. 
\end{remark}

%%%%%%%%%%%%%%%%%%%%%%%%%%%%%%%%
\section{$\Delta$-equivalence and its characterisations}\label{s_Delta}
%%%%%%%%%%%%%%%%%%%%%%%%%%%%%%%%

%%%%%%%%%%%%%%%%%%%%%%%%%%%%%%%%
\subsection{TRO-equivalence}
%%%%%%%%%%%%%%%%%%%%%%%%%%%%%%%%

We start by considering concrete operator systems (that may not be complete).
We first define TRO-equivalence in this context and make some preparatory remarks.
We fix Hilbert spaces $H$ and $K$. 

%%%%%%%%%%%%%%%%%%%%%%%%%%%%%%%%
\begin{definition}\label{D:TRO}
Two concrete operator systems $\cl S\subseteq \cl B(H)$ and $\cl T\subseteq \cl B(K)$ are called \emph{TRO-equivalent} (denoted $\cl S\sim_{\rm TRO}\cl T$), if there exists a non-degenerate TRO $\cl M\subseteq \cl B(H,K)$ such that 
\begin{equation}\label{eq_TROeq}
\cl M^*\cl T \cl M \subseteq \ol{\cl S}
\qand 
\cl M \cl S \cl M^* \subseteq \ol{\cl T}.
\end{equation}
\end{definition}

If (\ref{eq_TROeq}) holds true, we say that $\cl S$ and $\cl T$ are TRO-equivalent \emph{via} $\cl M$.
It is clear that, in this case, $\M^* \M \subseteq \ol{\S}$ and $\M \M^* \subseteq \ol{\T}$.
Furthermore,
\[
\M \M^* \T \M \M^* \subseteq \M \ol{\S} \M^* \subseteq \ol{\T},
\]
and likewise for $\S$.
Since $\M$ is non-degenerate we have that $1_\S \in [\M^* \M]$ and $1_\T \in [\M \M^*]$, and therefore we obtain the bimodule conditions
\[
\M \M^* \T \cup \T \M \M^* \subseteq \ol{\T} \qand \M^* \M \S \cup \S \M^* \M \subseteq \ol{\S}.
\]

%%%%%%%%%%%%%%%%%%%%%%%%%%%%%%%%
\begin{remark} \label{R:unit}
The requirement that $[\M^* \M]$ and $[\M \M^*]$ be unital is a necessary part of TRO-equivalence.
This is apparent, in particular, when the operator systems are complete.
In general, one can start with a TRO $\M$ so that $[\M^* \M]$ and $[\M \M^*]$ act non-degenerately on the complete operator systems.
Then \cite[Lemma 4.9]{EK17} implies that $[\M^* \M]$ and $[\M \M^*]$ have to be necessarily unital.
\end{remark}

The following proposition allows a reduction to complete operator systems.

%%%%%%%%%%%%%%%%%%%%%%%%%%%%%%%%
\begin{proposition}\label{P:complete}
The concrete operator systems $\S\subseteq \cl B(H)$ and $\T \subseteq \cl B(K)$ are TRO-equivalent if and only if there exists a TRO $\M \subseteq \B(H, K)$ such that $[\M^* \T \M] = \ol{\S}$ and $[\M \S \M^*] = \ol{\T}$.
\end{proposition}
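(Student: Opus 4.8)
The plan is to prove both implications with the \emph{same} TRO, reducing everything to routine manipulations of closed linear spans.

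For the direction ``$\cl S\sim_{\rm TRO}\cl T$ $\Rightarrow$ the stated equalities hold'', I would show that a non-degenerate TRO $\M$ implementing the equivalence already satisfies $[\M^*\cl T\M]=\overline{\cl S}$ and $[\M\cl S\M^*]=\overline{\cl T}$; only the inclusions ``$\supseteq$'' need an argument. Fix $s\in\cl S$. Non-degeneracy of $\M$ gives $I_H=1_{\cl S}\in[\M^*\M]$, so there are finite sums $u_\gamma=\sum_i m_i^*m_i'$ (with $m_i,m_i'\in\M$) tending in norm to $I_H$; then $u_\gamma^*$ is again such a finite sum and $u_\gamma\,s\,u_\gamma^*\to s$ in norm. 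Moreover
\[
u_\gamma\,s\,u_\gamma^*=\sum_{i,j}m_i^*\bigl(m_i'\,s\,(m_j')^*\bigr)m_j\in[\M^*(\M\cl S\M^*)\M]\subseteq[\M^*\overline{\cl T}\M]\subseteq[\M^*\cl T\M],
\]
the last inclusion holding because $\M^*\overline{\cl T}\M\subseteq\overline{\M^*\cl T\M}$ by joint norm-continuity of the product. Letting $\gamma$ run gives $s\in[\M^*\cl T\M]$, hence $\overline{\cl S}\subseteq[\M^*\cl T\M]\subseteq\overline{\cl S}$; the second equality is obtained symmetrically.

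For the converse, let $\M\subseteq\B(H,K)$ be a TRO (a priori neither closed nor non-degenerate) with $[\M^*\cl T\M]=\overline{\cl S}$ and $[\M\cl S\M^*]=\overline{\cl T}$. Passing to $\overline{\M}$ leaves both hypotheses unchanged, so I assume $\M$ closed and set $\cl A=[\M\M^*]$, $\cl B=[\M^*\M]$. First, $\M$ acts non-degenerately: if $P\in\B(H)$ is the orthogonal projection onto $[\M^*K]$ then $mP=m$ for every $m\in\M$, so $\M^*\cl T\M\subseteq P\,\B(H)\,P$, whence $\overline{\cl S}\subseteq P\,\B(H)\,P$; since $I_H\in\overline{\cl S}$ this forces $P=I_H$, i.e.\ $[\M^*K]=H$, and symmetrically $[\M H]=K$ (using $I_K\in\overline{\cl T}$). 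Hence $\cl A$ and $\cl B$ act non-degenerately on $K$ and $H$. Second, I would upgrade this to $I_H\in\cl B$ and $I_K\in\cl A$: one has $\cl B=[\M^*\M]\subseteq[\M^*\cl T\M]=\overline{\cl S}$ (taking the middle factor $I_K\in\cl T$), a short computation using $\M=[\M\M^*\M]$ and the two hypotheses gives the bimodule identity $\overline{\cl S}=[\cl B\,\overline{\cl S}\,\cl B]$, and then a standard $\varepsilon/3$-argument with a contractive approximate unit $(e_\lambda)$ of $\cl B$ shows $e_\lambda s e_\lambda\to s$ in norm for all $s\in\overline{\cl S}$; applied to $s=I_H$ this yields $e_\lambda^2\to I_H$ in norm with $e_\lambda^2\in\cl B$, so $I_H\in\cl B$, and symmetrically $I_K\in\cl A$. (Alternatively, once $\cl B$ is known to act non-degenerately one may simply invoke \cite[Lemma 4.9]{EK17}, as in Remark \ref{R:unit}, to conclude that it is unital with unit $I_H$.) Thus $\M$ is a non-degenerate TRO, and the equalities trivially give the inclusions (\ref{eq_TROeq}), so $\cl S\sim_{\rm TRO}\cl T$.

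The one genuinely delicate point is the last step of the converse: the passage from ``$\cl A$ and $\cl B$ act non-degenerately'' to ``$I_K\in\cl A$ and $I_H\in\cl B$''. This fails for general non-degenerately acting C*-algebras and really exploits that $\overline{\cl S}$ and $\overline{\cl T}$ are \emph{unital} operator systems, through the bimodule identity $\overline{\cl S}=[\cl B\,\overline{\cl S}\,\cl B]$ (equivalently, through \cite[Lemma 4.9]{EK17}). Everything else is bookkeeping with closed spans and continuity of the multiplication.
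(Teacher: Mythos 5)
Your proof is correct and follows essentially the same route as the paper: the forward direction is the standard span manipulation using $I_H\in[\M^*\M]$, and the converse hinges on the same key mechanism, namely that a contractive approximate unit of $[\M^*\M]$ acts as an approximate identity on $\ol{\S}=[\M^*\T\M]$ and hence converges in norm to $I_H\in\ol{\S}$. The paper's version is just slightly leaner (it uses a one-sided approximate identity $a_\mu s\to s$ rather than your two-sided $e_\lambda s e_\lambda\to s$, and skips the preliminary non-degenerate-action step, which is indeed not needed for the main argument).
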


\begin{proof}
Suppose that ${\S}$ and ${\T}$ are TRO equivalent via a non-degenerate $\M$; thus, $1_\S = I_H \in [\M^* \M]$ and $1_\T = I_K \in [\M \M^*]$.
The inclusions
\[
\S \subseteq [\M^* \M \S \M^* \M] \subseteq [\M^* \T \M] \subseteq \ol{\S},
\qand
\T \subseteq [\M \M^* \T \M \M^*] \subseteq [\M \S \M^*] \subseteq \ol{\T},
\]
complete one direction.
Conversely, suppose that $\M \subseteq \B(H, K)$ is a TRO such that 
\[
[\M^* \T \M] = \ol{\S} \qand [\M \S \M^*] = \ol{\T}.
\]
We will show that $\M$ is non-degenerate.
To this end, let $(\un{m}^\mu)_{\mu} \subseteq C(\M)$ be a net so that the net $(a_\mu)_\mu \subseteq [\M^* \M]$ with elements $a_\mu := (\un{m}^\mu)^* \un{m}^\mu$ is a contractive approximate identity of the C*-algebra $[\M^* \M]$.
By definition, 
\[
\lim_\mu a_\mu \cdot s = s  \foral s \in \M^* \T \M
\]
in the norm topology, and so by passing to the norm closure this holds for all $s \in \ol{\S} = [\M^* \T \M]$.
In particular, this holds for $s = 1_\S$, showing that $1_\S \in [\M^* \M]$.
Likewise, $1_\T \in [\M \M^*]$ and the proof is complete.
\end{proof}

The condition that $\M$ is a non-degenerate TRO can be further relaxed by using instead non-degenerate operator spaces.

%%%%%%%%%%%%%%%%%%%%%%%%%%%%%%%%
\begin{proposition}\label{P:tro os}
Let $\S \subseteq \B(H)$ and $\T \subseteq \B(K)$ be operator systems and $\X \subseteq \B(H, K)$ be a non-degenerate operator space such that
\[
\X^* \T \X \subseteq \ol{\S}
\qand
\X \S \X^* \subseteq \ol{\T}.
\] 
Let $\A := \ca(\X^* \X)$ and $\M := [\X \A]$. 
Then $\M$ is a TRO, and $\S$ and $\T$ are TRO-equivalent via $\cl M$. 
\end{proposition}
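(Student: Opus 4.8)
The plan is to verify in turn that $\M := [\X\A]$ is a TRO, that $\M$ is non-degenerate, and finally that the inclusions $\M^*\T\M\subseteq\ol\S$ and $\M\S\M^*\subseteq\ol\T$ hold. First I would observe that $\A=\ca(\X^*\X)\subseteq\B(H)$ is a unital C*-algebra (the unit being $I_H$, since $\X$ is non-degenerate). To see that $\M$ is a TRO, compute $\M\M^*\M=[\X\A][\X\A]^*[\X\A]=[\X\A\A^*\X^*\X\A]=[\X\A\,\X^*\X\,\A]$; using that $\A$ is a C*-algebra containing $\X^*\X$ and is norm-closed, the middle factor $\A\X^*\X\A$ lies in $\A$, so $\M\M^*\M\subseteq[\X\A]=\M$. (One should be a little careful: $\A X^*X A\subseteq A$ holds because $X^*X\subseteq A$ and $A$ is multiplicatively closed and closed; then $[\X(\A X^*X\A)]\subseteq[\X\A]$.) Hence $\M$ is a closed TRO.

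Next I would show $\M$ is non-degenerate, i.e. $I_H\in[\M^*\M]$ and $I_K\in[\M\M^*]$. For the first, note $[\M^*\M]=[\A\X^*\X\A]$; since $\X^*\X\subseteq\A$ and $\A$ is generated as a C*-algebra by $\X^*\X$, the closed span $[\A\X^*\X\A]$ is a closed two-sided ideal of $\A$ containing $\X^*\X$, hence equals $\A$ (as $\A$ is the C*-algebra generated by $\X^*\X$, any closed ideal containing $\X^*\X$ is all of $\A$). In particular $I_H\in\A=[\M^*\M]$. For the second, $[\M\M^*]=[\X\A\A\X^*]=[\X\A\X^*]$; since $I_H\in\A$ we get $[\X\X^*]\subseteq[\X\A\X^*]$, and because $\X$ is non-degenerate $I_K\in[\X\X^*]\subseteq[\M\M^*]$.

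Finally I would establish the two TRO-equivalence inclusions. For $\M^*\T\M\subseteq\ol\S$: $\M^*\T\M=[\A\X^*\,\T\,\X\A]\subseteq[\A\,\ol\S\,\A]$ by the hypothesis $\X^*\T\X\subseteq\ol\S$. Now $\ol\S$ is an operator system which, by the discussion following Definition~\ref{D:TRO} applied in the completed setting — or more directly, because $\ol\S$ is a $\ca(\X^*\X)$-bimodule: indeed $\X^*\X\,\ol\S=\X^*(\X\ol\S\X^*)\X\subseteq\X^*\ol\T\X\subseteq\ol\S$ and symmetrically $\ol\S\,\X^*\X\subseteq\ol\S$, so $\ol\S$ is a bimodule over the C*-algebra generated by $\X^*\X$, namely $\A$ — we get $[\A\,\ol\S\,\A]\subseteq\ol\S$. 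Hence $\M^*\T\M\subseteq\ol\S$. The reverse-side inclusion $\M\S\M^*\subseteq\ol\T$ is symmetric: $\M\S\M^*=[\X\A\,\S\,\A\X^*]$, and since $\ol\S$ is an $\A$-bimodule, $\A\S\A\subseteq[\A\,\ol\S\,\A]\subseteq\ol\S$, so $\M\S\M^*\subseteq[\X\,\ol\S\,\X^*]\subseteq\ol\T$. Therefore $\M$ witnesses $\S\sim_{\rm TRO}\T$.

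The main obstacle I anticipate is the bookkeeping around closures and the claim that $[\A\X^*\X\A]=\A$: one must argue carefully that the closed linear span $[\X^*\X]$ need not itself be an ideal, but that the closed span $[\A\X^*\X\A]$ — equivalently the closed ideal generated by $\X^*\X$ in $\A$ — is all of $\A$ precisely because $\A$ is \emph{generated} as a C*-algebra by $\X^*\X$. (A clean way: the hereditary C*-subalgebra / ideal generated by $\X^*\X$ contains all products $x_1^*x_2 x_3^* x_4$ etc., hence a dense $*$-subalgebra of $\A$, hence equals $\A$.) The rest is routine module gymnastics with $[\,\cdot\,]$, using repeatedly that $\A$ is norm-closed and multiplicatively closed and that $I_H\in\A$, $I_K\in[\X\X^*]$.
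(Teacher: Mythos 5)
Your construction and overall route are the same as the paper's: form $\A=\ca(\X^*\X)$, set $\M=[\X\A]$, check the TRO property from $\X^*\X\subseteq\A$, get non-degeneracy from $I_H\in[\X^*\X]\subseteq\A$ and $I_K\in[\X\X^*]\subseteq[\X\A\X^*]$, and reduce the two equivalence inclusions to the statement that $\ol{\S}$ is an $\A$-bimodule. (Your ideal-theoretic detour for $[\A\X^*\X\A]=\A$ is unnecessary: since $I_H\in\A$ and $I_H\in[\X^*\X]$ by non-degeneracy, one has directly $I_H\in[\X^*\X]\subseteq[\A\X^*\X\A]=[\M^*\M]$; note also that, as stated, ``any closed ideal containing $\X^*\X$ is all of $\A$'' is only true here because non-degeneracy puts the unit in $[\X^*\X]$.)

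The one step that is wrong as written is the identity $\X^*\X\,\ol{\S}=\X^*(\X\ol{\S}\X^*)\X$: the left-hand side consists of products $x_1^*x_2s$, the right-hand side of products $x_1^*x_2sx_3^*x_4$, and these are genuinely different sets, so the displayed equality does not hold. The conclusion you want ($\X^*\X\,\ol{\S}\subseteq\ol{\S}$, and hence the $\A$-bimodule property) is correct, but it needs the non-degeneracy of $\X$ on the $H$-side: since $I_H\in[\X^*\X]$, for $s\in\ol{\S}$ one writes $x_1^*x_2s=\lim_n x_1^*x_2su_n$ with $u_n\in\spn(\X^*\X)$, $u_n\to I_H$, so that $\X^*\X\,\ol{\S}\subseteq[\X^*\X\,\ol{\S}\,\X^*\X]\subseteq[\X^*\ol{\T}\X]\subseteq\ol{\S}$; then iteration and norm-density give $\A\,\ol{\S}\subseteq\ol{\S}$, and adjoints give the right module action. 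This repaired argument is exactly the paper's step ``$\S\P\subseteq\ol{\S}$ since $\P=[\X^*\X]$ is unital,'' followed by its induction on powers of $\P$; with that correction your proof matches the paper's.
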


\begin{proof}
Let $\P = [\X^*\X]$ and $\Q = [\X\X^*]$. 
Since $\X$ is assumed non-degenerate, $\P$ and $\Q$ contain the corresponding identities, and thus they are operator systems.
Moreover, 
\[
\P \S \P \subseteq [\X^*\X \S \X^*\X] \subseteq [\X^* \T \X] \subseteq \ol{\S};
\]
similarly, $\Q \T \Q \subseteq \T$. 
We have that $\P^2\subseteq \ol{\S}$, and that $\S \P \subseteq \ol{\S}$, since $\P$ is unital. 
Assuming $\P^n\subseteq \ol{\S}$ for some $n$, we have $\P^{n+1} \subseteq \ol{\S} \P \subseteq \ol{\S}$.
Noting that $\A = \ca(\P)$, we have 
$\S \cdot \A \subseteq \ol{\S}$ and, by taking adjoints, we conclude that 
$\ol{\S}$ is an $\cl A$-bimodule. 
Letting $\M = [\X\A]$, we have 
\[
\M\M^* \M \subseteq [\X\A \X^* \X\A]
\subseteq [\X\A \P \A] \subseteq [\X \A] = \M,
\]
that is, $\M$ is a (closed) TRO. 
Since $\X$ is non-degenerate and $\A$ is unital, $\M$ is non-degenerate. 
In addition, 
\[
\M^*\T \M \subseteq [\A \X^*\T \X \A] \subseteq [\A \S \A] \subseteq \ol{\S},
\]
and, similarly, 
\[
\M \S \M^* \subseteq [\X \A\S \A \X^*] \subseteq [\X \S \X^*] \subseteq \ol{\T}.
\]
Thus, $\S\sim_{\rm TRO}\T$ via $\M$ and the proof is complete.
\end{proof}

%%%%%%%%%%%%%%%%%%%%%%%%%%%%%%%%
\begin{proposition}\label{P:transitive}
TRO-equivalence of (concrete) operator systems is an equivalence relation.
\end{proposition}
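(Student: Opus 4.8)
The plan is to verify reflexivity, symmetry and transitivity, the first two being immediate and transitivity carrying all of the content. For reflexivity, take $\cl M:=\bC I_H\subseteq\cl B(H,H)$: it is a non-degenerate TRO, since $[\cl M^*\cl M]=[\cl M\cl M^*]=\bC I_H\ni I_H$, and $\cl M^*\cl S\cl M=\cl S=\cl M\cl S\cl M^*$, so $\cl S\sim_{\rm TRO}\cl S$. For symmetry, if $\cl S\subseteq\cl B(H)$ and $\cl T\subseteq\cl B(K)$ are TRO-equivalent via $\cl M\subseteq\cl B(H,K)$, then $\cl M^*\subseteq\cl B(K,H)$ is a TRO which is non-degenerate precisely because $\cl M$ is (the conditions $I_H\in[\cl M^*\cl M]$ and $I_K\in[\cl M\cl M^*]$ are invariant under $\cl M\mapsto\cl M^*$), and the inclusions $\cl M\cl S\cl M^*\subseteq\ol{\cl T}$, $\cl M^*\cl T\cl M\subseteq\ol{\cl S}$ of Definition \ref{D:TRO} are exactly those witnessing $\cl T\sim_{\rm TRO}\cl S$ via $\cl M^*$.

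For transitivity, suppose $\cl S\subseteq\cl B(H)$, $\cl T\subseteq\cl B(K)$ and $\cl R\subseteq\cl B(L)$ satisfy $\cl S\sim_{\rm TRO}\cl T$ via $\cl M\subseteq\cl B(H,K)$ and $\cl T\sim_{\rm TRO}\cl R$ via $\cl N\subseteq\cl B(K,L)$. I would set $\cl P:=[\cl N\cl M]\subseteq\cl B(H,L)$ and apply Proposition \ref{P:tro os} with $\cl X=\cl P$ and $\cl R$ in place of the operator system $\cl T$ there. Three points need checking. (1) $\cl P$ is a non-degenerate operator space: it is a closed subspace by construction, and since $\cl M$ and $\cl N$ are non-degenerate we have $I_K\in[\cl N^*\cl N]$ and $I_K\in[\cl M\cl M^*]$, whence $[\cl P^*\cl P]=[\cl M^*\cl N^*\cl N\cl M]\supseteq[\cl M^*\cl M]\ni I_H$ and, symmetrically, $I_L\in[\cl P\cl P^*]$. (2) $\cl P^*\cl R\cl P\subseteq\ol{\cl S}$: using $[\cl N^*\cl R\cl N]\subseteq\ol{\cl T}$ (from $\cl T\sim_{\rm TRO}\cl R$ via $\cl N$) together with continuity of multiplication by the fixed strings of $\cl M$, we get $[\cl P^*\cl R\cl P]=[\cl M^*\cl N^*\cl R\cl N\cl M]\subseteq[\cl M^*\cl T\cl M]\subseteq\ol{\cl S}$. (3) Symmetrically, $[\cl P\cl S\cl P^*]=[\cl N\cl M\cl S\cl M^*\cl N^*]\subseteq[\cl N\cl T\cl N^*]\subseteq\ol{\cl R}$. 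Proposition \ref{P:tro os} then produces the TRO $\cl M':=[\cl P\,\ca(\cl P^*\cl P)]$ witnessing $\cl S\sim_{\rm TRO}\cl R$, which completes the argument.

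The only subtle point — and the reason we route through Proposition \ref{P:tro os} rather than taking $\cl P$ itself as the implementing TRO — is that $\cl P=[\cl N\cl M]$ need not be a TRO: one has $[\cl P\cl P^*\cl P]\subseteq[\cl N\,(\cl M\cl M^*)(\cl N^*\cl N)\,\cl M]$ with $(\cl M\cl M^*)(\cl N^*\cl N)\subseteq\ol{\cl T}\cdot\ol{\cl T}$, and an operator system is not closed under multiplication, so one cannot conclude $[\cl P\cl P^*\cl P]\subseteq\cl P$. Proposition \ref{P:tro os} is built precisely to bridge this gap, by passing from $\cl P$ to the TRO obtained by adjoining the unital C*-algebra $\ca(\cl P^*\cl P)$ it generates on its domain. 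Beyond this observation, the whole proof is routine manipulation of closed linear spans and the bimodule relations recorded right after Definition \ref{D:TRO}.
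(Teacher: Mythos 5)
Your proof is correct, and it takes a mildly but genuinely different route from the paper's. The paper handles transitivity by a direct construction: it sets $\cl D$ to be the $*$-algebra generated by $\cl M\cl M^*\cup\cl N^*\cl N$ (in your notation), defines $\cl M_3:=[\cl N\cl D\cl M]$, and verifies by hand both the TRO identity $\cl M_3\cl M_3^*\cl M_3\subseteq\cl M_3$ and the inclusions $\cl M_3^*\cl R\cl M_3\subseteq\ol{\cl S}$, $\cl M_3\cl S\cl M_3^*\subseteq\ol{\cl R}$, using that $\ol{\cl T}$ is a $\cl D$-bimodule. You instead take the naive candidate $\cl P=[\cl N\cl M]$, check only that it is a non-degenerate operator space intertwining $\cl S$ and $\cl R$ in the sense of Proposition \ref{P:tro os}, and let that proposition manufacture the TRO $[\cl P\,\ca(\cl P^*\cl P)]$. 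The two fixes are close in spirit (both repair the failure of $[\cl N\cl M]$ to be a TRO by adjoining a suitable selfadjoint algebra), but yours delegates all the TRO bookkeeping to an already proved lemma, which makes the argument shorter and also makes non-degeneracy of the implementing TRO explicit (Proposition \ref{P:tro os} records it), a point the paper's proof leaves to the reader; the paper's direct construction, in exchange, exhibits a concrete symmetric TRO $[\cl N\cl D\cl M]$ without invoking the C*-algebra $\ca(\cl P^*\cl P)$. There is no circularity in your use of Proposition \ref{P:tro os}, since it precedes this proposition and does not depend on transitivity. Your diagnosis of why $[\cl N\cl M]$ itself can fail to be a TRO is also accurate; the only cosmetic slip is writing $[\cl M^*\cl T\cl M]$ where $[\cl M^*\ol{\cl T}\cl M]$ is meant, which your appeal to continuity of multiplication already covers.
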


\begin{proof}
It suffices to show transitivity. 
Let $\S \subseteq \B(H)$, $\T \subseteq \B(K)$ and $\R \subseteq \B(L)$ be such that $\S \sim_{\rm TRO} \T$ via $\M_1$ and $\T \sim_{\rm TRO} \R$ via $\M_2$.
Let $\D$ be the $*$-algebra generated by $\M_1 \M_1^* \cup \M_2^* \M_2$ and  $\M_3 := [\M_2 \D \M_1]$.
We have
\[
\M_3 \M_3^* \M_3 \subseteq \M_2 \D \M_1 \M_1^* \D \M_2^* \M_2 \D \M_1 \subseteq \M_2 \D \M_1 \subseteq \M_3,
\]
that is, $\cl M_3$ is a TRO. 
Moreover,
\[
\M_1 \M_1^* \T \cup \T \M_1 \M_1^* \subseteq \ol{\T}
\qand
\M_2^* \M_2 \T \cup \T \M_2^* \M_2 \subseteq \ol{\T},
\]
and so $\ol{\T}$ is a $\D$-bimodule.
Therefore
\[
\M_3^* \R \M_3 \subseteq \M_1^* \D \M_2^* \R \M_2 \D \M_1 \subseteq \M_1^* \D \ol{\T} \D \M_1 \subseteq \M_1^* \ol{\T} \M_1 \subseteq \ol{\S};
\]
similarly, $\M_3 \S \M_3^* \subseteq \ol{\R}$, and the proof is complete.
\end{proof}

%%%%%%%%%%%%%%%%%%%%%%%%%%%%%%%%
\subsection{$\Delta$-equivalence}
%%%%%%%%%%%%%%%%%%%%%%%%%%%%%%%%

We now proceed to a notion of equivalence that does not depend on the concrete representation of the operator systems.

%%%%%%%%%%%%%%%%%%%%%%%%%%%%%%%%
\begin{definition}\label{D:delta}
Two operator systems $\cl S$ and $\cl T$ are called \emph{$\Delta$-equivalent} (denoted $\cl S\sim_{\Delta}\cl T$) if there exist Hilbert spaces $H$ and $K$ and unital complete order embeddings $\phi \colon \cl S\to \cl B(H)$ and $\psi \colon \cl T \to \cl B(K)$ such that $\phi(\cl S)\sim_{\rm TRO} \psi(\cl T)$. 
\end{definition}

%%%%%%%%%%%%%%%%%%%%%%%%%%%%%%%%
\begin{proposition}\label{p_coarser}
$\Delta$-equivalence of operator systems is an equivalence relation, coarser than unital complete order isomorphism.
\end{proposition}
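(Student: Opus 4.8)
The plan is to verify the three defining properties of an equivalence relation and then the comparison with unital complete order isomorphism. For \textbf{reflexivity}: given an operator system $\cl S$, fix any unital complete order embedding $\phi\colon\cl S\to\cl B(H)$ with $\phi(\cl S)$ acting non-degenerately (or just take $H$ so that $I_H\in\phi(\cl S)$), and observe that the TRO $\cl M=\bC I_H\subseteq\cl B(H)$ is non-degenerate and satisfies $\cl M^*\phi(\cl S)\cl M=\phi(\cl S)\subseteq\ol{\phi(\cl S)}$, and symmetrically; hence $\phi(\cl S)\sim_{\rm TRO}\phi(\cl S)$, so $\cl S\sim_\Delta\cl S$. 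For \textbf{symmetry}: if $\phi(\cl S)\sim_{\rm TRO}\psi(\cl T)$ via a non-degenerate TRO $\cl M\subseteq\cl B(H,K)$, then $\cl M^*\subseteq\cl B(K,H)$ is a non-degenerate TRO (being the adjoint of a TRO), and the inclusions $(\cl M^*)^*\phi(\cl S)(\cl M^*)=\cl M\phi(\cl S)\cl M^*\subseteq\ol{\psi(\cl T)}$ together with $(\cl M^*)\psi(\cl T)(\cl M^*)^*=\cl M^*\psi(\cl T)\cl M\subseteq\ol{\phi(\cl S)}$ show $\psi(\cl T)\sim_{\rm TRO}\phi(\cl S)$, whence $\cl T\sim_\Delta\cl S$.

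For \textbf{transitivity}, suppose $\cl S\sim_\Delta\cl T$ and $\cl T\sim_\Delta\cl R$. The subtlety here is that the first relation provides embeddings $\phi_1\colon\cl S\to\cl B(H)$, $\psi_1\colon\cl T\to\cl B(K_1)$ with $\phi_1(\cl S)\sim_{\rm TRO}\psi_1(\cl T)$, while the second provides embeddings $\psi_2\colon\cl T\to\cl B(K_2)$, $\rho_2\colon\cl R\to\cl B(L)$ with $\psi_2(\cl T)\sim_{\rm TRO}\rho_2(\cl R)$, and a priori the two concrete copies $\psi_1(\cl T)\subseteq\cl B(K_1)$ and $\psi_2(\cl T)\subseteq\cl B(K_2)$ of $\cl T$ are different. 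I expect this to be the main obstacle. The way to deal with it is to transport: since $\psi_2\circ\psi_1^{-1}$ is a unital complete order isomorphism between the concrete operator systems $\psi_1(\cl T)$ and $\psi_2(\cl T)$, I would first establish (or invoke, if it is available) the auxiliary fact that TRO-equivalence of concrete operator systems is preserved under unital complete order isomorphism after passing to a common representation — more precisely, that if $\cl S_1\subseteq\cl B(H)$ is TRO-equivalent to $\cl T_1\subseteq\cl B(K_1)$ and $\cl T_1\simeq_{\rm c.o.i.}\cl T_2\subseteq\cl B(K_2)$, then there is a completely isometric copy $\cl S_2$ of $\cl S_1$ and a representation making $\cl S_2\sim_{\rm TRO}\cl T_2$. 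Granting this, one replaces $\phi_1(\cl S)$ by a suitable copy $\cl S'$ so that $\cl S'\sim_{\rm TRO}\psi_2(\cl T)$, and then concrete transitivity (Proposition \ref{P:transitive}) applied to $\cl S'\sim_{\rm TRO}\psi_2(\cl T)\sim_{\rm TRO}\rho_2(\cl R)$ yields $\cl S\sim_\Delta\cl R$.

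An alternative, and perhaps cleaner, route to transitivity is to sidestep the matching problem entirely by using the characterisation of $\Delta$-equivalence as stable isomorphism (Theorem \ref{T:stable}, which the excerpt announces is available via \cite{EK17}): stable isomorphism of operator systems, i.e. $\cl S\otimes\fK\simeq_{\rm c.o.i.}\cl T\otimes\fK$, is patently transitive since complete order isomorphisms compose. If I am allowed to invoke that theorem at this point in the exposition I would simply cite it; if the exposition requires Proposition \ref{p_coarser} to precede Theorem \ref{T:stable}, I would fall back on the transport argument of the previous paragraph. Finally, for the assertion that $\Delta$-equivalence is \textbf{coarser than unital complete order isomorphism}: if $\cl S\simeq_{\rm c.o.i.}\cl T$, pick a unital complete order embedding $\phi\colon\cl S\to\cl B(H)$ and let $\psi\colon\cl T\to\cl B(H)$ be the composition of the inverse isomorphism $\cl T\to\cl S$ with $\phi$, so that $\psi(\cl T)=\phi(\cl S)$ as concrete operator systems; by reflexivity above (with $\cl M=\bC I_H$) we get $\phi(\cl S)\sim_{\rm TRO}\psi(\cl T)$, hence $\cl S\sim_\Delta\cl T$. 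That $\Delta$-equivalence is strictly coarser is not asserted in the statement and need not be addressed.
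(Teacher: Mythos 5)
Your argument is correct and follows essentially the same route as the paper: reflexivity, symmetry and coarseness via the trivial TRO $\bC I_H$ (after composing with an embedding so the two concrete copies coincide), and transitivity reduced to concrete TRO-transitivity (Proposition \ref{P:transitive}) together with a transport lemma for replacing one concrete copy of the middle system by another. The transport fact you flag as the needed auxiliary input is precisely what the paper invokes wholesale from \cite{EK17} (Theorem 3.11, resting on Lemma 3.10 there), and your decision not to use Theorem \ref{T:stable} is right, since the paper's proof of that theorem itself cites Proposition \ref{p_coarser}.
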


\begin{proof}
The proof follows the lines of \cite[Theorem 3.11]{EK17}, the necessary changes requiring only the fact that TRO-equivalence is an equivalence relation.
If $\phi \colon \S \to \T$ is a unital complete order isomorphism, then $\phi(\S) \sim_{\rm TRO} \T$ via $\M = \bC$, and so $\S \sim_\Delta \T$.
\end{proof}

%%%%%%%%%%%%%%%%%%%%%%%%%%%%%%%%
\begin{theorem}\label{T:stable} 
Let $\cl S$ and $\cl \T$ be operator systems. 
The following are equivalent:
\begin{enumerate}
\item $\cl S \sim _{\Delta} \cl T$ as operator systems;
\item $\S \otimes \fK \simeq \T \otimes \fK$ via a completely positive completely isometric isomorphism;
\item $\S \otimes \fK \simeq \T \otimes \fK$ via a hermitian completely isometric isomorphism.
\end{enumerate} 
\end{theorem}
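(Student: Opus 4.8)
The plan is to prove the cycle of implications (i)$\Rightarrow$(ii)$\Rightarrow$(iii)$\Rightarrow$(iv)$\Rightarrow$(v)$\Rightarrow$(i), leaning on the operator space results collected from \cite{EK17} and handling only the new unital/selfadjoint bookkeeping. For (i)$\Rightarrow$(ii): if $\phi\colon\S\to\B(H)$ and $\psi\colon\T\to\B(K)$ are unital complete order embeddings with $\phi(\S)\sim_{\rm TRO}\psi(\T)$ via a non-degenerate TRO $\M$, then $\ol{\phi(\S)}$ and $\ol{\psi(\T)}$ are complete operator spaces that are TRO-equivalent via $\M$ in the sense of \cite{EK17} (the inclusions $\M^*\ol{\T}\M\subseteq\ol{\S}$ etc.\ follow by passing to norm closures as in the discussion after Definition \ref{D:TRO}, and the bimodule identities give equality by Proposition \ref{P:complete}); since $\phi,\psi$ are in particular complete isometries, $\ol{\S}\sim_\Delta\ol{\T}$ as operator spaces.

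For (ii)$\Rightarrow$(iii): by item (iv) of the list from \cite{EK17}, $\Delta$-equivalent closed operator spaces have $\Delta$-equivalent TRO envelopes, so $\tenv(\ol{\S})\sim_\Delta\tenv(\ol{\T})$; but $\S,\T$ are unital, so $\tenv(\ol{\S})=\cenv(\S)$ and $\tenv(\ol{\T})=\cenv(\T)$ are C*-algebras, hence by item (ii) of that list (their canonical C*-algebras have approximate units, in fact are unital) they are stably isomorphic as operator spaces, i.e.\ $\cenv(\S)\otimes\fK\simeq\cenv(\T)\otimes\fK$. The point is then to descend this to $\S\otimes\fK\simeq\T\otimes\fK$; here I would instead argue directly that TRO-equivalence of $\phi(\S)$ and $\psi(\T)$ together with the unitality of $[\M^*\M]\ni 1_\S$, $[\M\M^*]\ni 1_\T$ places $\S$ and $\T$ in a Rieffel-type imprimitivity situation whose C*-algebras $[\M^*\M]$, $[\M\M^*]$ are unital (so certainly have countable approximate units if we first restrict, by the standard separability-reduction, to a separable sub-TRO), whence \cite[Corollary 4.7]{EK17} applies to give a complete isometry $\ol{\S}\otimes\fK\simeq\ol{\T}\otimes\fK$ that, by its explicit form $x\mapsto \underline{m}_2\cdot x\cdot\underline{m}_1^*$ as in Remark \ref{R:m.a.i.}, restricts to a complete isometry $\S\otimes\fK\to\T\otimes\fK$.

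The implications (iii)$\Rightarrow$(iv)$\Rightarrow$(v) are the soft part and amount to promoting a complete isometry between operator systems to one respecting the order/adjoint structure. Here I would use the standard fact that $\S\otimes\fK$ and $\T\otimes\fK$ are (non-unital) operator systems and that a surjective complete isometry between them, after composing with the canonical identifications, may be replaced by a unital (on the multiplier level) one; concretely, a complete isometry $u\colon\S\otimes\fK\to\T\otimes\fK$ extends to the unitizations, and by Paulsen's lemma / the structure of complete isometries between operator systems one writes $u=\omega\cdot\rho$ with $\omega$ a unitary (coming from the multiplier algebra of $\T\otimes\fK$) and $\rho$ a unital complete order isomorphism; absorbing $\omega$ yields (iv), and since a unital complete order isomorphism is automatically hermitian (self-adjoint), (v) follows, while the implication (iv)$\Rightarrow$(v) is immediate as completely positive complete isometries between (non-unital) operator systems are hermitian on the self-adjoint parts.

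Finally (v)$\Rightarrow$(i): given a hermitian completely isometric isomorphism $u\colon\S\otimes\fK\to\T\otimes\fK$, represent $\S\otimes\fK\subseteq\B(H\otimes\ell^2)$ faithfully; then the graph construction produces a TRO implementing a TRO-equivalence between $\S\otimes\fK$ and $\T\otimes\fK$ (this is the standard "stable isomorphism $\Rightarrow$ TRO-equivalence" mechanism of \cite[Theorem 4.3]{EK17}, and hermiticity guarantees the equivalence is compatible with the self-adjoint structure so that it genuinely relates $\S$ and $\T$ as operator systems rather than merely as spaces), and composing with the corner embeddings $\S\hookrightarrow\S\otimes\fK$, $\T\hookrightarrow\T\otimes\fK$ (which are unital complete order embeddings after tensoring with a rank-one projection and adjusting the unit) realises $\S\sim_\Delta\T$. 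The main obstacle I anticipate is the descent in (ii)$\Rightarrow$(iii): keeping track of unitality and of the self-adjoint structure through the TRO-envelope machinery, and ensuring the complete isometry produced by the $\Delta$-equivalence of TRO envelopes can be chosen to carry $\S\otimes\fK$ onto $\T\otimes\fK$ (not just their C*-envelopes onto each other) — this is where the explicit compression maps of Remark \ref{R:m.a.i.}, which manifestly preserve the operator-system structure, do the essential work.
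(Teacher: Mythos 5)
Your steps (i)$\Rightarrow$(ii), (ii)$\Rightarrow$(iii) and (iv)$\Rightarrow$(v) are essentially sound and close to the paper's (which simply reduces to complete systems via Proposition \ref{P:complete} and quotes \cite[Corollary 4.10]{EK17} for unital spaces; your separability reduction in (ii)$\Rightarrow$(iii) is unnecessary, since unitality of $[\M^*\M]$ and $[\M\M^*]$ already gives one-element approximate identities). The genuine gap is your (iii)$\Rightarrow$(iv). You assert that a surjective complete isometry $u\colon \S\otimes\fK\to\T\otimes\fK$ extends to unitizations and factors, ``by Paulsen's lemma / the structure of complete isometries between operator systems,'' as $u=\omega\cdot\rho$ with $\omega$ a unitary multiplier of $\T\otimes\fK$ and $\rho$ a complete order isomorphism. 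No such structure theorem is available here: $\S\otimes\fK$ is a non-unital, non-TRO operator subspace, complete isometries between such systems need not be hermitian nor extend unitally, and the unitary-times-isomorphism factorization is a fact about TROs/C*-algebras, not about these systems. This upgrade is exactly the nontrivial content separating (iii) from (iv), and the paper obtains it not from $u$ itself but from the identification $\tenv(\S\otimes\fK)\simeq\cenv(\S)\otimes\fK$ of (\ref{eq_fK}) (equivalently \cite[Proposition 2.37]{CS20}): the complete isometry induces an isomorphism $\cenv(\S)\otimes\fK\simeq\cenv(\T)\otimes\fK$ of the stabilized C*-envelopes whose restriction is the required completely positive complete isometry of $\S\otimes\fK$ onto $\T\otimes\fK$. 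Note that even if you pass to triple envelopes and write the induced triple isomorphism as $w\pi(\cdot)$ with $\pi$ a $*$-isomorphism and $w$ a unitary multiplier of $\cenv(\T)\otimes\fK$, the unitary $w$ has no reason to carry $\T\otimes\fK$ onto itself, so ``absorbing $\omega$'' is not justified; your argument as written does not close.

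Two smaller points on (v)$\Rightarrow$(i): the corner embedding $s\mapsto s\otimes e_{11}$ is a complete order embedding but is not unital, so ``adjusting the unit'' needs to be replaced by an actual mechanism — the paper uses the TRO $R(\bC)$ to realise $\S\sim_{\rm TRO}\S\otimes\fK$, then \cite[Lemma 3.10]{EK17} to produce concrete completely isometric copies TRO-equivalent to $\X=\S\otimes\fK$ inside $\I(\X)$, and concludes by transitivity (Propositions \ref{P:transitive} and \ref{p_coarser}). Likewise, your parenthetical claim that ``hermiticity guarantees the equivalence is compatible with the self-adjoint structure'' is the precise point that needs an argument rather than an assertion; as it stands it restates what must be proved.
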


\begin{proof}
By Proposition \ref{P:complete}, we may assume that $\S$ and $\T$ are complete operator systems.
Item (i) implies item (ii) as in the proof of \cite[Corollary 4.10]{EK17} as the spaces are unital and the constructed isomorphisms in this case are completely positive and completely isometric. 
Assume item (iii) holds and suppose that $\S \subseteq \B(H)$ and $\T \subseteq \B(K)$.
Then $\S \sim_{\rm TRO} \S \otimes \fK$ as operator spaces (and systems) via the TRO $\M := R(\bC)$.
Thus, we have TRO-equivalences of operator spaces 
\[
\S \sim_{\rm TRO} \S \otimes \fK \simeq \T \otimes \fK \sim_{\rm TRO} \T
\]
via a single TRO each. 
Let $\X = \S \otimes \fK$ and consider $\X \subseteq \I(\X)$.
By \cite[Lemma 3.10]{EK17}, there exists a completely isometric map $\phi$ of $\S$ and a TRO $\M_1$ such that
\[
[\M_1 \phi(\S) \M_1^*] = \X \qand [\M_1^* \X \M_1] = \phi(\S).
\]
Hence $\phi(\S)$ and $\X$ are TRO equivalent as operator systems.
Similarly, there exists a completely isometric map $\psi$ defined on $\T$ and a TRO $\M_2$ such that
\[
[\M_2 \psi(\T) \M_2^*] = \X \qand [\M_2^* \X \M_2] = \psi(\T);
\]
thus, $\psi(\T)$ and $\X$ are TRO equivalent as operator systems.
Since $\cl X \simeq \T \otimes \fK$, Propositions \ref{P:transitive} and \ref{p_coarser} imply that $\phi(\S) \sim_{\rm TRO} \psi(\T)$, and therefore $\S \sim_{\Delta} \T$ as operator systems.
\end{proof}

We can choose representations for $\Delta$-equivalence that in addition respect C*-envelopes.
The next proposition is an operator system version of \cite[Theorem 5.10]{EK17}.

%%%%%%%%%%%%%%%%%%%%%%%%%%%%%%%%
\begin{proposition}\label{P:reduction}
Let $\cl S$ and $\cl T$ be operator systems. 
If $\S \sim_{\Delta} \T$ then there exist 
complete order embeddings $\phi \colon \S \to \B(H)$ and $\psi \colon \T \to \B(K)$, 
and a non-degenerate TRO $\M \subseteq \B(H, K)$, such that $\phi(\S) \sim_{\rm TRO} \psi(\T)$ via $\M$, and in addition
\[
\phi(\A_\S) = [\M^* \M] \qand \psi(\A_\T) = [\M \M^*].
\]
Furthermore, 
\[
\cenv(\S) \simeq \ca(\phi(\S)) \qand \cenv(\T) \simeq \ca(\psi(\T)),
\]
and consequently $\cenv(\S) \sim_{\Delta} \cenv(\T)$.
\end{proposition}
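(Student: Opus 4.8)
The plan is to mirror the proof of the operator-space statement \cite[Theorem~5.10]{EK17} (the recipe recorded above following item~(iv)) while carrying along the extra operator-system data: unitality, the matricial order, and the multiplier C*-algebras $\A_\S$, $\A_\T$. By Proposition~\ref{P:complete} I may assume that $\S$ and $\T$ are complete. I would organise the argument in two stages: (1) produce unital complete order embeddings realising the $\Delta$-equivalence in representations that \emph{generate} the C*-envelopes; (2) enlarge the implementing TRO so that its right and left C*-algebras become exactly $\phi(\A_\S)$ and $\psi(\A_\T)$.

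For stage (1), I would apply the recipe of \cite[Theorem~5.10]{EK17} to the operator spaces $\ol\S=\S$ and $\ol\T=\T$. Since $\S$ and $\T$ are unital, the TRO envelope coincides with the C*-envelope and the canonical embedding $\iota_{\env}$ is a unital complete order embedding; so one may take $\phi=\iota_{\env}\colon\S\to\cenv(\S)$, which generates $\cenv(\S)$, together with a completely isometric $\psi$ of $\T$ such that $\psi(\T)$ is TRO-equivalent, as an operator space, to $\phi(\S)$ and $\T(\psi(\T))$ is triple-isomorphic to $\tenv(\T)=\cenv(\T)$ by an isomorphism fixing $\T$. Representing the resulting C*-algebra faithfully and non-degenerately (and carrying the TRO along by the Rieffel induction of Remark~\ref{R:ind rep}), $\psi$ becomes a unital complete order embedding with $\ca(\psi(\T))\simeq\cenv(\T)$; here one uses that a unital complete isometry between operator systems is automatically a complete order isomorphism onto its range. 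Finally the two-TRO operator-space equivalence between the unital selfadjoint spaces $\phi(\S)$ and $\psi(\T)$ is replaced by a single non-degenerate TRO $\N\subseteq\B(H,K)$ as in Proposition~\ref{P:tro os}. Because $\phi$ and $\psi$ are unital complete order embeddings, they extend to $*$-isomorphisms of the ambient injective envelopes, so $\phi(\A_\S)$ and $\psi(\A_\T)$ are precisely the multiplier algebras $\A_{\phi(\S)}\subseteq\ca(\phi(\S))$ and $\A_{\psi(\T)}\subseteq\ca(\psi(\T))$.

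For stage (2), the remarks after Definition~\ref{D:TRO} show that $[\N^*\N]$ is a C*-subalgebra of $\phi(\ol\S)=\phi(\S)$ that is a sub-bimodule for its own action, so the defining property of $\A_\S$ forces $[\N^*\N]\subseteq\phi(\A_\S)$. I would then put $\M_1:=[\N\,\phi(\A_\S)]$ and $\M:=[\psi(\A_\T)\,\M_1]=[\psi(\A_\T)\,\N\,\phi(\A_\S)]$. Using $\phi(\A_\S)[\N^*\N]\phi(\A_\S)\subseteq\phi(\A_\S)$ one checks that $\M_1$ is a closed TRO with $[\M_1^*\M_1]=\phi(\A_\S)$, that $\M_1$ is non-degenerate and realises $\phi(\S)\sim_{\rm TRO}\psi(\T)$ (because $\S$ is an operator $\A_\S$-system, so $\phi(\A_\S)\phi(\S)\phi(\A_\S)\subseteq\phi(\S)$), and hence, by the remarks after Definition~\ref{D:TRO}, that $[\M_1\M_1^*]\subseteq\psi(\A_\T)$; left-multiplying by the C*-algebra $\psi(\A_\T)$ then keeps $\M$ a TRO realising the equivalence, with $[\M^*\M]\supseteq[\M_1^*[\M_1\M_1^*]\M_1]=[\M_1^*\M_1]=\phi(\A_\S)$ and $[\M\M^*]\supseteq\psi(\A_\T)$. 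Non-degeneracy of $\M$ now follows, and the reverse inclusions $[\M^*\M]\subseteq\phi(\A_\S)$, $[\M\M^*]\subseteq\psi(\A_\T)$ are again automatic from the sub-bimodule-C*-subalgebra characterisation of $\A_\S$ and $\A_\T$. The identifications $\cenv(\S)\simeq\ca(\phi(\S))$, $\cenv(\T)\simeq\ca(\psi(\T))$ are exactly what stage (1) provides; and passing to the C*-algebras generated in $\M^*\psi(\T)\M\subseteq\phi(\S)$ and $\M\phi(\S)\M^*\subseteq\psi(\T)$ (using that $[\M\M^*]$, $[\M^*\M]$ act as approximate units) gives $\M^*\ca(\psi(\T))\M\subseteq\ca(\phi(\S))$ and $\M\ca(\phi(\S))\M^*\subseteq\ca(\psi(\T))$, so $\ca(\phi(\S))\sim_{\rm TRO}\ca(\psi(\T))$ via the non-degenerate $\M$, whence $\cenv(\S)\sim_\Delta\cenv(\T)$.

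I anticipate the main obstacle to be stage (1): the machinery of \cite{EK17} only furnishes complete isometries near the TRO envelopes, and promoting these to \emph{unital complete order} embeddings whose ranges generate the \emph{C*-}envelopes, together with collapsing the two-TRO operator-space equivalence to a single non-degenerate TRO, requires careful bookkeeping (so that the multiplier algebras $\A_\S$, $\A_\T$ and the operator-$\A_\S$-system structure used in stage (2) are legitimately available). Stage (2) itself — the enlargement $\N\rightsquigarrow\M_1\rightsquigarrow\M$ and the computation of the ends — is the new operator-system ingredient but should amount to a routine calculation once $[\N^*\N]\subseteq\phi(\A_\S)$ and $[\N\N^*]\subseteq\psi(\A_\T)$ are in hand.
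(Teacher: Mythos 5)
Your proposal is correct and follows essentially the same route as the paper: start from the embedding of $\S$ into its injective envelope, invoke \cite[Theorem 5.10]{EK17} to obtain $\psi$ with $\ca(\psi(\T))\simeq\cenv(\T)$ and a single implementing TRO, and then enlarge that TRO by multiplying with $\psi(\A_\T)$ and $\phi(\A_\S)$ on the two sides (the paper does this in one step, $\L=[\psi(\A_\T)\,\M\,\phi(\A_\S)]$, rather than your two) to force $[\M^*\M]=\phi(\A_\S)$ and $[\M\M^*]=\psi(\A_\T)$. The only cosmetic difference is that you make explicit the reduction to a single non-degenerate TRO via Proposition~\ref{P:tro os} and the sub-bimodule characterisation of $\A_\S$, points the paper treats as direct verifications.
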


\begin{proof}
By Proposition \ref{P:complete}, we may assume, without loss of generality, that $\S$ and $\T$ are complete.
Let $\phi \colon \S \to \I(\S)$ be the unital completely isometric embedding in the injective envelope $\cl I(\cl S)$ of $\S$
and note that $\cenv(\S) \simeq \ca(\phi(\S))$. 
By \cite[Theorem 5.10]{EK17}, there exists a unital complete isometry $\psi \colon \T \to \B(K)$ and a closed TRO $\M$ such that $\phi(\S) \sim_{\rm TRO} \psi(\T)$ via $\M$.
It is moreover shown in \cite{EK17} that $\cenv(\T) \simeq \ca(\psi(\T))$, and that 
\[
[\M^* \ca(\psi(\T)) \M] = \ca(\phi(\S)) \qand [\M \ca(\phi(\S)) \M^*] = \ca(\psi(\T)).
\]
By abuse of notation, we write $\phi(\A_\S)$ and $\psi(\A_\T)$ for the C*-algebras with the property
\[
\A_\S \simeq \phi(\A_\S) \subseteq \ca(\phi(\S))
\qand
\A_\T \simeq \psi(\A_\T) \subseteq \ca(\psi(\T)).
\]
A direct verification shows that  
\[
[\M^* \psi(\A_\T) \M] \subseteq \phi(\A_\S) \qand [\M \phi(\A_\S) \M^*] \subseteq \psi(\A_\T).
\]
Since $[\M^* \M]$ and $[\M \M^*]$ are unital, applying these inclusions successively promotes them to equalities.
The subspace $\L := [\psi(\A_\T) \M \phi(\A_\S)]$ is a (closed) TRO since
\begin{align*}
\L \L^* \L 
& \subseteq [\psi(\A_\T) \cdot \M \phi(\A_\S) \M^* \cdot \psi(\A_\T) \M \phi(\A_\S)] 
= 
[\psi(\A_\T) \cdot \psi(\A_\T) \cdot \psi(\A_\T) \M \phi(\A_\S)] = \L.
\end{align*}
Using a similar argument, we obtain the required
\[
\phi(\A_\S) = [\L^* \L]
\qand
\psi(\A_\T) = [\L \L^*],
\]
and hence the fact that $\phi(\S) \sim_{\rm TRO} \psi(\T)$ via $\L$.
\end{proof}

Of particular interest are the operator systems $\cl S$ with $\cl A_{\cl S} = \bb{C} \cdot 1_S$.
Following \cite{EPT10}, we call such operator systems \emph{rigid}.
The $\Delta$-equivalence for this class coincides with complete order isomorphism.
More generally we have the following corollary.

%%%%%%%%%%%%%%%%%%%%%%%%%%%%%%%%
\begin{corollary} \label{C:rigid}
Let $\S \subseteq \B(H)$ and $\T \subseteq \B(K)$ be operator systems.
If $\S$ is rigid, then $\S \sim_{\Delta} \T$ if and only if there exists $k\in \bb{N}$ such that $\ol{\T} \simeq_{\rm c.o.i.} M_k(\ol{\S})$.
Moreover, if $\S \sim_{\Delta} \T$ then $\A_\T \simeq M_k$.
\end{corollary}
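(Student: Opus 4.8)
The plan is to derive the corollary from Proposition~\ref{P:reduction}, together with the observation that rigidity of $\S$ pins down the ``small'' TRO appearing there, plus one short structural fact about closed TROs whose right-hand C*-algebra is one-dimensional. The only genuinely substantive point will be that structural fact; once it is available, everything else is bookkeeping with a unitary identification and, for the converse, a routine matrix-unit computation.

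For necessity (together with the ``moreover'' part), assume $\S \sim_\Delta \T$; by Proposition~\ref{P:complete} we may take $\S$ and $\T$ complete. Proposition~\ref{P:reduction} then supplies complete order embeddings $\phi \colon \S \to \B(H)$ and $\psi \colon \T \to \B(K)$ and a non-degenerate closed TRO $\M \subseteq \B(H,K)$ with $\phi(\S) \sim_{\rm TRO} \psi(\T)$ via $\M$, together with $\phi(\A_\S) = [\M^*\M]$ and $\psi(\A_\T) = [\M\M^*]$. Since $\S$ is rigid and $\phi$ is unital, $[\M^*\M] = \phi(\A_\S) = \bC I_H$, so $\M^*\M \subseteq \bC I_H$. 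The key step is the standard structure fact: the rule $\langle m,n\rangle I_H := m^*n$ makes $\M$ into a Hilbert space, and for an orthonormal basis $(e_\al)_{\al\in A}$ of $\M$ one has $e_\al^* e_\be = \de_{\al\be} I_H$, the elements $e_\al e_\be^*$ multiply as matrix units, and $[\M\M^*]$ is the closed span of these, so $[\M\M^*] \simeq \K(\ell^2(A))$. Because $\M$ is non-degenerate, $[\M\M^*]$ is unital (see Remark~\ref{R:unit}), which forces $A$ to be finite, say $|A| = k \geq 1$; hence $[\M\M^*] \simeq M_k$ and $\sum_{i=1}^k e_i e_i^* = I_K$. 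In particular $\A_\T \simeq \psi(\A_\T) = [\M\M^*] \simeq M_k$, which is the ``moreover'' assertion.

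To identify $\ol\T$, I would use the operator $U \colon H^{(k)} \to K$, $U(\xi_i)_{i=1}^k = \sum_{i=1}^k e_i\xi_i$, which is unitary precisely because $e_i^*e_j = \de_{ij} I_H$ and $\sum_i e_i e_i^* = I_K$, and whose adjoint sends the $i$-th summand of $H^{(k)}$ to $U^*e_i$. Conjugation by $U$ is a unital $*$-isomorphism $\B(K) \to \B(H^{(k)}) = M_k(\B(H))$, hence a unital complete order isomorphism, and it carries $\psi(\T) = [\M\phi(\S)\M^*]$ onto $M_k(\phi(\S))$ because $U^*(e_i\,\phi(s)\,e_j^*)U$ is $\phi(s)$ placed in the $(i,j)$ entry. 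Since $\phi(\S) \simeq_{\rm c.o.i.} \S = \ol\S$, this gives $\ol\T \simeq_{\rm c.o.i.} M_k(\ol\S)$.

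For sufficiency, suppose $\ol\T \simeq_{\rm c.o.i.} M_k(\ol\S)$. Representing $\ol\S \subseteq \B(H)$, a direct computation shows that the column space $\M := C_k(\bC) \subseteq \B(H, H^{(k)})$ is a non-degenerate TRO with $[\M^*\M] = \bC I_H$, $[\M\M^*] \simeq M_k$, $[\M^* M_k(\ol\S)\,\M] = \ol\S$ and $[\M\,\ol\S\,\M^*] = M_k(\ol\S)$; hence $\ol\S \sim_{\rm TRO} M_k(\ol\S)$ and so $\ol\S \sim_\Delta M_k(\ol\S)$. Combining this with $\S \sim_\Delta \ol\S$ and $\T \sim_\Delta \ol\T$ (TRO-equivalence via $\M = \bC$), and with $M_k(\ol\S) \sim_\Delta \ol\T$ (Proposition~\ref{p_coarser}), transitivity of $\sim_\Delta$ yields $\S \sim_\Delta \T$. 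As noted above, the main obstacle is isolating the structural lemma on TROs with one-dimensional right algebra — extracting the orthonormal basis $(e_\al)$ and recognising that unitality of $[\M\M^*]$ forces $\M$ to be finite-dimensional; the rest of the argument is routine.
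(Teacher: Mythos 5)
Your argument is correct and follows essentially the same route as the paper: Proposition \ref{P:reduction} together with rigidity forces $[\M^*\M]=\bC I_H$, so $\M$ is a Hilbert space whose algebra $[\M\M^*]\simeq\A_\T$ of ``compacts'' is unital, whence $\dim\M=k<\infty$ and $\A_\T\simeq M_k$, and your converse via the TRO $C_k$ and transitivity is the paper's as well. The only (harmless) difference is the final identification of $\ol{\T}$ with $M_k(\ol{\S})$: you conjugate by the unitary $U\colon H^{(k)}\to K$ built from the orthonormal basis $e_1,\dots,e_k$, whereas the paper reaches the same conclusion through the column/row identifications $\T\simeq C_k(\Y)$ and $\Y\simeq R_k(\S)$ — both hinge on exactly the same relations $e_i^*e_j=\de_{ij}I_H$ and $\sum_i e_ie_i^*=I_K$.
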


\begin{proof}
Without loss of generality, assume that $\cl S$ and $\cl T$ are complete.
Suppose that $\S \sim_{\Delta} \T$.
By Proposition \ref{P:reduction}, there exist unital complete order embeddings $\phi \colon \cl S\to \cl B(H)$ and $\psi \colon\cl T\to \cl B(K)$ such that 
\[
\ca(\phi(\S)) \simeq \cenv(S) \qand \ca(\psi(\T)) \simeq \cenv(\T),
\]
and $\phi(\S) \sim_{\rm TRO} \psi(\T)$ via a TRO $\M$ with
\begin{equation}\label{eq_asatm}
[\M^* \M] \simeq \A_\S \qand [\M \M^*] \simeq \A_\T.
\end{equation}
Since $\phi$ and $\psi$ are unital complete order embeddings, we suppress notation and replace $\phi(\S)$ by $\S$ and $\psi(\T)$ by $\T$.
By the identification on the left in (\ref{eq_asatm}), $\cl M$ is a right Hilbert module over $\bC$, that is, a Hilbert space.
On the other hand, for the C*-algebra $\K(\M)$ of compact operators on $\M$ we have
\[
\K(\M) \simeq [\M \M^*] \simeq \A_\T.
\]
Hence $\K(\M)$ is unital, and thus $\M$ is finite dimensional.
From this we derive that $\A_\T \simeq M_k$ for $k = \dim \M$.

Let $\{m_j\}_{j=1}^k$ be a family with 
\[
I_K = \sum_{i=1}^k m_i m_i^*
\qand
m_i^* m_j= \de_{i,j} I_H, \ i,j = 1,\dots,k.
\]
Define 
\[
\mathcal Y := [\mathcal M \mathcal S] = [\mathcal T \mathcal M],
\]
and let the completely contractive maps
\[
\phi \colon \mathcal T \rightarrow C_k(\mathcal Y); \ \phi(t) = \begin{bmatrix} t m_1 \\ \vdots \\ t m_k \end{bmatrix}
\qand
\psi \colon C_k(\mathcal Y)\rightarrow \mathcal T; \ \psi( \begin{bmatrix} y_1 \\ \vdots \\ y_k \end{bmatrix} ) = \sum_{i=1}^k y_i m_i^*.
\]
Since
\[
(\psi \circ \phi)(t) = \sum_{i=1}^k t m_i m_i^*= t, \ \ \ t\in \cl T, 
\]
we have that $\phi$ is a complete isometry.
For $(y_1, \dots, y_k)^t\in  C_k(\mathcal Y)$, let $t := \sum_{i=1}^k y_i m_i^* \in \T$, and observe that, 
since $m_i^* m_j = \de_{i,j} I_H$, we moreover have
\[
\phi(t) = \left(\sum_{i=1}^k y_i m_i^* m_1, \dots, \sum_{i=1}^k y_i m_i^* m_k\right)^t = (y_1, \dots, y_k)^t.
\] 
Therefore $\phi$ is onto $C_k(\mathcal Y)$, and thus $\mathcal T \simeq C_k(\mathcal Y)$. 
Similar arguments give that $\mathcal Y \simeq R_k(\mathcal S)$, and hence 
\[
\mathcal T \simeq C_k(R_k(\mathcal S))\simeq M_k(\mathcal S).
\]

Conversely suppose that $\T \simeq M_k(\S)$, and so $\T \sim_{\Delta} M_k(\S)$.
On the other hand, recall that $\S \sim_{\Delta} M_k(\S)$ by the TRO $\M := C_k$, since
\[
[C_k \cdot \S \cdot R_k] = M_k(\S)
\qand
[R_k \cdot M_k(\S) \cdot C_k] = \S.
\]
Due to the transitivity of $\Delta$-equivalence we get that $\T \sim_{\Delta} \S$, and the proof is complete.
\end{proof}

%%%%%%%%%%%%%%%%%%%%%%%%%%%%%%%%
\begin{example}
Corollary \ref{C:rigid} covers the operator systems considered in \cite{CS20}.
The operator system $C(S^1)^{(n)} \subseteq M_n$ therein consists of $n \times n$ complex-valued Toeplitz matrices, that is, matrices of the form
\[
T :=
\begin{bmatrix}
t_0 & t_{-1} & \cdots & t_{-n+2} & t_{-n+1} \\
t_1 & t_0 & t_{-1} & & t_{-n+2} \\
\vdots & t_1 & t_0 & \ddots & \vdots \\
t_{n-2} & & \ddots & \ddots & t_{-1} \\
t_{n-1} & t_{n-2} & \cdots & t_1 & t_0
\end{bmatrix}.
\]
These operator systems are rigid and finite-dimensional, by definition.
Thus, if $\T \sim_{\Delta} C(S^{(1)})^{(n)}$, then $\T$ is finite dimensional with $\A_\T \simeq M_k$, and $\T \simeq M_k(C(S^{(1)})^{(n)})$.
\end{example}

%%%%%%%%%%%%%%%%%%%%%%%%%%%%%%%%
\subsection{Morita contexts for operator systems}
%%%%%%%%%%%%%%%%%%%%%%%%%%%%%%%%

In this subsection, we provide an abstract characterisation of $\Delta$-equivalence through appropriate versions of Morita contexts, in the spirit of \cite{Bas62, BMP00, Rie74jpaa}.
In what follows, a trilinear map $\lambda \colon V_1\times V_2\times V_3 \to V$ is lifted to a trilinear map (denoted in the same way)
\[
\lambda \colon M_{k_1,k_2}(V_1) \times M_{k_2,k_3}(V_2)\times M_{k_3,k_{4}}(V_r) \to M_{k_1,k_{4}}(V),
\]
by letting 
\[
\lambda\left((v_{i_1,i_2}^{(1)}),(v_{i_2,i_3}^{(2)}),(v_{i_3,i_{4}}^{(r)})\right) 
= \left(\sum_{i_2 = 1}^{k_2} \sum_{i_3 =1}^{k_3} \lambda\left(v_{i_1,i_2}^{(1)},v_{i_2,i_3}^{(2)},v_{i_3,i_{4}}^{(r)}\right)\right).
\]
If $\cl A_i$ are algebras, $i = 1,2,3,4$, and $V_i$ is an $\cl A_i,\cl A_{i+1}$-bimodule, $i = 1,2,3$, while $V$ is an $\cl A_1,\cl A_4$-module, we say that $\lambda$ is a module map over $\cl A_1,\cl A_2, \cl A_3$ and $\cl A_4$ if 
\[
\lambda(a_1\cdot v_1\cdot a_2, v_2\cdot a_3, v_3\cdot a_4) 
= 
a_1\cdot \lambda(v_1, a_2 \cdot v_2, a_3 \cdot v_3)\cdot a_4,
\]
for all $v_i\in V_i$, $a_j\in \cl A_j$, $i = 1,2,3$, $j = 1,2,3,4$.

Let $\X$ be an operator space and $\S$ and $\T$ be operator systems.
We say that a trilinear map
\[
[ \cdot, \cdot, \cdot] \colon \X^* \times \T \times \X \longrightarrow \S
\]
is \emph{positive}, if 
\[
[a^*, \T^+, a] \subseteq \S^+ \foral  a \in \X.
\]
Recall that, in an operator system, the positive elements span the real vector space of all hermitian elements.
Hence positive trilinear maps are automatically compatible with taking adjoints in the sense that 
\[
[a^*, t, b]^* = [b^*, t^*, a] \ \foral a, b \in \X, t \in \T.
\]
We say that the trilinear map $[\cdot, \cdot, \cdot]$ is \emph{completely positive} if the induced trilinear map from $M_{n,m}(\cl X^*)\times M_m(\cl T) \times M_{m,n}(\cl X)$ into $M_n(\cl S)$ is positive for all $n,m\in \bb{N}$. 

%%%%%%%%%%%%%%%%%%%%%%%%%%%%%%%%
\begin{definition}\label{d_DelM}
Let $\S$ and $\T$ be abstract operator systems and $\M$ be a TRO.
We say that the quintuple $\big( \S, \T, \M, [\cdot, \cdot, \cdot], (\cdot, \cdot, \cdot) \big)$ is a \emph{$\Delta$-pre-context} if:

\begin{itemize}
\item[(i)] the C*-algebras $[\M^* \M]$ and $[\M \M^*]$ are unital;

\item[(ii)] $\S$ is a C*-bimodule over $[\M^* \M]$ and 
$\T$ is a C*-bimodule over $[\M \M^*]$; 

\item[(iii)] 
$[\cdot,\cdot,\cdot] \colon \M^* \times \T \times \M \longrightarrow \S$ and 
$(\cdot,\cdot,\cdot) \colon \M \times \S \times \M^* \longrightarrow \T$
are completely bounded completely positive maps, modular over $[\M^*\M]$ and $[\M \M^*]$
on the outer variables (with unital module actions), and

\item[(iv)] the associativity relations
\[
(m_1, [m_2^*, t, m_3], m_4^*) = (m_1m_2^*) \cdot t \cdot (m_3 m_4^*)
\]
and
\[
[m_1^*, (m_2,  s, m_3^*), m_4] = (m_1^* m_2) \cdot s \cdot (m_3^* m_4)
\]
hold for all $s \in \S$, $t \in \T$ and all $m_1, m_2, m_3, m_4 \in \M$.
\end{itemize}

\vspace{2pt}

A $\Delta$-pre-context is called a \emph{$\Delta$-context} if the trilinear maps $[\cdot, \cdot, \cdot]$ and $(\cdot, \cdot, \cdot)$ are completely contractive and the relations
\begin{equation}\label{eq_move}
(m_1, 1_\S, m_2^*) = (m_1 m_2^*) \cdot 1_\T
\qand
[m_1^*, 1_\S, m_2] = (m_1^* m_2) \cdot 1_\S
\end{equation}
hold for all $m_1, m_2 \in \M$.
\end{definition}

There are some immediate consequences of the $\Delta$-context relations which we record below.

%%%%%%%%%%%%%%%%%%%%%%%%%%%%%%%%
\begin{remark}\label{R:properties}
Let $\big( \S, \T, \M, [\cdot, \cdot, \cdot], (\cdot, \cdot, \cdot) \big)$ be a $\Delta$-context.
Then the following hold for all $s\in \cl S$, $t\in \cl T$ and all $m_i, n_i \in \cl M$, $i = 1,2,3$:
\begin{enumerate}
\item
$[m_1^*,(m_2,[m_3^*,t,n_3],n_2^*), n_1] = 
[m_1^*m_2 m_3^*, t, n_3 n_2^* n_1]$;

\vspace{4pt}

\item
$(m_1,[m_2^*,(m_3, s, n_3^*), n_2],n_1^*) = 
(m_1m_2^* m_3, s,n_3^*n_2n_1^*)$;

\vspace{4pt}

\item
$(m_1, 1_\S, m_2^*) = 1_\T \cdot m_1 m_2^*$;

\vspace{4pt}

\item
$[m_1^*, 1_\T, m_2] = 1_\S \cdot m_1^* m_2$;

\vspace{4pt}

\item
$[m_1^*,(m_2,1_{\cl S},n_2^*),n_1]
= [m_1^*,1_{\cl T}, m_2 n_2^* n_1]
= [m_1^*m_2 n_2^*,1_{\cl T}, n_1]$;

\vspace{4pt}

\item
$(m_1,[m_2^*,1_{\cl T},n_2], n_1^*) 
= (m_1, 1_{\cl S}, m_2^* n_2 n_1^*)
= (m_1 m_2^* n_2, 1_{\cl S}, n_1^*)$.
\end{enumerate}

\noindent 
First we show that 
\begin{equation}\label{eq_bimre}
[m_1^* a, t, b m_2] = [m_1^*, a \cdot t \cdot b, m_2] \ \mbox{ and } \ 
(m_1 c, s, d m_2^*) = (m_1, c \cdot s \cdot d, m_2^*)
\end{equation}
for all $s \in \S$, $t \in \T$, $a, b \in [\M^* \M]$, and $c, d \in [\M \M^*]$.
Indeed, for a contractive approximate identity $(\un{m}_i^* \un{n}_i)_i$ of $[\M^* \M]$ converging to $1_{[\M^* \M]}$ 
we have
\begin{align*}
(m_1, (m_2^* m_3) \cdot s, m_4^*)
& =
(m_1, (m_2^* m_3) \cdot s \cdot 1_{[\M^* \M]} , m_4^*) 
=
\lim_i (m_1, (m_2^* m_3) \cdot s \cdot (\un{m}_i^* \un{n}_i), m_4^*) \\
& =
\lim_i (m_1, [m_2^*, (m_3, s, \un{m}_i^*), \un{n}_i], m_4^*) 
=
\lim_i (m_1 m_2^*) \cdot (m_3, s, \un{m}_i^*) \cdot (\un{n}_i m_4^*) \\
& =
\lim_i (m_1 m_2^* m_3, s, \un{m}_i^* \un{n}_i m_4^*)
=
(m_1 (m_2^* m_3), s, m_4^*),
\end{align*}
as required; the other relations in (\ref{eq_bimre}) are shown similarly.
Items (i), and (ii) are consequences of (\ref{eq_bimre}).  
Items (iii) and (iv) follow after taking adjoints in (\ref{eq_move}), e.g.
\[
1_\T \cdot m_1 m_2^* = (m_2 m_1^* \cdot 1_\T)^* = (m_2, 1_\S, m_1^*)^* = (m_1, 1_\S, m_2^*).
\]
For item (v) we have by using the definition that
\begin{align*}
[m_1^*,(m_2,1_{\cl S},n_2^*),n_1]
& =
(m_1^* m_2) \cdot 1_{\cl S} \cdot (n_2^*n_1) \\
& =
[m_1^*, 1_{\S}, m_2] \cdot (n_2^* n_1) 
=
[m_1^*, 1_{\S}, m_2 n_2^* n_1].
\end{align*}
By taking adjoints we have the symmetrical relation in (v).
Item (vi) follows in a similar way.
\end{remark}

Motivated by Proposition \ref{P:tro os}, we next relax the conditions from Definition \ref{d_DelM}.
This would allow us to reveal a link with non-commutative graph theory \cite{BTW21, DSW13, Sta16}, 
made precise subsequently. 

%%%%%%%%%%%%%%%%%%%%%%%%%%%%%%%%
\begin{definition}\label{D:hom}
Two concrete operator systems $\cl S\subseteq \cl B(H)$ and $\cl T \subseteq \cl B(K)$ are called 
\emph{(concretely) bihomomorphically equivalent} if there exists a (concrete) operator space $\X \subseteq \cl B(H,K) $ such that $\X$ and $\X^*$ are non-degenerate, and 
\[
\X^*\cl T \X \subseteq \ol{\cl S} \qand \X \cl S \X^*\subseteq \ol{\cl T}.
\]
Two abstract operator systems $\cl S$ and $\cl T$ will be called \emph{bihomomorphically equivalent} if there exist Hilbert spaces $H$ and $K$, and unital complete order embeddings $\phi \colon \cl S\to \cl B(H)$ and $\psi \colon \cl T\to \cl B(K)$ such that the concrete operator systems $\phi(\cl S)$ and $\psi(\cl T)$ are bihomomorphically equivalent.
We write $\cl S \leftrightarrows \cl T$ to denote that $\S$ and $\T$ are bihomomorphically equivalent.
\end{definition}

Note that Proposition \ref{P:tro os} asserts that two concrete operator systems $\cl S$ and $\cl T$
are bihomomorphically equivalent if and only if $\cl S \sim_{\rm TRO} \cl T$.

%%%%%%%%%%%%%%%%%%%%%%%%%%%%%%%%
\begin{definition}\label{d_DelMho}
Let $\S$ and $\T$ be abstract operator systems and $\X$ be an abstract operator space.
We say that the quintuple $\big( \S, \T, \X, [\cdot, \cdot, \cdot], (\cdot, \cdot, \cdot) \big)$ is a 
\emph{bihomomorphism pre-context} if:

\begin{itemize}
\item[(i)] $\X$ is non-degenerate;

\item[(ii)] 
$[\cdot,\cdot,\cdot] \colon \X^* \times \T \times \X \longrightarrow \S$ and 
$(\cdot,\cdot,\cdot) \colon \X \times \S \times \X^* \longrightarrow \T$
are completely bounded completely positive maps such that
\[
[\X^*, 1_\T, \X] \subseteq \A_{\S} \qand (\X, 1_\S, \X^*) \subseteq \A_{\T};
\]

\item[(iii)] the associativity relations
\[
[x_1^*, (x_2, s, x_3^*), x_4] = [x_1^*, 1_\T, x_2] \cdot s \cdot [x_3^*, 1_\T, x_4]
\]
and
\[
(x_1, [x_2^*, t, x_3], x_4^*) = (x_1, 1_\S, x_2^*) \cdot t \cdot (x_3, 1_\S, x_4^*)
\]
hold for all $s \in\S, t \in \T$ and all $x_1, x_2, x_3, x_4 \in \X$.
\end{itemize}

A bihomomorphism pre-context is called a \emph{bihomomorphism context} if the trilinear maps $[\cdot,\cdot,\cdot]$ and $(\cdot,\cdot,\cdot)$ are completely contractive and 
there exist semi-units $((\un{x}_i)_i, (\un{y}_i)_i)$ and $((\un{z}_i)_i, (\un{w}_i)_i)$ over $\cl X$ and $\cl X^*$, respectively, such that 
\begin{equation}\label{eq_adjxi}
\lim_i [\un{x}_i^*, 1_\T \otimes I, \un{y}_i] =  1_\S
\ \ \mbox{ and } \ \ 
\lim_i (\un{z}_i, 1_\S \otimes I, \un{w}_i^*) = 1_\T.
\end{equation}
\end{definition}

Recall that by definition the semi-units involved in the biholomorphism context are nets of finitely supported contractive columns; see Definition \ref{D:semiunit}.
Note, in addition, that conditions (\ref{eq_adjxi}) automatically imply
$\lim_i [\un{y}_i^*, 1_\T \otimes I, \un{x}_i] =  1_\S$
and
$\lim_i (\un{w}_i, 1_\S \otimes I, \un{z}_i^*) = 1_\T$, by the positivity of the trilinear maps. 

%%%%%%%%%%%%%%%%%%%%%%%%%%%%%%%%
\begin{remark}\label{R:approx}
Let $\big( \S, \T, \X, [\cdot, \cdot, \cdot], (\cdot, \cdot, \cdot) \big)$ be a bihomomorphism context, and let $((\un{x}_i)_i, (\un{y}_i)_i)$ and $((\un{z}_i)_i, (\un{w}_i)_i)$ be semi-units for $\X$ and $\X^*$, respectively. 
Since $1_\T = 1_{\A_{\T}}$, we have
\[
\lim_i (\un{z}_i, [\un{w}_i^*, t, \un{z}_i], \un{w}_i^*) = \lim_i (\un{z}_i, 1_\S \otimes I, \un{w}_i^*) \cdot t \cdot (\un{z}_i, 1_\T \otimes I, \un{w}_i^*) = t
\]
for all $t \in \T$, and a similar relation holds for $( (\un{x}_i)_i, (\un{y}_i)_i )$ and elements $s \in \S$.
\end{remark}

%%%%%%%%%%%%%%%%%%%%%%%%%%%%%%%%
\begin{theorem}\label{th_ac} 
Let $\cl S$ and $\cl T$ be (abstract) operator systems. 
The following are equivalent:
\begin{enumerate}
\item $\cl S\sim_{\Delta}\cl T$;
\item $\cl S \leftrightarrows \cl T$;
\item there exists a $\Delta$-context for $\cl S$ and $\cl T$;
\item there exists a bihomomorphism context for $\cl S$ and $\cl T$.
\end{enumerate}
\end{theorem}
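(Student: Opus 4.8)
The plan is to prove the cycle $(i)\Rightarrow(ii)\Rightarrow(iv)\Rightarrow(iii)\Rightarrow(i)$, which distributes the work so that each arrow is manageable. The implication $(i)\Rightarrow(ii)$ is essentially immediate: if $\cl S\sim_\Delta\cl T$, then by definition there are unital complete order embeddings $\phi,\psi$ with $\phi(\cl S)\sim_{\rm TRO}\psi(\cl T)$ via a non-degenerate TRO $\cl M$; taking $\cl X:=\cl M$ (which is automatically a non-degenerate operator space, with $\cl X^*$ also non-degenerate since $[\cl M^*\cl M]$ and $[\cl M\cl M^*]$ are unital) witnesses $\cl S\leftrightarrows\cl T$. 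Conversely, the converse direction of that equivalence — that $\cl S\leftrightarrows\cl T$ forces $\cl S\sim_\Delta\cl T$ — is exactly the content of Proposition \ref{P:tro os} transported through the complete order embeddings, so $(ii)$ and $(i)$ are in fact equivalent on the nose; I would still route the proof through the contexts to obtain $(iii)$ and $(iv)$ as well.

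\textbf{From $(ii)$ to a bihomomorphism context.} Assume $\cl S\leftrightarrows\cl T$, so after identifying $\cl S\subseteq\cl B(H)$, $\cl T\subseteq\cl B(K)$ we have a non-degenerate operator space $\cl X\subseteq\cl B(H,K)$ with $\cl X^*$ non-degenerate and $\cl X^*\cl T\cl X\subseteq\ol{\cl S}$, $\cl X\cl S\cl X^*\subseteq\ol{\cl T}$. Here $\cl S$ and $\cl T$ are taken complete by Proposition \ref{P:complete}. I would define the trilinear maps by the concrete formulas $[x_1^*,t,x_2]:=x_1^*tx_2$ and $(x,s,x_1^*):=xsx_1^*$; these land in $\cl S$ and $\cl T$ respectively by hypothesis, are clearly completely contractive and completely positive, and the associativity relations (iii) of Definition \ref{d_DelMho} hold automatically since both sides equal the corresponding product of operators (note $[x_1^*,1_\T,x_2]=x_1^*x_2\in[\cl X^*\cl X]\subseteq\A_{\cl S}$ because $\cl X$ non-degenerate makes $[\cl X^*\cl X]$ a unital C*-subalgebra acting as multipliers, using the discussion in Proposition \ref{P:tro os}). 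For the semi-unit condition (\ref{eq_adjxi}): since $\cl X$ is non-degenerate, $I_H\in[\cl X^*\cl X]$, so there are finitely supported columns $\un{x}_i,\un{y}_i\in C(\cl X)$ with $\un{x}_i^*\un{y}_i\to I_H=1_{\cl S}$, which is precisely $\lim_i[\un{x}_i^*,1_\T\otimes I,\un{y}_i]=1_\S$; normalising to make them contractive is routine. Symmetrically for $\cl X^*$. This gives a bihomomorphism context, so $(ii)\Rightarrow(iv)$.

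\textbf{From a bihomomorphism context to a $\Delta$-context, and back to $(i)$.} Given an abstract bihomomorphism context $\big(\cl S,\cl T,\cl X,[\cdot,\cdot,\cdot],(\cdot,\cdot,\cdot)\big)$, the idea — parallel to passing from a non-degenerate operator space to a TRO in Proposition \ref{P:tro os} — is to manufacture an honest TRO $\cl M$ abstractly. I expect the cleanest route is to first pass to concrete representations: use the completely positive trilinear maps together with Remark \ref{R:approx} (which shows the semi-units recover $\id_{\cl S}$ and $\id_{\cl T}$ pointwise, via a reconstruction analogous to Remark \ref{R:m.a.i.}) to build a completely isometric representation of $\cl S$ on some Hilbert space in which $\cl X$ is concretely realised as an operator space $\cl X_0\subseteq\cl B(H,K)$ satisfying $\cl X_0^*\cl T\cl X_0\subseteq\ol{\cl S}$ and $\cl X_0\cl S\cl X_0^*\subseteq\ol{\cl T}$, i.e. recovering condition $(ii)$; then set $\cl A:=\ca(\cl X_0^*\cl X_0)$ and $\cl M:=[\cl X_0\cl A]$ exactly as in Proposition \ref{P:tro os} to get a $\Delta$-context (checking (i)--(iv) and (\ref{eq_move}) of Definition \ref{d_DelM}, which hold since the context maps become the concrete triple products and the relations (\ref{eq_move}) reduce to $x_1^*x_2\in\cl A$ and $x_1x_2^*$ acting as a multiplier of $\cl T$). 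Finally $(iii)\Rightarrow(i)$: from a $\Delta$-context one represents $\cl S$ and $\cl T$ faithfully so that the TRO $\cl M$ acts concretely between them, uses the associativity relations (iv) and (\ref{eq_move}) together with a contractive approximate identity of $[\cl M^*\cl M]$ (as in the computation in Remark \ref{R:properties}) to verify $\cl M^*\psi(\cl T)\cl M\subseteq\ol{\phi(\cl S)}$ and $\cl M\phi(\cl S)\cl M^*\subseteq\ol{\psi(\cl T)}$, yielding $\phi(\cl S)\sim_{\rm TRO}\psi(\cl T)$ and hence $\cl S\sim_\Delta\cl T$.

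\textbf{Main obstacle.} The genuinely delicate step is the passage from the \emph{abstract} bihomomorphism (or $\Delta$-) context to a concrete spatial situation: one must produce a single Hilbert space on which $\cl S$, $\cl T$, $\cl X$ and the TRO $\cl M$ are simultaneously and compatibly represented, with the abstract trilinear maps turning into honest triple products. I expect this to require a careful linking-object construction — roughly, assembling $\cl S$, $\cl T$ and $\cl X$ (together with $\cl X^*$ and the C*-algebras $[\cl M^*\cl M]$, $[\cl M\cl M^*]$) into one operator system on $H\oplus K$ in the style of a linking algebra, representing it faithfully and completely order isomorphically (invoking injectivity/$\cenv$ machinery from Subsection on TRO envelopes if needed), and then reading off the corners. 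The semi-unit conditions (\ref{eq_adjxi}) and Remark \ref{R:approx} are what guarantee non-degeneracy of the resulting concrete $\cl X$, so they must be threaded through this construction with care; verifying that the abstract complete positivity and the associativity relations are exactly what is needed for the linking object to be an operator system is the crux of the argument.
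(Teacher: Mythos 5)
Your treatment of (i)$\Leftrightarrow$(ii) and of extracting contexts from a concrete TRO-equivalence agrees with the paper, but there is a genuine gap at the heart of your cycle: the passage from an \emph{abstract} bihomomorphism (or $\Delta$-) context back to anything concrete. In your step (iv)$\Rightarrow$(iii) (and again in (iii)$\Rightarrow$(i)) you assume one can ``build a completely isometric representation of $\cl S$ on some Hilbert space in which $\cl X$ is concretely realised'', deferring this to a linking-object construction on $H\oplus K$. As stated this is circular: abstractly you may represent $\cl S$ on some $H$ and $\cl T$ on some $K$, but no action of $\cl X$ between these spaces is given, and the representation of $\cl T$ you start from is in general the wrong one; producing a compatible $K$ together with $\theta\colon \cl X\to\cl B(H,K)$ is precisely what must be proved, and endowing the array $\left[\begin{smallmatrix} \cl S & \cl X^* \\ \cl X & \cl T \end{smallmatrix}\right]$ with an operator system structure from the abstract data is not easier than the theorem itself. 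The paper resolves this with an explicit GNS-type induced representation: complete $\cl X\odot H$ under $\langle x_1\otimes h_1,x_2\otimes h_2\rangle=\langle\phi([x_2^*,1_\T,x_1])h_1,h_2\rangle$, set $\theta(x)h=x\otimes h$, define $\psi(t)$ through the middle variable (boundedness uses $0\le t\le\|t\|1_\T$ and complete positivity), iterate to obtain $U_H\colon \cl X^*\wt\otimes(\cl X\wt\otimes H)\to H$ --- whose surjectivity is exactly where the semi-unit identity (\ref{eq_adjxi}) enters, see (\ref{eq_hlim}) --- and then verify $\theta(\cl X)^*\psi(\cl T)\theta(\cl X)\subseteq\phi(\cl S)$, $\theta(\cl X)\phi(\cl S)\theta(\cl X)^*\subseteq\psi(\cl T)$, that $\psi$ is completely isometric (Remark \ref{R:approx} plus contractivity of the semi-units), and that $\theta(\cl X)$ is non-degenerate. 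Without this (or an equivalent) construction your arrows (iv)$\Rightarrow$(iii) and (iii)$\Rightarrow$(i) are unproved; the paper establishes (iv)$\Rightarrow$(ii) and (iii)$\Rightarrow$(i) by exactly this machinery and gets (i)$\Rightarrow$(iii),(iv) from Proposition \ref{P:reduction} by reading the triple products off the concrete TRO-equivalence.

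A secondary issue: in (ii)$\Rightarrow$(iv) you need \emph{contractive} finitely supported columns with $\un{x}_i^*\un{y}_i\to I_H$ (this contractivity is what makes $\psi$ isometric in the reverse direction). Non-degeneracy only gives $I_H\in[\cl X^*\cl X]$, i.e.\ approximation by finite sums $\sum_k x_k^*y_k$ with no control on $\|\un{x}\|\,\|\un{y}\|$, and your rescaling remark cannot change the product of these two norms, so ``normalising is routine'' does not hold for a general non-degenerate $\cl X$. The repair is to first replace $\cl X$ by the TRO $\cl M=[\cl X\,\mathrm{C}^*(\cl X^*\cl X)]$ of Proposition \ref{P:tro os}: for a non-degenerate TRO the standard approximate-unit columns $\un{m}_\mu$ are contractive and satisfy $\un{m}_\mu^*\un{m}_\mu\to I_H$ in norm, which is how the paper manufactures its semi-units in the (i)$\Rightarrow$(iv) step.
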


\begin{proof}
Without loss of generality, we may assume that $\S$ and $\T$ are complete operator systems.
Indeed, for items (i) and (ii) this is derived from Proposition \ref{P:complete}.
For items (iii) and (iv) we note that if $\S$ and $\T$ admit a $\Delta$- or a bihomomorphic context then so do their completions by extension of the continuous trilinear maps.

\smallskip

The equivalence [(i)$\Leftrightarrow$(ii)] is shown in Proposition \ref{P:tro os}.

\smallskip

For the implication [(iv)$\Rightarrow$(ii)] we adapt a well-known Morita equivalence machinery.
Fix a unital complete order embedding $\phi \colon \S \to \I(\S) \subseteq \B(H)$ in the injective envelope of $\S$.
Let $((\un{x}_i)_i, (\un{y}_i)_i)$ and $((\un{z}_i)_i, (\un{w}_i)_i)$ be semi-units for $\X$ and $\X^*$, respectively, each of whose elements have finite support of length $n_i$. 
Equip the algebraic tensor product $\X \odot H$ with the sesquilinear form given by 
\[
\sca{x_1 \otimes h_1, x_2 \otimes h_2} := \sca{\phi( [x_2^*, 1_\T, x_1] ) h_1, h_2}_H.
\]
We write $\X \wt{\otimes} H$ for the Hausdorff completion of $\X \odot H$, 
obtained by passing to the quotient with respect to the kernel of $\sca{\cdot, \cdot}$ and completing, 
and set $K := \X \wt{\otimes} H$ for brevity and suppress the notation for the quotient map. 
The form $\sca{\cdot, \cdot}$ yields an inner product on $K$, which will be denoted by $\sca{\cdot, \cdot}_K$. 
For $x \in \X$, let $\theta(x) \colon H\to K$ be the map given by $\theta(x)(h) = x \otimes h$, and note that 
\begin{align}\label{eq_nthet}
\left\| \left(\theta(x_{i,j})\right)_{i,j} \begin{bmatrix} h_1 \\ \vdots \\ h_n \end{bmatrix} \right\|_{K^{(n)}}^2
& =
\left\| \left(\sum_{j=1}^n x_{i,j} \otimes h_j\right)_{i=1}^n\right\|_{K^{(n)}}^2  \\\nonumber
& =
\left\langle \phi^{(n)}\left( \left( \sum_{k=1}^n [x_{k,i}^*, 1_\T, x_{k,j}] \right)_{i,j} \right) \begin{bmatrix} h_1 \\ \vdots \\ h_n \end{bmatrix}, \begin{bmatrix} h_1 \\ \vdots \\ h_n \end{bmatrix} \right\rangle_{H^{(n)}} \\ \nonumber
& = 
\left\| \phi^{(n)}\left( \left(\sum_{k=1}^n [x_{k,i}^*, 1_\T, x_{k,j}] \right)_{i,j} \right)^{1/2} \begin{bmatrix} h_1 \\ \vdots \\ h_n \end{bmatrix} \right\|_{H^{(n)}}^2. \nonumber
\end{align}
Since $\phi$ is completely isometric and the trilinear maps are completely contractive,
\begin{align*}
\left\| \left(\theta(x_{i,j}) \right)_{i,j}\right \|^2
& =
\left\| \phi^{(n)}\left( \left( \sum_{k=1}^n [x_{k,i}^*, 1_\T, x_{k,j}] \right)_{i,j}\right)^{1/2} \right\|^2\\
& \leq
\left\| \left(\sum_{k=1}^n [x_{k,i}^*, 1_\T, x_{k,j}]\right)_{i,j} \right\| 
\leq
\left\| \left(x_{i,j}\right)_{i,j}\right\|^2.
\end{align*}
Therefore $\theta$ defines a completely contractive map
$\theta \colon \X \rightarrow \B(H,K)$,
and a straightforward calculation shows that
\begin{equation}\label{eq_xst}
\theta(x)^* (x' \otimes h') = \phi( [x^*, 1_\T, x'] ) h' \ \foral x,x'\in \cl X, h'\in H.
\end{equation}

Next we define a unital completely positive map $\psi \colon \T \to \B(K)$ such that
\begin{equation}\label{eq_psi(t)}
\sca{\psi(t) (x_1 \otimes h_1), x_2 \otimes h_2}_K = \sca{ \phi( [x_2^*, t, x_1] ) h_1, h_2}_H.
\end{equation}
Towards this end first we consider a positive element $t \in \T$ and define the sesquilinear form
\[
\sca{\cdot, \cdot}_t \colon \X \odot H \times \X \odot H \to \bC
\]
given by
\[
\sca{x_1 \otimes h_1, x_2 \otimes h_2}_t := \sca{ \phi( [x_2^*, t, x_1] ) h_1, h_2}_H.
\]
Since $t \in \T^+$ it follows that $\sca{\cdot, \cdot}_t$ is positive semidefinite.
Moreover, since $0 \leq t \leq \|t\| 1$ and the trilinear map $[\cdot,\cdot,\cdot]$ is completely positive, 
for $u = \sum_{k=1}^n x_k \otimes h_k \in \X \odot H$ we have
\begin{align*}
\sca{u, u}_t
& =
\left\langle \phi^{(n)} \bigg( \big[\begin{bmatrix} x_1^* & \cdots & 0 \\ \vdots & & \vdots \\ x_n^* & \cdots & 0 \end{bmatrix}, t \otimes I_n, \begin{bmatrix} x_1 & \cdots & x_n \\ \vdots & & \vdots \\ 0 & \cdots & 0 \end{bmatrix} \big] \bigg) 
\begin{bmatrix} h_1 \\ \vdots \\ h_n \end{bmatrix}, \begin{bmatrix} h_1 \\ \vdots \\ h_n \end{bmatrix} \right\rangle_{H^{(n)}} \\
& \leq 
\|t\|
\left\langle \phi^{(n)} \bigg( \big[\begin{bmatrix} x_1^* & \cdots & 0 \\ \vdots & & \vdots \\ x_n^* & \cdots & 0 \end{bmatrix}, 1_\T \otimes I_n, \begin{bmatrix} x_1 & \cdots & x_n \\ \vdots & & \vdots \\ 0 & \cdots & 0 \end{bmatrix} \big] \bigg) 
\begin{bmatrix} h_1 \\ \vdots \\ h_n \end{bmatrix}, \begin{bmatrix} h_1 \\ \vdots \\ h_n \end{bmatrix} \right\rangle_{H^{(n)}} \\ 
& =
\|t\| \sca{u,u}.
\end{align*}
In particular if $\sca{u,u} = 0$ then $\sca{u,u}_t = 0$ and thus we obtain an induced sesquilinear form (denoted in the same way $\sca{\cdot, \cdot}_t$) by passing to quotients.
In particular this form satisfies
\[
\sca{u,u}_t \leq \|t\| \cdot \sca{u,u}_K \foral u \in K.
\]
By the Cauchy-Schwarz inequality for positive sesquilinear forms we get that 
\[
|\sca{u,v}_t| \leq \sca{u,u}^{1/2}_t  \sca{v,v}^{1/2}_t
\leq \|t\| \sca{u,u}_K^{1/2} \sca{v,v}_K^{1/2}
= \|t\| \|u\|_K \|v\|_K,
\]
and thus the Riesz representation Theorem induces a bounded linear map $\psi(t) \in \B(K)$.
Since the positive elements span $\T$, we derive a well-defined linear map $\psi(t)$, $t \in \T$,
satisfying (\ref{eq_psi(t)}); note that the map $\psi \colon \cl T\to \B(K)$ is linear and unital. 
Similar computations show that the map $\psi$ is completely contractive and completely positive.

Let 
$L = \X^* \wt{\otimes} K \equiv \X^* \wt{\otimes} (\X \wt{\otimes} H)$. 
Applying the construction from the previous paragraphs to $\psi$, we obtain a completely contractive map
$\theta' \colon \X^* \rightarrow \B(K,L)$
and a unital completely positive map $\phi' \colon \S \rightarrow \B(L)$.
Note that 
\begin{equation}\label{eq_phi'}
\sca{ \phi'(s) (x_1^* \otimes y_1 \otimes h_1), x_2^* \otimes y_2 \otimes h_2}_L
=
\sca{ \phi( [y_2^*, (x_2, s, x_1^*), y_1] ) h_1, h_2}_H,
\end{equation}
for all $x_i, y_i\in \cl X$, $s\in \cl S$, $h_i\in H$, $i = 1,2$. 
Using the fact that $\phi$ is a unital 
$\A_{\S}$-bimodule map, for $x_i, y_i\in \cl X$, $h_i\in H$, $i = 1,2$, we have 
\begin{align*}
\sca{x_1^* \otimes y_1 \otimes h_1, x_2^* \otimes y_2 \otimes h_2}_L
& =
\sca{ \psi((x_2, 1_\S, x_1^*))(y_1 \otimes h_1), y_2 \otimes h_2}_K \\
& =
\sca{ \phi([y_2^*, (x_2, 1_\T, x_1^*), y_1]) h_1, h_2 }_H \\
& =
\sca{ \phi( [y_2^*, 1_\T, x_2] \cdot 1_\S \cdot [x_1^*, 1_\T, y_1] ) h_1, h_2}_H \\
& =
\sca{ \phi( [x_1^*, 1_\T, y_1] ) h_1, \phi( [x_2^*, 1_\T, y_2] ) h_2 }_H.
\end{align*}
It follows that the map
\[
U_H \colon L \rightarrow H; \ x^* \otimes y \otimes h \mapsto \phi( [x^*, 1_\T, y] ) h,
\]
is well-defined and isometric.
Furthermore, $U_H$ is onto $H$; indeed for $h \in H$ we have that
\begin{equation}\label{eq_hlim}
h = \phi(1_\S) h = \lim_i \phi( [\un{x}_i^*, 1_\T \otimes I, \un{y}_i] )h 
= \lim_i \sum_{k=1}^{n_i} U_H(x_{i,k}^* \otimes y_{i,k} \otimes h).
\end{equation}
We will show that
\begin{equation}\label{eq_UH}
U_H \phi'(s) = \phi(s) U_H \foral s \in \S
\qand
U_H \theta'(x^*) = \theta(x)^* \foral x \in \X.
\end{equation}
Indeed, using (\ref{eq_phi'}) and (\ref{eq_hlim}), we have 
\begin{align*}
\sca{U_H \phi'(s) (x_1^* \otimes x_2 \otimes h_2), h_1}_H
& =
\lim_i \sum_{k=1}^{n_i} \sca{ \phi'(s) (x_1^* \otimes x_2 \otimes h_2), x_{i,k}^* \otimes y_{i,k} \otimes h_1}_L \\
& =
\lim_i \sca{ \phi( [\un{y}_i^*, (\un{x}_i, s, x_1^*), x_2] ) h_2, h_1}_H \\
& = 
\lim_i \sca{ \phi( [\un{y}_i^*, 1_\T \otimes I, \un{x}_i] \cdot s \cdot [x_1^*, 1_\T, x_2] ) h_2, h_1}_H \\
& = 
\sca{ \phi(1_{\A_{\S}} \cdot s) \phi( [x_1^*, 1_\T, x_2] ) h_2, h_1}_H \\
& = 
\sca{\phi(s) U_H (x_1^* \otimes x_2 \otimes h_2), h_1}_H.
\end{align*}
With regard to $\theta$ and $\theta'$, a direct observation using (\ref{eq_xst}) gives 
\[
U_H \theta'(x^*) (x' \otimes h) = U_H (x^* \otimes x' \otimes h) = \phi( [x^*, 1_\T, x'] ) h = \theta(x)^* (x' \otimes h).
\]

To complete the proof of this implication, we will show that $\psi$ is completely isometric, that $\theta(\X)$ is non-degenerate, and that $\phi(\S)$ and $\psi(\T)$ are bihomomorphically equivalent via $\theta(\X)$.
First note that 
\begin{align*}
\sca{\theta(x_2)^* \psi(t) \theta(x_1) h_1, h_2}_H
& =
\sca{\psi(t) (x_1 \otimes h_1), x_2 \otimes h_2}_K
=
\sca{ \phi( [x_2^*, t, x_1] ) h_1, h_2 }_H,
\end{align*}
implying 
\[
\theta(x_2)^* \psi(t) \theta(x_1) = \phi( [x_2^*, t, x_1] ) \foral x_1, x_2 \in \X, t \in \T;
\]
thus, $\theta(\X)^* \psi(\T) \theta(\X) \subseteq \phi(\S)$.
A similar computation applied to $\theta'$ and $\phi'$ shows that
\[
\theta'(\X^*) \phi'(\S) \theta'(\X) \subseteq \psi(\T),
\]
and therefore
\[
\theta(\X) \phi(\S) \theta(\X)^* 
= 
\theta'(\X^*)^* U_H^* U_H \phi'(\S) 
U_H^* U_H \theta'(\X^*)
=
\theta'(\X^*)^* \phi'(\S) \theta'(\X^*) \subseteq \psi(\T).
\]
In order to show that $\psi$ is completely isometric, we use Remark \ref{R:approx} for $\T$ and the fact that $\phi$ is completely isometric to obtain
\begin{align*}
\|t\|
& =
\lim_i \| (\un{z}_i, [\un{w}_i^*, t, \un{z}_i], \un{w}_i^*) \|
\leq
\limsup_i \| [\un{w}_i^*, t, \un{z}_i] \| \\
& =
\limsup_i \| \phi( [\un{w}_i^*, t, \un{z}_i] ) \|
=
\limsup_i \| \theta^{(n_i)}(\un{w}_i)^* \psi(t) \theta^{(n_i)}(\un{z}_i) \|
\leq
\| \psi(t) \|,
\end{align*}
where we have also used that the semi-units and the trilinear maps are contractive.
A similar argument works for all matrix norms.
Finally, for the non-degeneracy of $\theta(\X)$, we notice that 
\begin{align*}
\lim_i \theta^{(n_i)}(\un{x}_i)^* \theta^{(n_i)}(\un{y}_i) 
& = 
\lim_i \theta^{(n_i)}(\un{x}_i)^* \left(\psi(1_\T) \otimes I_{n_i}\right) \theta^{(n_i)}(\un{y}_i) \\
& = 
\lim_i \phi( [\un{x}_i^*, 1_\T \otimes I, \un{y}_i] ) = \phi(1_\S) = I_H,
\end{align*}
in norm, where we used the unitality of $\psi$ and $\phi$.

\smallskip

The implication [(iii)$\Rightarrow$(i)] follows in a similar way to the implication [(iv) $\Rightarrow$ (ii)].
More precisely, let $(H,\phi,\pi)$ be a C*-representation of the operator $[\M^* \M]$-system $\cl S$.
Since
\[
\phi( [m_1^*, 1_{\cl T}, m_2] ) = \phi( m_1^* m_2 \cdot 1_\T ) = \pi(m_1^* m_2) \phi(1_\T) = \pi(m_1^* m_2)
\]
whenever $m_1, m_2 \in \M$, 
the construction used to show the implication [(iv) $\Rightarrow$ (ii)] gives that
\[
\M \wt{\otimes} H \simeq \M \otimes_{[\M^* \M]} H.
\]
The proof then proceeds in the same way by recalling that, for a non-degenerate TRO $\M$,
we have $\M^* \otimes_{[\M \M^*]} \otimes \M \otimes_{[\M^* \M] } H \simeq H$.

\smallskip

For the implication [(i)$\Rightarrow$(iii)], let $\phi$ and $\psi$ be unital complete order embeddings as in Proposition \ref{P:reduction} such that the concrete operator systems $\phi(\S)$ and $\psi(\T)$ are TRO-equivalent via $\M$ with $\phi(\A_\S) = [\M^* \M]$ and $\psi(\A_\T) = [\M \M^*]$.
Moreover
\[
\ca(\phi(\S)) \simeq \cenv(\S) \qand \ca(\psi(\T)) \simeq \cenv(\T),
\]
and thus $\phi$ (resp. $\psi$) is an $\A_\S$-bimodule map (resp. $\A_\T$-bimodule map).
The maps $\phi$ and $\psi$ induce bimodule structure over $[\M^* \M]$ and $[\M \M^*]$ in the sense that
\[
m_1^* m_2 \cdot s := \phi^{-1}(m_1^* m_2) \cdot s
\qand
m_1 m_2^* \cdot t := \psi^{-1}(m_1 m_2^*) \cdot t,
\]
for all $m_1, m_2 \in \M$, $s \in \S$ and $t \in \T$, where the multiplication on the right hand sides is given by the multiplier algebras.
Set 
\[
[m_1^*, t, m_2] := \phi^{-1}( m_1^* \psi(t) m_2 )
\qand 
(m_3, s, m_4^*) := \psi^{-1}( m_3 \phi(s) m_4^*),
\]
where $s \in \S$, $t \in \T$ and $m_1, m_2, m_3, m_4 \in \X$; thus, the trilinear maps
$[\cdot,\cdot,\cdot] \colon \M^* \times \T \times \M \longrightarrow \S$ and 
$(\cdot,\cdot,\cdot) \colon \M \times \S \times \M^* \longrightarrow \T$ are 
completely bounded and completely positive, and moreover modular over $[\M^* \M]$ and $[\M \M^*]$ on the outer variables.
This shows item (iii) of Definition \ref{d_DelM}.
A routine computation gives item (iv) of Definition \ref{d_DelM}, and unitality of $\phi$ and $\psi$ gives that $\big( \S, \T, \M, [\cdot, \cdot, \cdot], (\cdot, \cdot, \cdot) \big)$ is a $\Delta$-context.
Indeed for the latter we have that
\[
(m_1, 1_\S, m_2^*) = \psi^{-1}( m_1 \phi(1_\S) m_2^* ) = \psi^{-1}(m_1 m_2^*) = \psi^{-1}(m_1 m_2^*) \cdot 1_\T = m_1 m_2^* \cdot 1_\T.
\]

\smallskip

The implication [(i)$\Rightarrow$(iv)] follows in the same way for the same trilinear maps given as above by the maps $\phi$ and $\psi$ of Proposition \ref{P:reduction}.
Indeed, unitality of $\phi$ and $\psi$ and that $\phi(\A_\S) = [\M^* \M]$ and $\psi(\A_\T) = [\M \M^*]$ yield item (ii) of Definition \ref{d_DelMho}, while item (iv) follows from the fact that $\phi$ and $\psi$ are bimodule maps. 
The unitality of $\phi$ and $\psi$ and non-degeneracy of $\M$ now imply that $\big( \S, \T, \M, [\cdot, \cdot, \cdot], (\cdot, \cdot, \cdot) \big)$ is a bihomomorphism context.
Note here that by construction the TRO $\M$ is non-degenerate as $[\M^* \M]$ and $[\M \M^*]$ are unital C*-algebras.
\end{proof}

%%%%%%%%%%%%%%%%%%%%%%%%%%%%%%%%
\section{Categorical implications}\label{ss_ci}
%%%%%%%%%%%%%%%%%%%%%%%%%%%%%%%%

In this section, we obtain consequences of $\Delta$-equivalence for categories of representations of operator systems. 
We consider two categories, which have the same objects but different morphisms. 
Let $\cl S$ be an operator system. 
A \emph{C*-representation} of $\cl S$ is a pair $(H,\phi)$, where $H$ is a Hilbert space and $\phi \colon \cl S\to \cl B(H)$ is a unital completely positive map whose restriction $\pi_{\phi}$ to $\cl A_{\cl S}$ is a *-homomorphism. 
By \cite[Paragraph 1.3.12]{blm}, combined with Arveson's Extension Theorem, 
\[
\phi(a\cdot s) = \pi_{\phi}(a)\phi(s) \foral s\in \cl S, a\in \cl A_{\cl S},
\]
that is, the triple $(H,\phi,\pi_{\phi})$ is a C*-representation of the operator $\cl A_{\cl S}$-system $\cl S$. 
When we want to emphasise the *-homomorphism $\pi = \pi_{\phi}$, we will write $(H,\phi,\pi)$ in the place of $(H,\phi)$. 
Two C*-representations $(H_1,\phi_1,\pi_1)$ and $(H_2,\phi_2,\pi_2)$ are called \emph{unitarily equivalent} if there exists a unitary operator $U \colon H_1\to H_2$ such that $U\phi_1(s) = \phi_2(s)U$, $s\in \cl S$. 
An object of the category ${\rm Rep}_{{\rm C^*}}(\cl S)$ is a unitary equivalence class of a C*-representation of $\cl S$.
We will identify an object of ${\rm Rep}_{{\rm C^*}}(\cl S)$ with a particular representative from its class without further mention. 

Given two objects $\Gamma_1 = (H_1,\phi_1, \pi_1)$ and $\Gamma_2 = (H_2,\phi_2, \pi_2)$ of ${\rm Rep}_{\rm C^*}(\cl S)$, a morphism from $\Gamma_1$ to $\Gamma_2$ is a completely bounded map 
$\Phi \colon \phi_1(\cl S)\rightarrow \phi_2(\cl S)$ satisfying the relations
\[
\Phi(\phi_1(a\cdot s \cdot b)) = \Phi(\pi_1(a) \phi_1(s)\pi_1(b)) = \pi_2(a)\Phi(\phi_1(s))\pi_2(b) \foral a, b \in \cl A_{\cl S}, s \in \cl S.
\]
We denote the set of such morphisms by ${\rm Hom}_{\cl S}(\Gamma_1, \Gamma_2)$, and note that ${\rm Rep}_{\rm C^*}(\cl S)$ is an additive category.
A functor $\cl F \colon {\rm Rep}_{\rm C^*}(\cl S) \to {\rm Rep}_{\rm C^*}(\cl T)$ is called \emph{completely contractive} 
if $\|\cl F(\Phi)\|_{\rm cb}\leq \|\Phi\|_{\rm cb}$ for every morphism $\Phi$ of ${\rm Rep}_{\rm C^*}(\cl S)$ 
(see \cite{Ble01ms}). 

Suppose that $\S \sim_{\Delta} \T$ and $\cl M$ is a TRO as in Proposition \ref{P:reduction}. 
Theorem \ref{th_ac} and Remark \ref{R:ind rep}
yield a completely contractive functor $\cl F \colon {\rm Rep}_{\rm C^*}(\cl S)\to {\rm Rep}_{\rm C^*}(\cl T)$, in the following way.
For an object $(H_\phi, \phi, \pi)$  from ${\rm Rep}_{\rm C^*}(\S)$ let $H_{\psi}$ be the completion of $\cl M\otimes_{\cl A_{\cl S}} H$ with respect to the semi-inner product given by $\langle a\otimes\xi,b\otimes \eta \rangle = \langle \phi(b^*a)\xi, \eta \rangle$ and $\psi$ be given by letting
\[
\left\langle \psi(t)(a\otimes\xi),b\otimes \eta \right\rangle = 
\left\langle \phi(b^* \hspace{-0.1cm} \cdot\hspace{-0.05cm}  t
\hspace{-0.05cm} \cdot \hspace{-0.05cm} a)\xi,\eta\right\rangle.
\]
By the proof of Theorem \ref{th_ac}, $(H_\psi, \psi)$ is an object of ${\rm Rep}_{\rm C^*}(\T)$; 
write $\sigma := \pi_{\psi} = \psi|_{\cl A_{\cl T}}$, and set $\cl F\left((H_\phi, \phi, \pi)\right) = (H_\psi, \psi, \si)$.
Note that
\begin{equation}\label{eq_psiphi}
\psi(m \cdot s \cdot n^*)(l\otimes\xi) = m\otimes\phi(s\cdot n^*l)(\xi) \ \foral m,n,l\in \cl M, s\in \cl S,
\end{equation}
and that the map $\nu \colon \cl M\rightarrow \cl B(H,K)$,
given by 
\[
\nu(m)(\xi) = m \otimes\xi \ \foral m\in\cl M, \xi\in H,
\]
satisfies the identity 
$$\nu(m)^*(l\otimes\xi) = \phi(m^*l)(\xi), \ \ \ m,l\in \cl M, \xi\in H,$$
implying that $\nu$ is a ternary morphism. 
We also note the relations 
\begin{enumerate}
%\item , 
\item $\psi(m \cdot s \cdot n^*) = \nu(m)\phi(s)\nu(n)^*$,
\item $\pi(l^*n) = \nu(l)^*\nu(n)$, and 
\item $\phi(m^* \cdot t \cdot n) = \nu(m)^*\psi(t)\nu(n)$,
\end{enumerate}
for all $m,n, l \in \cl M$, $s\in \cl S$, and $t\in \cl T$.
Thus the *-homomorphism $\sigma \colon \cl A_{\cl T}\rightarrow \cl B(K)$ satisfies the identities
\[
\sigma(mn^*) = \nu(m)\nu(n)^* \foral m,n \in \cl M
\]
and 
\[
\psi(\cl T) = [\nu(\cl M)\phi(\cl S)\nu (\cl M)^*], \;\; \phi(\cl S) = [\nu(\cl M)^*\psi(\cl T)\nu(\cl M)],
\]
\[
\pi(\cl A_{\cl S}) = [ \nu (\cl M)^*\nu(\cl M)],\;\;\sigma(\cl A_{\cl T}) = [ \nu(\cl M)\nu(\cl M)^*].
\]

Let $(H_i, \phi_i, \pi_i), i=1,2$ be objects of the category ${\rm Rep}_{\rm C^*}(\cl S)$ and $\Phi \colon \phi_1(\cl S)\rightarrow \phi_2(\cl S)$ be a morphism. Write $\nu_i$ and $\sigma_i$ for the 
corresponding maps, arising from $(H_i, \phi_i, \pi_i)$, $i = 1,2$. 
Set $(K_i, \psi_i, \pi_i) = \cl F((H_i,\phi_i,\si_i))$, $i=1,2$.
Let $n_i,m_i \in \cl M, s_i \in \cl S$, $i = 1,\dots,k$. 
There exists a contractive approximate identity of $[\cl M\cl M^*]$ of the form $(q_\lambda q_\lambda^*)_{\lambda}$ where $q_\lambda$ is a finite row over $\cl M$ for every $\lambda$.  
Suppressing the notation for the ampliation of $\nu_1$, we note that, for $\epsilon > 0$, there exists $q = q_{\lambda_0}$ such that 
\begin{align*}
\nor{\sum_{i=1}^k\nu_2(n_i)\Phi \left(\phi_1(s_i)\right)\nu_2(m_i)^*}-\epsilon
& \leq 
\nor{\sum_{i=1}^k\nu_2(q)\nu_2(q)^*\nu_2(n_i)\Phi \left(\phi_1(s_i)\right)\nu_2(m_i)^*\nu_2(q)\nu_2(q)^*}\\
& \leq 
\nor{\Phi\left(\nu_1(q)^*\sum_{i=1}^k\nu_1(n_i)\phi_1(s_i)\nu_1(m_i)^*\nu_1(q)\right)}\\
& \leq 
\|\Phi\|_{\rm cb} \nor{\nu_1(q)^*\left(\sum_{i=1}^k\nu_1(n_i)\phi_1(s_i)\nu_1(m_i)^*\right)\nu_1(q)}\\
& \leq 
\|\Phi\|_{\rm cb} \nor{\sum_{i=1}^k\nu_1(n_i)\phi_1(s_i)\nu_1(m_i)^*}.
\end{align*}
Therefore 
\[
\nor{\sum_{i=1}^k\nu_2(n_i)\Phi (\phi_1(s_i))\nu_2(m_i)^*}
\leq 
\|\Phi\|_{\rm cb} \nor{\sum_{i=1}^k\nu_1(n_i)\phi_1(s_i)\nu_1(m_i)^*}.
\]
We conclude that the linear map 
\[
\Psi \colon \psi_1(\cl T)\rightarrow \psi_2(\cl T); \ \nu_1(n)\phi_1(s)\nu_1(m)^*\mapsto \nu_2(n)\Phi(\phi_1(s))\nu_2(m)^*,
\]
is well-defined, bounded, and $\|\Psi\| \leq \|\Phi\|_{\rm cb}$.
Similarly $\Psi$ is completely bounded and $\|\Psi\|_{\rm cb}\leq\|\Phi\|_{\rm cb}$.

We set $\cl F(\Phi) := \Psi$ as defined above, and note that $\cl F(\Phi)$ is unital when $\Phi$ is unital, and completely positive when $\Phi$ is completely positive.
If $m,l,n,k\in \cl M$ and $s\in \cl S$ then 
\begin{align*}
\Psi(\psi_1((ml^*n) \cdot s \cdot k^*))
& = 
\Psi(\nu_1(ml^*n)\phi_1(s)\nu_1(k)^*)
= 
\nu_2(ml^*n)\Phi(\phi_1(s))\nu_2(k)^*\\
& =  
\sigma_2(ml^*)\nu_2(n)\Phi(\phi_1(s))\nu_2(k)^*
=  
\sigma_2(ml^*)\Psi(\nu_1(n)\phi_1(s)\nu_1(k)^*)\\
& =   
\sigma_2(ml^*)\Psi(\psi_1(n \cdot s \cdot k^*)).
\end{align*}
Thus,
\[
\Psi(\psi_1(bt))=\sigma_2(b)\Psi(\psi_1(t)) \foral b\in  \cl A_{\cl T}, t\in \cl T.
\]
Similarly,
\[
\Psi(\psi_1(tb))=\Psi(\psi_1(t))\sigma_2(b) \foral b \in \cl A_{\cl T},  t\in \cl T,
\]
and hence $\Psi\in {\rm Hom}\left((K_1, \psi_1, \si_1), (K_2, \psi_2, \si_2)\right)$.

We have thus defined a completely contractive functor $\cl F \colon {\rm Rep}_{\rm C^*}(\cl S) \rightarrow {\rm Rep}_{\rm C^*}(\cl T)$.
Let $\cl G \colon {\rm Rep}_{\rm C^*}(\cl T) \rightarrow {\rm Rep}_{\rm C^*}(\cl S)$ be the completely contractive 
functor arising from the adjoint $\cl M^*$ of $\cl M$ in an analogous way.  

%%%%%%%%%%%%%%%%%%%%%%%%%%%%%%%%
\begin{theorem}\label{B0} 
Let $\cl S, \cl T$ be operator systems with $\cl S \sim_\Delta \cl T$. 
Then 
\[
\cl G \circ \cl F\cong {\rm Id}_{{\rm Rep}_{\rm C^*}(\cl S)} 
\qand
\cl F \circ \cl G\cong {\rm Id}_{{\rm Rep}_{\rm C^*}(\cl T)},
\]
up to natural equivalence.  
In particular, the categories ${\rm Rep}_{\rm C^*}(\cl S)$ and ${\rm Rep}_{\rm C^*}(\cl T)$ are equivalent via completely contractive functors.
\end{theorem}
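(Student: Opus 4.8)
The plan is to show that the ternary module $\cl M$ supplies not only the functors $\cl F,\cl G$ but also the unit/counit of an adjoint equivalence. Concretely, I would attach to each object $\Gamma$ of ${\rm Rep}_{\rm C^*}(\cl S)$ a unitary $U_\Gamma$ implementing a unitary equivalence between $\cl G\circ\cl F(\Gamma)$ and $\Gamma$, then verify that conjugation by the $U_\Gamma$ is natural in $\Gamma$, and finally invoke symmetry for $\cl F\circ\cl G$. The concluding sentence of the theorem is then automatic, since $\cl F$ and $\cl G$ have already been shown to be completely contractive functors.

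First I would unwind the composition $\cl G\circ\cl F$. Fix $\cl M$ as in Proposition \ref{P:reduction}, so $\phi(\cl A_\S)=[\cl M^*\cl M]$ and $\psi(\cl A_\T)=[\cl M\cl M^*]$, and let $\nu$ (resp. $\mu$) be the ternary morphism on $\cl M$ (resp. on $\cl M^*$) associated with $\cl F$ (resp. with $\cl G$) as in the discussion preceding the statement. For an object $\Gamma=(H,\phi,\pi)$, applying $\cl F$ gives a representation on $K=\cl M\otimes_{\cl A_\S}H$ and then applying $\cl G$ gives one on $L=\cl M^*\otimes_{\cl A_\T}(\cl M\otimes_{\cl A_\S}H)$; matching this two-stage GNS-type construction with the one in the proof of Theorem \ref{th_ac} (the [(iv)$\Rightarrow$(ii)]/[(iii)$\Rightarrow$(i)] constructions, taking $\cl X=\cl M$), one identifies $\cl G\circ\cl F(\Gamma)$ with the object $(L,\phi'',\pi'')$ built there. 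That proof produces a surjective isometry
\[
U_\Gamma\colon L\longrightarrow H,\qquad m^*\otimes n\otimes h\longmapsto \phi(m^*n)h=\pi(m^*n)h,
\]
which is the operator-system form of Rieffel's collapse $\cl M^*\otimes_{[\cl M\cl M^*]}\cl M\otimes_{[\cl M^*\cl M]}H\simeq H$, and which satisfies $U_\Gamma\phi''(s)=\phi(s)U_\Gamma$ for all $s\in\cl S$ and $U_\Gamma\mu(m^*)=\nu(m)^*$ for all $m\in\cl M$. Since $U_\Gamma$ intertwines $\phi''$ and $\phi$ on all of $\cl S$ (in particular on $\cl A_\S$), the map $\eta_\Gamma:=\mathrm{Ad}(U_\Gamma)\colon\phi''(\cl S)\to\phi(\cl S)$ is a completely isometric isomorphism compatible with the $\cl A_\S$-module actions, hence an isomorphism $\cl G\circ\cl F(\Gamma)\to\Gamma$ in ${\rm Rep}_{\rm C^*}(\cl S)$.

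It then remains to check naturality of $\eta=(\eta_\Gamma)$. Given a morphism $\Phi\in{\rm Hom}_{\cl S}(\Gamma_1,\Gamma_2)$, I would expand a spanning element of $\phi_1''(\cl S)$ in the form $x=\mu_1(m^*)\nu_1(n)\phi_1(s)\nu_1(l)^*\mu_1(k^*)^*$ (using $\phi_i''(\cl S)=[\mu_i(\cl M^*)\psi_i(\cl T)\mu_i(\cl M^*)^*]$ and $\psi_i(\cl T)=[\nu_i(\cl M)\phi_i(\cl S)\nu_i(\cl M)^*]$), and then compute, via $\nu_i(n)\phi_i(s)\nu_i(l)^*=\psi_i(n\cdot s\cdot l^*)$, the definitions of $\cl F(\Phi)$ and of $\cl G(\cl F(\Phi))$, the identity $U_{\Gamma_i}\mu_i(m^*)=\nu_i(m)^*$ together with its adjoint, and $\nu_i(m)^*\nu_i(n)=\pi_i(m^*n)$, that
\[
U_{\Gamma_2}\bigl((\cl G\circ\cl F)(\Phi)\bigr)(x)U_{\Gamma_2}^*=\pi_2(m^*n)\,\Phi(\phi_1(s))\,\pi_2(l^*k),\qquad
U_{\Gamma_1}\,x\,U_{\Gamma_1}^*=\pi_1(m^*n)\,\phi_1(s)\,\pi_1(l^*k).
\]
The defining module property $\Phi(\pi_1(a)\phi_1(s)\pi_1(b))=\pi_2(a)\Phi(\phi_1(s))\pi_2(b)$, applied with $a=m^*n$ and $b=l^*k$, then yields $\Phi(U_{\Gamma_1}xU_{\Gamma_1}^*)=U_{\Gamma_2}((\cl G\circ\cl F)(\Phi))(x)U_{\Gamma_2}^*$, that is, $\eta_{\Gamma_2}\circ(\cl G\circ\cl F)(\Phi)=\Phi\circ\eta_{\Gamma_1}$ on the spanning set, and hence everywhere by continuity. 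This proves $\cl G\circ\cl F\cong{\rm Id}_{{\rm Rep}_{\rm C^*}(\cl S)}$; interchanging the roles of $\cl S,\cl T$ and of $\cl M,\cl M^*$ gives $\cl F\circ\cl G\cong{\rm Id}_{{\rm Rep}_{\rm C^*}(\cl T)}$ in the same way, and the asserted equivalence of categories follows.

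I expect the main obstacle to be organisational rather than conceptual: one must keep the two ternary morphisms $\nu$ and $\mu$ and their interactions with $U_\Gamma$ carefully separated, and confirm that the spanning elements used in the naturality computation really span a dense subspace of $\phi_i''(\cl S)$, so that the continuity extension is justified. The single genuinely non-formal ingredient — that $U_\Gamma$ is onto, equivalently the collapse $\cl M^*\otimes\cl M\simeq\cl A_\S$ — has already been established inside the proof of Theorem \ref{th_ac} and can simply be cited.
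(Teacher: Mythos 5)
Your proposal is correct and follows essentially the same route as the paper: it uses the collapse unitaries $U_\Gamma=U_H$ from the proof of Theorem \ref{th_ac}, the ternary morphism relation $\rho_i(m)=\nu_i(m)U_i$ (your $\mu_i(m^*)=\rho_i(m)^*$ is only a notational variant), the same spanning elements $\rho_1(m)^*\nu_1(n)\phi_1(s)\nu_1(l)^*\rho_1(k)$ of $\zeta_1(\cl S)$, and the module property of $\Phi$ to verify naturality, with symmetry handling $\cl F\circ\cl G$. This matches the paper's argument in both structure and detail.
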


\begin{proof}
We fix objects $(H_i, \phi_i, \pi_i)$ in ${\rm Rep}_{\rm C^*}(\cl S)$, $i = 1,2$, and let
\[
(K_i,\psi_i,\sigma_i)=\cl F((H_i, \phi_i, \pi_i)), \ (L_i, \zeta_i,\tau_i)=\cl G((K_i,\psi_i,\sigma_i)), \ 
i=1, 2.
\]
We also fix 
\[
\Phi\in {\rm Hom}_{\cl S}((H_1,\phi_1,\pi_1),(H_2,\phi_2,\pi_2)).
\]
Recalling the notation from the proof of Theorem \ref{th_ac}, we set $U_i := U_{H_i}$, $i=1,2$. 
We will show that the diagram
\begin{center}
\begin{tikzcd}[column sep=huge,row sep=huge]
\zeta_1(\cl S) \arrow[d,"(\cl G\circ \mathcal{F})(\Phi)" ] \arrow[r, "{\rm Ad}_{U_1}"] \arrow[d, black]
&\phi_1(\cl S) \arrow[d, "\Phi" black] \\
\zeta_2(\cl S)  \arrow[r, black, "{\rm Ad}_{U_2}" black]
& \phi_2(\cl S)
\end{tikzcd}
\end{center}
is commutative; that is, that 
\begin{equation}\label{eq_AdU1}
\Phi(U_1\zeta_1(s)U_1^*) = U_2(\cl G\circ \cl F)(\Phi)(\zeta_1(s))U_2^* \ \foral s \in \cl S.
\end{equation}

Let $\cl M$ be the TRO arising in Proposition \ref{P:reduction} and, for $i=1,2$, consider the ternary morphism
\[
\rho_i \colon \cl M\rightarrow \cl B(L_i, K_i) \textup{ given by } \rho_i(m)(n^*\otimes(l\otimes\xi))=\psi_i(mn^*)(l\otimes\xi).
\]
We have 
\[
\rho_i(m)(n^*\otimes(l\otimes\xi))=m\otimes\phi_i(n^*l)(\xi)=m\otimes U_i(n^*\otimes(l\otimes\xi));
\]
thus, 
\[
\rho_i(m)(\omega)=m\otimes U_i(\omega) \foral \omega \in L_i, \ i = 1,2.
\]
For $m, n \in \cl M$ and $\xi\in H_i$, we have 
\begin{align*} 
\sca{\rho_i(m)(\omega), n\otimes\xi}
& =
\sca{m\otimes U_i(\omega),n\otimes\xi }
=
\sca{\phi_i(n^*m)U_i(\omega), \xi}\\ 
& = 
\sca{U_i(\omega), \nu_i(m)^*(n\otimes\xi)}
=
\sca{\nu_i(m)U_i(\omega), n\otimes\xi}.
\end{align*}
Thus, 
\[
\rho_i(m)=\nu_i(m)U_i, \ \ i = 1,2.
\]

Let $m,n,k,l,p,q \in \cl M$, $s\in \cl S$ and $\xi\in H_2$. 
Then
\begin{align*}
& \hspace{0cm}
U_2 (\cl G \circ \cl F)(\Phi)(\rho_1(p)^*\nu_1(k)\phi_1(s)\nu_1(l)^*\rho_1(q)))U_2^*(\phi_2(m^*n)(\xi)) = \\
& \hspace{4cm} =
U_2 (\cl G\circ \cl F) (\Phi)(\rho_1(p)^*\nu_1(k)\phi_1(s)\nu_1(l)^*\rho_1(q)))U_2^*\nu_2(m)^*(n\otimes\xi)\\ 
& \hspace{4cm} =
U_2\rho_2(p)^*\cl F(\Phi)(\nu_1(k)\phi_1(s)\nu_1(l)^*)\rho_2(q)U_2^*\nu_2(m)^*(n\otimes\xi)\\
& \hspace{4cm} =
\nu_2(p)^*\nu_2(k)\Phi(\phi_1(s))\nu_2(l)^*\nu_2(q)\nu_2(m)^*(n\otimes\xi)\\ 
& \hspace{4cm} =
\Phi(\nu_1(p)^*\nu_1(k)\phi_1(s)\nu_1(l)^*\nu_1(q))\nu_2(m)^*(n\otimes\xi)\\ 
& \hspace{4cm} =
\Phi(\nu_1(p)^*\nu_1(k)\phi_1(s)\nu_1(l)^*\nu_1(q))(\phi_2(m^*n)(\xi))\hspace{-0.03cm}.
\end{align*}
Therefore 
\[
U_2 \left( (\cl G \circ \cl F) ( \Phi ) ( \rho_1(p)^* \nu_1(k) \phi_1(s) \nu_1(l)^* \rho_1(q) ) \right) U_2^*
=  
\Phi \left(\nu_1(p)^* \nu_1(k)\phi_1(s)\nu_1(l)^*\nu_1(q) \right).
\]
Since 
\[
\Phi(\nu_1(p)^*\nu_1(k)\phi_1(s)\nu_1(l)^*\nu_1(q)) 
=  
\Phi(\nu_1(p)^*\psi_1(k \cdot s \cdot l^*)\nu_1(q)) 
= 
\Phi(\phi_1(p^*ksl^*q)),
\]
we have that 
\[
U_2 \left( (\cl G\circ \cl F) (\Phi) (\rho_1(p)^*\nu_1(k)\phi_1(s)\nu_1(l)^*\rho_1(q)) \right) U_2^*=\Phi(\phi_1(p^*ksl^*q)).
\]
Since 
\[
\zeta_1(\cl S) = [\rho_1(\cl M)^*\nu_1(\cl M)\phi_1(\cl S)\nu_1(\cl M)^*\rho_1(\cl M)],
\]
we easily see that 
\[
U_2(\cl G\circ \cl F) (\Phi) (\zeta_1(s)) U_2^*=\Phi(\phi_1(s)) \foral s\in \cl S.
\]
Using (\ref{eq_UH}), we conclude that (\ref{eq_AdU1}) holds true, showing that $\cl G\circ \cl F\cong {\rm Id}_{{\rm Rep}_{\rm C^*}(\cl S)}$. 
By symmetry we also have that $\cl F \circ \cl G\cong {\rm Id}_{{\rm Rep}_{\rm C^*}(\cl T)}$, and the proof is complete. 
\end{proof}

%%%%%%%%%%%%%%%%%%%%%%%%%%%%%%%%
\begin{theorem}\label{P:M-fun0}
Let $\S$ and $\T$ be operator systems with $\S \sim_{\Delta} \T$. 
The functors $\cl F$ and $\cl G$ preserve the maximal representations and the Choquet representations.
In addition, $\S$ is hyperrigid if and only if $\T$ is hyperrigid.
\end{theorem}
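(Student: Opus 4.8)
The plan is to reduce the statement to Rieffel's theory of induced representations for the $C^*$-envelopes. Recall the standard facts about maximal maps (Arveson, Dritschel--McCullough, Davidson--Kennedy): a unital completely positive map $\phi\colon\cl S\to\B(H)$ is a maximal representation of $\cl S$ if and only if it is the restriction $\rho|_{\cl S}$ of a (necessarily unique) $*$-representation $\rho$ of $\cenv(\cl S)$, and then $\rho$ is the only completely positive extension of $\phi$ to $\cenv(\cl S)$; $\phi$ is moreover Choquet exactly when this $\rho$ is irreducible; and, by definition, $\cl S$ is hyperrigid if and only if $\pi|_{\cl S}$ is maximal for \emph{every} $*$-representation $\pi$ of $\cenv(\cl S)$. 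By Proposition \ref{P:reduction} we may assume $\cl S\subseteq\cenv(\cl S)$ and $\cl T\subseteq\cenv(\cl T)$ via the canonical embeddings, and fix a non-degenerate TRO $\M$ with $\cl S\sim_{\rm TRO}\cl T$ via $\M$, $[\M^*\M]=\A_{\cl S}$, $[\M\M^*]=\A_{\cl T}$, $[\M^*\cenv(\cl T)\M]=\cenv(\cl S)$ and $[\M\cenv(\cl S)\M^*]=\cenv(\cl T)$, and with respect to which $\cl F$ and $\cl G$ are the functors of this section. Put $\L:=[\M\,\cenv(\cl S)]$; then $[\L\L^*]=\cenv(\cl T)$ and $[\L^*\L]=\cenv(\cl S)$, so $\L$ is a $\cenv(\cl T)$-$\cenv(\cl S)$-imprimitivity bimodule and $\cenv(\cl S)$, $\cenv(\cl T)$ are strongly Morita equivalent.

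The first step is an identification: for \emph{any} $*$-representation $\rho$ of $\cenv(\cl S)$ on $H$, the object $\cl F(H,\rho|_{\cl S},\rho|_{\A_{\cl S}})$ of ${\rm Rep}_{\rm C^*}(\cl T)$ is unitarily equivalent to $(\Ind_\L\rho)|_{\cl T}$, where $\Ind_\L$ denotes Rieffel induction along $\L$. Indeed, $m\otimes\xi\mapsto m\otimes\xi$ identifies the completion of $\M\otimes_{\A_{\cl S}}H$ with $\L\otimes_{\cenv(\cl S)}H$ (using $\L=[\M\,\cenv(\cl S)]$ and unitality of $\rho$), and under this identification the module action of $\cl T$ defining $\cl F(\rho|_{\cl S})$ agrees with $\Ind_\L\rho$ restricted to $\cl T=[\M\cl S\M^*]\subseteq\cenv(\cl T)$, by a direct comparison of inner products and actions; note that no maximality of $\rho|_{\cl S}$ is used. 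Since $\L$ is an imprimitivity bimodule, $\Ind_\L$ is an equivalence between the representation theories of $\cenv(\cl S)$ and $\cenv(\cl T)$ with inverse $\Ind_{\L^*}$; in particular it carries irreducible $*$-representations to irreducible $*$-representations and induces a bijection between the unitary equivalence classes of $*$-representations of the two algebras.

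The heart of the matter is that $\cl F$ and $\cl G$ preserve maximality, which I would prove functorially. Recall that every unital completely positive map on an operator system admits a maximal dilation, and that the restriction of a maximal map to a subspace reducing its range is again maximal (if $\chi$ dilates such a restriction, then $\chi$ plus the complementary summand dilates the original maximal map, hence is trivial, hence $\chi$ is trivial). Let $\phi$ be a maximal representation of $\cl S$ and let $\psi'$ be a maximal dilation of $\cl F(\phi)$. Being maximal, $\psi'$ extends to a $*$-representation of $\cenv(\cl T)$, so $\psi'|_{\A_{\cl T}}$ is a $*$-homomorphism and $\psi'$ is an object of ${\rm Rep}_{\rm C^*}(\cl T)$. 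A short computation with the formulas defining $\cl G$ shows that the inclusion of the underlying space of $\cl F(\phi)$ into that of $\psi'$ induces an isometric inclusion of the associated induced Hilbert spaces and that $\cl G(\psi')$ is a dilation of $\cl G(\cl F(\phi))$, which by Theorem \ref{B0} is unitarily equivalent to $\phi$. As $\phi$ is maximal this dilation is trivial, so $\cl G(\psi')\cong\cl G(\cl F(\phi))\oplus\Gamma$ for some object $\Gamma$ of ${\rm Rep}_{\rm C^*}(\cl S)$. Applying $\cl F$, which by construction preserves orthogonal direct sums, and using $\cl F\circ\cl G\cong{\rm Id}$, we obtain $\psi'\cong\cl F(\phi)\oplus\cl F(\Gamma)$; thus $\cl F(\phi)$ is the restriction of the maximal map $\psi'$ to a reducing subspace, and so is maximal. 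By symmetry $\cl G$ preserves maximality, and since $\cl G\circ\cl F\cong{\rm Id}$ it follows that $\phi$ is maximal if and only if $\cl F(\phi)$ is.

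The remaining assertions now follow. If $\phi$ is Choquet, with irreducible $*$-representation extension $\rho$ of $\cenv(\cl S)$, then $\cl F(\phi)\cong(\Ind_\L\rho)|_{\cl T}$ is maximal, so its unique completely positive extension to $\cenv(\cl T)$ is the $*$-representation $\Ind_\L\rho$; the latter is irreducible because $\rho$ is and $\Ind_\L$ preserves irreducibility, whence $\cl F(\phi)$ is Choquet, and symmetrically for $\cl G$. Finally, as $\rho$ ranges over all $*$-representations of $\cenv(\cl S)$, the representation $\Ind_\L\rho$ ranges over all $*$-representations of $\cenv(\cl T)$, and $\rho|_{\cl S}$ is maximal if and only if $(\Ind_\L\rho)|_{\cl T}\cong\cl F(\rho|_{\cl S})$ is maximal, by the preservation and reflection of maximality established above; hence every $*$-representation of $\cenv(\cl S)$ restricts to a maximal map on $\cl S$ exactly when every $*$-representation of $\cenv(\cl T)$ restricts to a maximal map on $\cl T$, i.e. $\cl S$ is hyperrigid if and only if $\cl T$ is. The step I expect to require the most care is the functorial transfer of the \emph{absence of nontrivial dilations} in the third paragraph: dilations are not seen by the morphisms of ${\rm Rep}_{\rm C^*}$, so one must verify directly, from the explicit formulas, that $\cl F$ and $\cl G$ carry dilations to dilations (the ``short computation'' above); granting this and the $C^*$-envelope dictionary, the rest is formal.
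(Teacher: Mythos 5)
Your proposal is correct and its core argument for the preservation of maximality is essentially the paper's own: take a maximal dilation of $\cl F(\phi)$, apply $\cl G$, use that $\cl G\circ\cl F\cong{\rm Id}$ together with the maximality of $\phi$ to split off a direct summand, and apply $\cl F$ back. Your explicit identification of $\cl F$ with Rieffel induction along the imprimitivity bimodule $[\M\,\cenv(\cl S)]$ is a harmless (and correct) unpacking of the construction that the paper leaves implicit, and it neatly supplies the Choquet and hyperrigidity arguments that the paper only sketches.
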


\begin{proof}
Using Proposition \ref{P:reduction}, we assume that 
\[
\phi \colon \S \to \B(H) \qand \psi \colon \T \to \B(K)
\] 
are unital complete order isomorphisms such that 
\[
\cenv(\S) \simeq \ca(\phi(\S)) \qand \cenv(\T) \simeq \ca(\psi(\T)),
\]
and that $\M$ is a closed TRO such that $\phi(\S) \sim_{\rm TRO} \psi(\T)$ via $\M$, 
\[
\phi(\A_\S) = [\M^* \M] = [\M^* \psi(\A_\T) \M] \qand \A_\T \simeq [\M \M^*] = [\M \phi(\A_\S) \M^*].
\]
Thus $\ca(\phi(\S)) \sim_{\rm TRO} \ca(\psi(\T))$ and $\phi(\A_\S) \sim_{\rm TRO} \psi(\A_\T)$ via $\M$, and as these are C*-algebras, the TRO-equivalence is a strong Morita equivalence.

Let $\rho$ be a maximal unital completely positive map of $\S$. 
By Arveson's Invariant Principle we may assume $\rho \colon \phi(\S) \to \B(L)$.
Maximality implies that $\rho$ has a (unique) unital completely positive extension to a $*$-representation of $\ca(\phi(\S))$ and thus it induces a C*-representation of $\cl S$. 
Let $\tau := \cl F(\rho)$; thus, 
$\tau \colon \psi(\T) \to \B(\M \widetilde{\otimes} L)$
be the induced unital completely positive map (see the proof of Theorem \ref{th_ac}).
By \cite{DM05}, there exists a maximal dilation $\wt{\tau}$ of $\tau$ acting on some $K \supseteq \M \widetilde{\otimes} L$.
We will show that $\wt{\tau}$ is trivial.

Towards this end, consider $\cl G(\wt{\tau})$ and notice that by construction the functors $\cl G$ and $\cl F$ preserve dilations.
Therefore
\[
\cl G(\wt{\tau}) = \begin{bmatrix} (\cl G \circ \cl F)(\rho) & \ast \\ \ast & \ast \end{bmatrix} 
= \begin{bmatrix} (\cl G \circ\cl F)(\rho) & 0 \\0 & \ast \end{bmatrix},
\]
since $\cl G \circ \cl F(\rho)$ is unitarily equivalent to the maximal $\rho$, and thus maximal itself.
Applying $\F$ to the direct sum we get that $\wt{\tau}$ is unitarily equivalent to 
\[
(\cl F \circ \cl G)(\wt{\tau}) = \begin{bmatrix} (\cl F \circ \cl G \circ \cl F)(\rho)& 0 \\ 0 & \ast \end{bmatrix},
\]
where $(\F \circ \G \circ \F)(\rho)$ is unitarily equivalent to $\tau$.
The associated unitaries preserve the direct sum (and one is an extension of the other); 
after applying them, we see that $\wt{\tau}$ is a trivial dilation of $\tau$.
Hence the maximal representations are preserved.

Note that the functors preserve irreducibility of the unique C*-extensions, and thus the Choquet representations are also preserved.
Similar arguments, which are left to the reader, can be applied to show that if $\S$ is hyperrigid then so is $\T$.
\end{proof}

The second category we consider, denoted ${\rm Hmod}(\cl S)$, has the same objects as ${\rm Rep}_{{\rm C^*}}(\cl S)$, but given  objects $\Gamma_1 = (H_1,\phi_1)$ and $\Gamma_2 = (H_2,\phi_2)$, the set ${\rm Int}_{\cl S}(\Gamma_1, \Gamma_2)$ of morphisms from $\Gamma_1$ to $\Gamma_2$ is defined by letting 
\[
{\rm Int}\mbox{}_{\cl S}(\Gamma_1, \Gamma_2) = \{T\in \cl B(H_1,H_2) \mid T\phi_1(s) = \phi_2(s)T \mbox{ for all } s\in \cl S\};
\]
we call the elements $T\in {\rm Int}\mbox{}_{\cl S}(\Gamma_1, \Gamma_2)$ \emph{intertwiners} of the pair $(\Gamma_1,\Gamma_2)$. 
Keeping the notation already defined for the category ${\rm Rep}_{\rm C^*}(\cl S)$, given objects $\Gamma_i = (H_i,\phi_i,\pi_i)$, $i = 1,2$, and an element $T\in {\rm Int}_{\cl S}(\Gamma_1, \Gamma_2)$, set $(K_i,\psi_i) = \cl F(\Gamma_i)$, $i = 1,2$, and let 
$\tilde{T} \colon \cl M\odot H_1\to \cl M\odot H_2$
be the linear map, 
given by $\tilde{T}(m\otimes \xi) = m\otimes T\xi$, $m\in \cl M$, $\xi\in H_1$. 
Since $T$ is an intertwiner, if $m\in \cl M$, $a\in \cl A_{\cl S}$ and $\xi\in H_1$ then 
\begin{align*}
\tilde{T}\left((m \cdot a) \otimes \xi - m \otimes (a \cdot \xi)\right)
& = 
(m \cdot a) \otimes T \xi - m \otimes T \pi_1(a)\xi\\
& = 
(m \cdot a) \otimes T \xi - m \otimes \pi_2(a)T \xi\\
& = 
(m \cdot a) \otimes T \xi - m \otimes \left(a\cdot T \xi\right).
\end{align*}
Thus, $\tilde{T}$ induces a linear map (denoted in the same way) 
$\tilde{T} \colon \cl M\otimes_{[\cl M^*\cl M]} H_1 \to \cl M\otimes_{[\cl M^*\cl M]} H_2$.
Standard arguments, similar to (\ref{eq_nthet}), show that $\tilde{T}$ is bounded and hence induces a linear map (denoted in the same way) $\tilde{T} \colon K_1\to K_2$. 
In addition, if $m,n,l,\in \cl M$, $s\in \cl S$ and $\xi\in H_1$, then, using (\ref{eq_psiphi}), 
we have 
\begin{align*}
\tilde{T}\psi(msn^*)\left(l\otimes \xi\right) 
& = 
\tilde{T}\left(m\otimes \phi(sn^*l)\xi\right) 
= 
m\otimes T\phi(sn^*l)\xi\\
& = 
m\otimes \phi(sn^*l)T\xi
=
\psi(msn^*)(l\otimes T\xi)
= 
\psi(msn^*)\tilde{T}\left(l\otimes \xi\right),
\end{align*}
showing that $\tilde{T}\in {\rm Int}_{\cl T}((K_1,\psi_1), (K_2,\psi_2))$. 
We set $\cl F(T) := \tilde{T}$. 
Techniques, similar to those in Theorem \ref{B0}, show the following theorem.

%%%%%%%%%%%%%%%%%%%%%%%%%%%%%%%%
\begin{theorem}\label{B02}
Let $\cl S, \cl T$ be operator systems with $\cl S \sim_\Delta \cl T$. 
We have that
\[
\cl G \circ \cl F\cong {\rm Id}_{{\rm Hmod}(\cl S)} 
\qand
\cl F \circ \cl G\cong {\rm Id}_{{\rm Hmod}(\cl T)},
\]
up to natural equivalence.  
In particular, the categories ${\rm Hmod}(\cl S)$ and ${\rm Hmod}(\cl T)$ are equivalent through completely contractive functors.
\end{theorem}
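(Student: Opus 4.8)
The plan is to run the argument of Theorem \ref{B0} almost verbatim, with the module morphisms $\Phi$ replaced by intertwiners $T$ and the conjugations ${\rm Ad}_{U_i}$ replaced by the surjective isometries $U_i$ themselves. Since ${\rm Hmod}(\cl S)$ and ${\rm Rep}_{\rm C^*}(\cl S)$ have the same objects and $\cl F$, $\cl G$ act on objects exactly as the functors of Theorem \ref{B0}, the construction behind Theorem \ref{th_ac} attaches to each object $\Gamma = (H,\phi,\pi)$ the Hilbert spaces $K = \cl M\wt{\otimes}H$ and $L = \cl M^*\wt{\otimes}K$, carrying representations $\psi$ of $\cl T$ and $\zeta$ of $\cl S$, together with the canonical surjective isometry $U_H\colon L\to H$ of (\ref{eq_UH}). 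First I would note that $U_H$ is already a morphism in ${\rm Hmod}(\cl S)$ from $(\cl G\circ\cl F)(\Gamma) = (L,\zeta)$ to $(H,\phi)$: this is exactly the relation $U_H\zeta(s) = \phi(s)U_H$ recorded in (\ref{eq_UH}) (there written with $\zeta = \phi'$), and since $U_H$ is a surjective isometry it is invertible in ${\rm Hmod}(\cl S)$. So $\{U_H\}_\Gamma$ is the candidate natural isomorphism $\cl G\circ\cl F\cong{\rm Id}_{{\rm Hmod}(\cl S)}$.

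Next I would verify naturality. Fix $\Gamma_i = (H_i,\phi_i,\pi_i)$ for $i = 1,2$, an intertwiner $T\in{\rm Int}_{\cl S}(\Gamma_1,\Gamma_2)$, and set $U_i = U_{H_i}$, $(K_i,\psi_i,\sigma_i) = \cl F(\Gamma_i)$ and $(L_i,\zeta_i,\tau_i) = \cl G(\Gamma_i)$. By construction $\cl F(T) = \tilde T$ is the bounded operator $m\otimes\xi\mapsto m\otimes T\xi$ on $K$, and, since $\cl G$ is built from $\cl M^*$ in the same way, $(\cl G\circ\cl F)(T)$ is the operator $n^*\otimes m\otimes\xi\mapsto n^*\otimes m\otimes T\xi$ on $L$. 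I would then check the square
\[
U_2\circ(\cl G\circ\cl F)(T) = T\circ U_1
\]
on elementary tensors: using the description of $U_H$ in the TRO picture from the proof of Theorem \ref{th_ac}, one gets $U_2\bigl((\cl G\circ\cl F)(T)(n^*\otimes m\otimes\xi)\bigr) = \phi_2(n^*m)\,T\xi$ and $T\bigl(U_1(n^*\otimes m\otimes\xi)\bigr) = T\,\phi_1(n^*m)\xi$, where $n^*m\in[\cl M^*\cl M]\simeq\cl A_{\cl S}\subseteq\cl S$; these coincide because $T\phi_1(s) = \phi_2(s)T$ for every $s\in\cl S$, in particular for $s = n^*m$. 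Density of elementary tensors and boundedness of all maps involved then upgrade this to the identity on $L_1$, giving $\cl G\circ\cl F\cong{\rm Id}_{{\rm Hmod}(\cl S)}$; the relation $\cl F\circ\cl G\cong{\rm Id}_{{\rm Hmod}(\cl T)}$ follows by the symmetric argument with $\cl M^*$ in place of $\cl M$.

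Complete contractivity of $\cl F$ (and $\cl G$) is the matrix-norm estimate already carried out in (\ref{eq_nthet}): for $[T_{ij}]\in M_n({\rm Int}_{\cl S}(\Gamma_1,\Gamma_2))$ one reads off $\|\cl F^{(n)}([T_{ij}])\|\leq\|[T_{ij}]\|$ directly from the formula $\tilde T(m\otimes\xi) = m\otimes T\xi$ and the expression for the inner product on $\cl M\wt{\otimes}H$, exactly as the boundedness of $\tilde T$ was obtained in the discussion preceding the statement; that $\cl F$ and $\cl G$ are functors is immediate, since tensoring with $T$ respects composition and identities. The main difficulty here is bookkeeping rather than mathematics: one has to fix once and for all the identifications $L\simeq\cl M^*\otimes_{[\cl M\cl M^*]}\cl M\otimes_{[\cl M^*\cl M]}H\simeq H$ and the compatibilities among $U_H$, the ternary morphism $\nu$ and the maps $\rho_i$ of Theorem \ref{B0}, so that the evaluation of $U_2\circ(\cl G\circ\cl F)(T)$ on $n^*\otimes m\otimes\xi$ is unambiguous; once that is done, each step collapses to the $\cl A_{\cl S}$-linearity of intertwiners, which is free from $\cl A_{\cl S}\subseteq\cl S$.
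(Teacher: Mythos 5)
Your proposal is correct and follows essentially the same route as the paper, which only sketches the argument by appealing to the construction preceding the statement and to Theorem \ref{B0}: the components of the natural isomorphism are the surjective isometries $U_H$ from the proof of Theorem \ref{th_ac}, and naturality reduces, on elementary tensors, to the intertwining relation applied to elements $n^*m\in[\cl M^*\cl M]\simeq\cl A_{\cl S}\subseteq\cl S$. Your verification of the square $U_2\circ(\cl G\circ\cl F)(T)=T\circ U_1$ and of the norm estimates via the inner-product computation is exactly the intended filling-in of that sketch.
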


%%%%%%%%%%%%%%%%%%%%%%%%%%%%%%%%
\begin{remark}\label{r_Cstarco}
Let $\cl A$ be a unital C*-algebra, considered as an operator system. 
As $\A$ coincides with its multiplier algebra, the category ${\rm Hmod}(\cl S)$ coincides with the category of Hermitian modules considered by Rieffel in \cite{Rie74jpaa}; in particular, its objects are (equivalence classes of) pairs $(H,\pi)$, where $H$ is a Hilbert space and $\pi \colon \cl A\to \cl B(H)$ is a unital *-homomorphism. 
Since Morita and strong Morita equivalence of C*-algebras are genuinely 
distinct notions (see e.g. \cite{Beer82}), we have that the converse of Theorem \ref{B02} does not hold true. 
We are not aware if the converse of Theorem \ref{B0} holds true. 
\end{remark}

%%%%%%%%%%%%%%%%%%%%%%%%%%%%%%%%
\section{Morita equivalence invariants}\label{s_mei}
%%%%%%%%%%%%%%%%%%%%%%%%%%%%%%%%

In this section, we examine the behaviour of tensor products 
with respect to $\Delta$-equivalence, 
and lift a well-known ideal lattice isomorphism property from the C*-algebra to the operator system case. 

%%%%%%%%%%%%%%%%%%%%%%%%%%%%%%%%
\subsection{Isomorphism of lattices of kernels}\label{ss_kernels}
%%%%%%%%%%%%%%%%%%%%%%%%%%%%%%%%

We write  ${\rm Ker}(\cl S)$ for the set of all \emph{kernels} of an operator system $\cl S$, that is, the set of subspaces $\cl J\subseteq \cl S$ for which there exists a Hilbert space $H$ and a unital completely positive map $\phi \colon \cl S\to \cl B(H)$ such that $\cl J = \ker(\phi)$. 
By \cite[Proposition 3.1]{KPTT13}, these are precisely the subspaces $\cl J \subseteq\cl S$ for which there exists a subset $A$ of the state space $S(\cl S)$ of $\cl S$ such that 
\[
\cl J = \cl J_A := \{x\in \cl S \mid f(x) = 0, \ f\in A\}.
\]
For a subspace $\J\subseteq \cl S$, we write
\[
{\rm hull}(\cl J) := \{f\in S(\cl S) \mid \cl J\subseteq \ker(f)\};
\]
thus, if $\cl J$ is a kernel, then $\cl J = \cl J_{{\rm hull}(\cl J)}$. 
For $\cl J_i\in {\rm Ker}(\cl S)$ and $A_i = {\rm hull}(\cl J_i)$, $i = 1,2$, set 
\begin{equation}\label{eq_latt}
\cl J_1 \wedge \cl J_2 := \cl J_{A_1\cup A_2}  
\qand 
\cl J_1 \vee \cl J_2 := \cl J_{A_1\cap A_2}.
\end{equation}
Note that $\cl J_1\wedge \cl J_2 = \cl J_1\cap \cl J_2$ and that $\cl J_1 \vee \cl J_2$ is the smallest  kernel containing $\cl J_1$ and $\cl J_2$; equivalently, $\J_1 \vee \J_2$ is the intersection of all kernels that contain $\J_1 + \J_2$.
When equipped with the operations (\ref{eq_latt}), the set ${\rm Ker}(\cl S)$ becomes a lattice. 

Let ${\rm Ker}_{\cl A_{\cl S}}(\cl S)$ be the set of all subspaces $\cl J$ for which there exists an element $(H_\phi, \phi)$ of ${\rm Rep}_{\rm C^*}(\cl S)$ such that $\cl J = \ker(\phi)$.
By definition, the elements of ${\rm Ker}_{\cl A_{\cl S}}(\cl S)$ are $\cl A_{\cl S}$-bimodules. 
Since $\A_\S$ is unital, we have, in particular, that
\[
[\A_\S \J \A_\S] = \J, \foral \J \in {\rm Ker}_{\A_\S}(\S).
\]
Let $\cl J_i\in {\rm Ker}_{\cl A_{\cl S}}(\cl S)$, say $\cl J_i = \ker(\phi_i)$ for some $(H_i,\phi_i)$ from ${\rm Rep}_{\rm C^*}(\cl S)$, $i = 1,2$. 
Letting $H = H_1 \oplus H_2$, $\phi(x) = \phi_1(x) \oplus \phi_2(x)$ and  $\pi(a) = \pi_1(a) \oplus \pi_2(a)$, $x\in \cl S$, $a\in \cl A_{\cl S}$, we have that $(H,\phi)$ is an object from ${\rm Rep}_{\rm C^*}(\cl S)$ and $\cl J_1\cap \cl J_2 = \ker(\phi)$. 
Set $\cl J_1\wedge \cl J_2 = \cl J_1\cap \cl J_2$. 
Similarly to the previous paragraph, write  $\cl J_1\vee \cl J_2$ for the smallest element of ${\rm Ker}_{\cl A_{\cl S}}(\cl S)$ containing $\cl J_1 + \cl J_2$, turning ${\rm Ker}_{\cl A_{\cl S}}(\cl S)$ into a lattice with respect to the operations $\wedge$ and $\vee$ (the element $\cl J_1\vee \cl J_2$ exists since ${\rm Ker}_{\cl A_{\cl S}}(\cl S)$ is a complete semi-lattice, when equipped with the operation $\wedge$). 
Note that, in the case where $\cl S = \cl A_{\cl S}$ (that is, $\cl S$ is a C*-algebra), 
${\rm Ker}_{\cl A_{\cl S}}(\cl S)$ coincides with the lattice of all closed ideals of $\cl S$. 

%%%%%%%%%%%%%%%%%%%%%%%%%%%%%%%%
\begin{theorem}\label{th_latt}
Let $\cl S$ and $\cl T$ be operator systems. 
If $\S \sim_{\Delta} \T$ then the lattices ${\rm Ker}_{\cl A_{\cl S}}(\cl S)$ and ${\rm Ker}_{\cl A_{\cl T}}(\cl T)$ are isomorphic. 
\end{theorem}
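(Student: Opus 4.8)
The plan is to construct the lattice isomorphism directly from the $\Delta$-context attached to $\S$ and $\T$. First I would apply Proposition \ref{P:reduction} to fix a non-degenerate TRO $\M$ together with the induced identifications $\A_\S\simeq[\M^*\M]$, $\A_\T\simeq[\M\M^*]$ and the associated completely positive trilinear maps $[\cdot,\cdot,\cdot]\colon\M^*\times\T\times\M\to\S$ and $(\cdot,\cdot,\cdot)\colon\M\times\S\times\M^*\to\T$; thus $\big(\S,\T,\M,[\cdot,\cdot,\cdot],(\cdot,\cdot,\cdot)\big)$ is a $\Delta$-context, whose associativity and unitality relations are recorded in Definition \ref{d_DelM} and Remark \ref{R:properties}. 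For $\J\in{\rm Ker}_{\A_\S}(\S)$ I set
\[
\Theta(\J):=\{t\in\T\mid[\M^*,t,\M]\subseteq\J\},
\]
and define $\Theta'\colon{\rm Ker}_{\A_\T}(\T)\to{\rm Ker}_{\A_\S}(\S)$ symmetrically through $(\cdot,\cdot,\cdot)$.

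The first task is to see that $\Theta(\J)$ is again a kernel. Choosing any $(H,\phi,\pi)\in{\rm Rep}_{\rm C^*}(\S)$ with $\ker\phi=\J$ — such exists by the very definition of ${\rm Ker}_{\A_\S}(\S)$ — and forming $(K,\psi,\sigma)=\F\big((H,\phi,\pi)\big)$, where $K$ is the Hausdorff-completed $\A_\S$-balanced space built from $\M\otimes_{\A_\S}H$, the defining relation $\langle\psi(t)(a\otimes\xi),b\otimes\eta\rangle=\langle\phi(b^*\cdot t\cdot a)\xi,\eta\rangle$, in which $b^*\cdot t\cdot a:=[b^*,t,a]\in\S$ is the $\Delta$-context action, shows — after expanding on the dense set of simple tensors $a\otimes\xi$ — that $\psi(t)=0$ precisely when $\phi([b^*,t,a])=0$ for all $a,b\in\M$; that is, $\ker\psi=\Theta(\J)$. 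Since $\psi$ is by construction an object of ${\rm Rep}_{\rm C^*}(\T)$, this yields $\Theta(\J)\in{\rm Ker}_{\A_\T}(\T)$, and simultaneously that $\Theta(\J)$ does not depend on the chosen $\phi$.

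Next I would verify that $\Theta$ is an order isomorphism. Monotonicity of $\Theta$ and of $\Theta'$ is immediate from the defining formulas. For $\Theta'\circ\Theta={\rm id}$, observe that
\[
\Theta'(\Theta(\J))=\{s\in\S\mid[\M^*,(\M,s,\M^*),\M]\subseteq\J\},
\]
and by the $\Delta$-context associativity relation $[m_1^*,(m_2,s,m_3^*),m_4]=(m_1^*m_2)\cdot s\cdot(m_3^*m_4)$ from Definition \ref{d_DelM}(iv), together with $[\M^*\M]=\A_\S$, the closed linear span of the set $[\M^*,(\M,s,\M^*),\M]$ equals $[\A_\S\,s\,\A_\S]$. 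As $\A_\S$ is unital and $\J$ is a closed $\A_\S$-bimodule, $[\M^*,(\M,s,\M^*),\M]\subseteq\J$ holds if and only if $s\in\J$; hence $\Theta'\circ\Theta={\rm id}$, and by symmetry $\Theta\circ\Theta'={\rm id}$. (Alternatively, these two identities follow from Theorem \ref{B0}: $\G\circ\F\cong{\rm Id}$ forces $(\G\circ\F)(\phi)$ to be unitarily equivalent to $\phi$, hence to have the same kernel.) Thus $\Theta$ is a bijection with order-preserving inverse, i.e.\ an order isomorphism between the posets ${\rm Ker}_{\A_\S}(\S)$ and ${\rm Ker}_{\A_\T}(\T)$; since both are lattices (meet $=\cap$, join $=$ smallest kernel containing the sum) and order isomorphisms preserve all existing infima and suprema, $\Theta$ is a lattice isomorphism, which proves Theorem \ref{th_latt}.

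The step that needs the most care is the kernel identification $\ker\F(\phi)=\Theta(\J)$: one must use that the Hilbert space of $\F(\phi)$ is the \emph{balanced} Hausdorff completion, so that vanishing of the relevant sesquilinear form on simple tensors already forces $\psi(t)=0$, and one must keep track of the fact that $[\cdot,\cdot,\cdot]$ is the trilinear map of the particular $\Delta$-context delivered by Proposition \ref{P:reduction}, compatible with the fixed bimodule actions on $\S$ and $\T$. Everything else reduces to routine manipulations with the associativity relations of Remark \ref{R:properties} and the unitality of $[\M^*\M]$ and $[\M\M^*]$.
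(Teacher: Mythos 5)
Your proof is correct and takes essentially the same route as the paper: your $\Theta(\J)=\{t\in\T\mid[\M^*,t,\M]\subseteq\J\}$ is exactly the kernel of the induced representation $\F(\phi)$, i.e.\ the paper's $\tilde{\F}(\J)=[\M\,\J\,\M^*]$, and your inverse computation via the associativity relations and unitality of $\A_\S\simeq[\M^*\M]$, $\A_\T\simeq[\M\M^*]$ is the same algebraic step as the paper's $[\M^*\M\J\M^*\M]=[\A_\S\J\A_\S]=\J$ (the remark following the theorem records precisely this formula-based variant). The only cosmetic difference is the finish: you observe that a pair of mutually inverse monotone maps is automatically a lattice isomorphism, whereas the paper additionally checks preservation of intersections via direct sums of representations before concluding.
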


\begin{proof}
Keeping the notation of Theorem \ref{B0}, let
\[
\cl F \colon {\rm Rep}_{\rm C^*}(\cl S) \to {\rm Rep}_{\rm C^*}(\cl T)
\qand
\cl G \colon {\rm Rep}_{\rm C^*}(\cl T) \to {\rm Rep}_{\rm C^*}(\cl S)
\]
be the functors specified therein.
For $\cl J\in {\rm Ker}_{\cl A_{\cl S}}(\cl S)$, write $\cl J = \ker(\phi)$ for some $(H_\phi, \phi)\in {\rm Rep}_{\rm C^*}(\cl S)$, and let $\tilde{\cl F}(\cl J) = \ker(\cl F(\phi))$.
It can be seen, e.g., using item (iv) from Section \ref{ss_ci}, 
that 
$y\in \ker(\cl F(\phi))$ if and only if $b^* y a\in \ker(\phi)$ for all $a,b\in \cl M$; thus the restriction
\[
\tilde{\cl F} \colon {\rm Ker}_{\cl A_{\cl S}}(\cl S) \to {\rm Ker}_{\cl A_{\cl T}}(\cl T)
\]
is a well-defined map.
In addition, it can be seen from the construction that
\[
\M \widetilde{\otimes}_{[\cl M^*\cl M]} (H_1 \oplus H_2) 
\simeq (\M \widetilde{\otimes}_{[\cl M^*\cl M]} H_1) \oplus (\M \widetilde{\otimes}_{[\cl M^*\cl M]} H_2),
\]
and hence $\cl F$ preserves direct sums in the sense that
\[
\cl F(H_1\oplus H_2) \simeq \cl F(H_1)\oplus \cl F(H_2)
\qand
\cl F(\phi_1\oplus\phi_2) \simeq \cl F(\phi_1)\oplus \cl F(\phi_2),
\]
up to unitary equivalence.
Thus, 
\begin{align*}
\tilde{\cl F}(\cl J_1\cap \cl J_2) 
& = \ker(\cl F(\phi_1\oplus \phi_2)) 
= \ker\left(\cl F(\phi_1)\oplus \cl F(\phi_2)\right)\\
& =
\ker\left(\cl F(\phi_1)\right) \cap \ker\left(\cl F(\phi_2)\right)
= \tilde{\cl F}(\cl J_1)\cap \tilde{\cl F}(\cl J_2).
\end{align*}
In particular, $\tilde{\cl F}$ is order-preserving, and hence preserves the operation $\vee$. 
We conclude that $\tilde{\cl F}$ is a lattice homomorphism. 

By symmetry, $\tilde{\cl G}$ is a lattice homomorphism. 
Note, in fact, that
\begin{equation}\label{eq_FG}
\tilde{\cl F}(\cl J) = [\cl M\cl J\cl M^*] \ \mbox{ and } \tilde{\cl G}(\cl I) = [\cl M^*\cl I\cl M].
\end{equation}
Since $\A_\S \simeq [\M\M^*]$ we have that
\[
(\tilde{\cl G} \circ \tilde{\cl F})(\J) = [\M^* \M \J \M^* \M] = [\A_\S \J \A_\S] = \J,
\]
and therefore $\tilde{\cl F}$ and $\tilde{\cl G}$ are mutual inverses.
\end{proof}

%%%%%%%%%%%%%%%%%%%%%%%%%%%%%%%%
\begin{remark}
An alternative approach for the lattice isomorphism in Theorem \ref{th_latt}
would be to use the formulas (\ref{eq_FG}) directly as the definition of the lattice homomorphisms, once we have fixed the TRO equivalent isometric representations of $\S$ and $\T$.
One can directly check that indeed this provides a lattice isomorphism.
For the operation $\wedge$ let $\J_1, \J_2 \in {\rm Ker}_{\A_\S}$ and note that
\[
[\M (\J_1 \cap \J_2) \M^*] \subseteq [\M \J_1 \M^*] \cap [\M \J_2 \M^*].
\]
On the other hand for $x \in [\M \J_1 \M^*] \cap [\M \J_2 \M^*]$ we have that
\[
\M^* x \M \subseteq [\M^* \M \J_1 \M^* \M] \cap [\M^* \M \J_2 M^* \M] = \J_1 \cap \J_2.
\]
Therefore, using the fact that $[\M \M^*]$ is unital, we obtain
\[
x \in [\M \M^*] x [\M \M^*] \subseteq [\M (\J_1 \cap \J_2) \M^*],
\]
giving the required equality
\[
[\M (\J_1 \cap \J_2) \M^*] = [\M \J_1 \M^*] \cap [\M \J_2 \M^*].
\]
Likewise, for $\vee$, we have by definition that $\J_1 \vee \J_2 \supseteq \J_1 + \J_2$ and thus
\[
[\M (\J_1 \vee \J_2) \M^*] \supseteq [\M \J_1 \M^*] \vee [\M \J_2 \M^*].
\]
On the other hand
\[
[\M^* \cdot [\M \J_1 \M^*] \vee [\M \J_2 \M^*] \cdot \M] \supseteq [\M^* \M \J_i \M^* \M] = \J_i,
\]
and thus
\[
[\M^* \cdot [\M \J_1 \M^*] \vee [\M \J_2 \M^*] \cdot \M] \supseteq \J_1 \vee \J_2.
\]
Applying again $\M$ and using the TRO property $[\M \M^* \M] = \overline{\M}$, we derive that
\begin{align*}
[\M \J_1 \M^*] \vee [\M \J_2 \M^*]
& = 
[\M \M^* \cdot [\M \J_1 \M^*] \vee [\M \J_2 \M^*] \cdot \M \M^*]
\supseteq 
[\M (\J_1 \vee \J_2) \M^*]
\end{align*}
and therefore the equality
\[
[\M (\J_1 \vee \J_2) \M^*] = [\M \J_1 \M^*] \vee [\M \J_2 \M^*].
\]
\end{remark}

%%%%%%%%%%%%%%%%%%%%%%%%%%%%%%%%
\subsection{Tensor products}
%%%%%%%%%%%%%%%%%%%%%%%%%%%%%%%%

Tensor products of operator systems have been studied by A. Kavruk, V. Paulsen, I. Todorov and M. Tomforde \cite{KPTT11, KPTT13}; 
we recall some necessary background. 
Let $\S$ and $\R$ be two operator systems and write $\S \odot \R$ for their algebraic tensor product.
There are several operator system structures $\tau$ of interest one can endow $\S \odot \R$ with, and we will write $\S \otimes_\tau \R$ for the completion of $\S \odot \R$ with respect to the norm of the $\tau$-tensor product.

The \emph{maximal tensor product $\S \otimes_{\max} \R$} is characterised by the property that it linearises 
every jointly completely positive bilinear map from $\S \times \R$ into an operator system, 
to a completely positive map.
The \emph{minimal} (or \emph{spatial}) \emph{tensor product} $\S \otimes \R$ is given as the (concrete) closed operator subsystem of $\B(H \otimes K)$ arising from (any) unital complete order embeddings $\S \hookrightarrow \B(H)$ and $\R \hookrightarrow \B(K)$.
These are examples of associative tensor products.

In the \emph{commuting tensor product} $\S \otimes_{\rm  c} \R$, an element 
$u\in M_n(\S \odot \R)$ is positive if it has positive images under the maps $(\phi\cdot\psi)^{(n)}$ (where $(\phi\cdot\psi)(x\otimes y) = \phi(x)\psi(y)$), for all unital completely positive maps $\phi$ and $\psi$ with commuting ranges, defined on $\S$ and $\R$, respectively. 
In \cite[Lemma 2.5]{KPTT13} it is shown that:
\begin{enumerate}
\item if $\A$ is a C*-algebra then the inclusion $\S \odot \A \hookrightarrow \S \otimes_{\max} \A$ extends to a complete order isomoprhism on $\S \otimes_{\rm c} \A$;
\item the inclusion $\S \odot \R \hookrightarrow \ca_{\max}(\S) \otimes_{\max} \ca_{\max}(\R)$ extends to a complete order embedding on $\S \otimes_{\rm c} \R$;
\item the inclusion $\S \odot \R \hookrightarrow \S \otimes_{\rm c} \ca_{\max}(\R)$ extends to a complete order embedding on $\S \otimes_{\rm c} \R$.
\end{enumerate}
By combining items (i) and (iii) we thus derive the following complete order embedding in the case of a C*-algebra $\A$ and an operator system $\R$:
\begin{equation}\label{eq:c}
\A \otimes_{\rm c} \R \stackrel{{\rm (iii)}}{\hookrightarrow} \A \otimes_{\rm c} \ca_{\max}(\R) \stackrel{{\rm (i)}}{\simeq} \A \otimes_{\max} \ca_{\max}(\R),
\end{equation}
and thus
\begin{equation}\label{eq:c2}
\A \otimes_{\rm c} \R \simeq [\A \odot \iota_{\max}(\R)]^{-\A \otimes_{\max} \ca_{\max}(\R)}.
\end{equation}

The \emph{essential left tensor product} $\S \otimes_{\rm el} \R$ is defined by the requirement that the inclusion map $\S \odot \R \subseteq \I(\S) \otimes_{\max} \R$ lifts to a complete order embedding on $\S \otimes_{\rm el} \R$.
The \emph{essential right tensor product} $\S \otimes_{\rm er} \R$ is defined by the requirement that 
the inclusion map $\S \odot \R \subseteq \S \otimes_{\max} \I(\R)$ lifts to a complete order embedding on $\S \otimes_{\rm er} \R$.

If $\tau$ and $\tau'$ are operator system tensor products for which the identity map on $\S \odot \R$ lifts to a completely positive map $\cl S\otimes_\tau\cl R\to \cl S\otimes_{\tau'}\cl R$ for all operator systems $\cl S$ and $\cl R$ then we write $\tau' \leq \tau$.
In this case, we say that $\S$ is $(\tau', \tau)$-nuclear if for every operator system $\R$ the identity map on $\S \odot \R$ 
lifts to a complete order isomorphism  $\S \otimes_\tau \R \to \S \otimes_{\tau'} \R$.
We say that $\S$ is \emph{nuclear} if it is $(\min,\max)$-nuclear.

It was shown in \cite{KPTT11} that the aforementioned tensor product structures compare as follows:
\begin{align*}
\min \leq {\rm er}, {\rm el} \leq {\rm c} \leq \max.
\end{align*}
In \cite{KPTT13}, 
the nuclearity with respect to these tensor product structures is linked to 
approximation properties of an operator system $\S$.
In more detail, $\S$ is called \emph{exact} if for every C*-algebra $\A$ with an ideal $\I$ we have that the well-defined map
\[
\quo{\A \otimes \S}{\I \otimes \S} \rightarrow \left(\quo{\A}{\I}\right) \otimes \S
\]
is a completely isometric isomorphism.
We say that $\S$ has the \emph{operator system local lifting property} (OSLLP), if whenever $\A$ is a unital C*-algebra 
with a closed ideal $\I \lhd \A$ and canonical quotient map $q_\I \colon \A \to \A/\I$, and $\phi \colon \S \to \A/\I$ is a unital completely positive map, then for every finite-dimensional operator system $\E \subseteq \S$ there is a unital completely positive map $\psi \colon \E \to \A$ such that the diagram
\[
\xymatrix{
\A \ar[drr]^{q_\I} & & \\
\E \ar[u]^{\psi} \ar[rr]_{\phi|_\E} & & \A/ \I 
}
\]
is commutative.
We say that $\S$ has the \emph{weak expectation property} (WEP), if the canonical inclusion $\iota \colon \S \to \S^{dd}$ extends to a unital completely positive map $\phi \colon \I(\S) \to \S^{dd}$.
Here $\S^{dd}$ denotes the double dual operator system of $\S$.
We say that $\S$ has the \emph{double commutant expectation property} (DCEP), if for any unital complete order embedding $\iota \colon \S \to \B(H)$ there exists a unital completely positive extension $\phi \colon \I(\S) \to \iota(\S)'' \subseteq \B(H)$.
The following equivalences were shown in \cite{KPTT13}:
\begin{enumerate}
\item $\S$ is exact if and only if $\S$ is $(\min, {\rm el})$-nuclear.
\item $\S$ has the (OSLLP) if and only if $\S$ is $(\min, {\rm er})$-nuclear.
\item $\S$ has the (WEP) if and only if $\S$ is $({\rm el, \max})$-nuclear.
\item $\S$ has the (DCEP) if and only if $\S$ is $({\rm el}, {\rm c})$-nuclear.
\end{enumerate}
Using these characterisations, 
we will prove that $\Delta$-equivalence preserves the aforementioned operator system approximation properties.

We will need the following lemma.
Recall the following definition from \cite{KPTT11, KPTT13}.
A $\tau$-tensor product is called \emph{functorial} if every pair of unital completely positive maps $\phi_i \colon \S_i \to \T_i$, $i=1,2$, integrates to a unital completely positive map $\phi_1 \otimes \phi_2 \colon \S_1 \otimes_\tau \S_2 \to \T_1 \otimes_\tau \T_2$.

%%%%%%%%%%%%%%%%%%%%%%%%%%%%%%%%
\begin{lemma}\label{l_nuc}
Let $\S$ be an operator system that is $(\tau', \tau)$-nuclear for $\tau' \leq \tau$ such that:
\begin{enumerate}
\item $\tau'$ and $\tau$ are associative tensor products; or
\item $\tau'$ is associative and $\tau = {\rm el}$; or
\item $\tau'$ is associative and $\tau = {\rm er}$; or
\item $\tau' = {\rm el}$ and $\tau$ is associative; or
\item $\tau' = {\rm er}$ and $\tau$ is associative; or
\item $(\tau', \tau) = ({\rm el}, {\rm c})$.
\end{enumerate}
Then the operator system $M_n(\S)$ is $(\tau', \tau)$-nuclear.
\end{lemma}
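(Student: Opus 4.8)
The plan is to reduce the claim to the $(\tau',\tau)$-nuclearity of $\S$ itself, exploiting that ampliation by $M_n$ is compatible with each of the tensor products that occur. Three standing facts will be used: (a) $M_n$ is a nuclear C*-algebra, so $M_n\otimes_\sigma\cl W\simeq M_n(\cl W)$ for every operator system $\cl W$ and every operator system tensor product $\sigma$ (all tensor products with $M_n$ coincide, by \cite{KPTT11, KPTT13}); (b) the minimal and maximal tensor products are associative; and (c) $\I(M_n(\S))\simeq M_n(\I(\S))$, the matricial analogue of the fact recorded in Section~\ref{s_prel}, which follows from \cite[Corollary 4.6.12]{blm}. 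We also use freely that $M_n(-)$ is functorial, preserves complete order embeddings and complete order isomorphisms, and satisfies $M_n(\cl V\odot\cl W)=M_n(\cl V)\odot\cl W$.

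The key step is to produce, for each $\sigma$ occurring in the hypotheses (that is, $\sigma$ associative, or $\sigma\in\{\text{el},\text{er},\text{c}\}$), a complete order isomorphism $M_n(\S)\otimes_\sigma\R\simeq M_n(\S\otimes_\sigma\R)$ restricting to the identity on $M_n(\S)\odot\R=M_n(\S\odot\R)$, for every operator system $\R$. For $\sigma$ associative this is immediate from (a) and (b): $M_n(\S)\otimes_\sigma\R\simeq(M_n\otimes_\sigma\S)\otimes_\sigma\R\simeq M_n\otimes_\sigma(\S\otimes_\sigma\R)\simeq M_n(\S\otimes_\sigma\R)$. For $\sigma=\text{el}$, recall that $\cl V\otimes_{\rm el}\R$ is the closure of $\cl V\odot\R$ inside $\I(\cl V)\otimes_{\max}\R$; taking $\cl V=M_n(\S)$ and using (c) together with $M_n(\I(\S))\otimes_{\max}\R\simeq M_n(\I(\S)\otimes_{\max}\R)$ (from (a) and associativity of $\max$), one sees that both $M_n(\S)\otimes_{\rm el}\R$ and $M_n(\S\otimes_{\rm el}\R)$ identify with the closure of $M_n(\S)\odot\R$ in the single operator system $M_n(\I(\S)\otimes_{\max}\R)$, hence with each other. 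The case $\sigma=\text{er}$ is analogous, replacing the ambient $\I(\cl V)\otimes_{\max}\R$ by $\cl V\otimes_{\max}\I(\R)$ and using $M_n(\S)\otimes_{\max}\I(\R)\simeq M_n(\S\otimes_{\max}\I(\R))$. For $\sigma=\text{c}$, one uses instead the complete order embedding $\cl V\otimes_{\rm c}\R\subseteq\cl V\otimes_{\rm c}\ca_{\max}(\R)\simeq\cl V\otimes_{\max}\ca_{\max}(\R)$ furnished by items (i) and (iii) of \cite[Lemma 2.5]{KPTT13}; since $\ca_{\max}(\R)$ does not depend on the left factor, both $M_n(\S)\otimes_{\rm c}\R$ and $M_n(\S\otimes_{\rm c}\R)$ identify with the closure of $M_n(\S)\odot\R$ in $M_n(\S)\otimes_{\max}\ca_{\max}(\R)\simeq M_n(\S\otimes_{\max}\ca_{\max}(\R))$.

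With these identifications in hand the conclusion is immediate. In each of the six cases of the lemma both $\tau'$ and $\tau$ belong to the list consisting of the associative products together with $\text{el}$, $\text{er}$ and $\text{c}$, so the previous paragraph applies to both. Fixing an operator system $\R$ and using the $(\tau',\tau)$-nuclearity of $\S$, the identity on $\S\odot\R$ extends to a complete order isomorphism $\S\otimes_\tau\R\to\S\otimes_{\tau'}\R$; applying $M_n(-)$ and composing with the natural identifications yields a complete order isomorphism $M_n(\S)\otimes_\tau\R\to M_n(\S)\otimes_{\tau'}\R$ that restricts to the identity on $M_n(\S)\odot\R$. Since $\R$ is arbitrary, $M_n(\S)$ is $(\tau',\tau)$-nuclear. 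The main obstacle is the middle step — establishing $M_n(\S)\otimes_\sigma\R\simeq M_n(\S\otimes_\sigma\R)$ for the non-associative products $\text{el}$, $\text{er}$, $\text{c}$ — where one must rewrite the defining ambient operator system in a form to which the associativity of $\max$ applies, invoking $\I(M_n(\S))\simeq M_n(\I(\S))$ for $\text{el}$ and the $\ca_{\max}$-characterisation of the commuting tensor product.
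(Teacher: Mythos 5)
Your proof is correct, but it is organised differently from the paper's. The paper argues the six cases separately: in each mixed case it invokes the $(\tau',\tau)$-nuclearity of $\S$ in the middle of a chain of identifications and then closes a commuting triangle over an ambient object ($\I(M_n(\S))\otimes_{\max}\R$ or $M_n(\S)\otimes_{\max}\I(\R)$) to conclude that the canonical map between $M_n(\S)\otimes_\tau\R$ and $M_n(\S)\otimes_{\tau'}\R$ is a complete order isomorphism; the compatibility of ampliation with a non-associative product is made explicit only for ${\rm c}$ (equation (\ref{eq_neweq})). You instead isolate, independently of any nuclearity hypothesis on $\S$, the statement that $M_n(\S)\otimes_\sigma\R\simeq M_n(\S\otimes_\sigma\R)$ canonically (fixing $M_n(\S)\odot\R$) for every $\sigma$ that occurs -- associative, ${\rm el}$, ${\rm er}$ and ${\rm c}$ -- by exhibiting both sides as the induced operator system structure on the same subspace of a common ambient object ($M_n(\I(\S)\otimes_{\max}\R)$, $M_n(\S\otimes_{\max}\I(\R))$ and $M_n(\S\otimes_{\max}\ca_{\max}(\R))$, respectively), and you then deduce all six cases at once by applying the functor $M_n(-)$ to the nuclearity isomorphism $\S\otimes_\tau\R\simeq\S\otimes_{\tau'}\R$. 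The ingredients are the same as in the paper (nuclearity of $M_n$, associativity of $\max$, the identification $\I(M_n(\S))\simeq M_n(\I(\S))$, and \cite[Lemma 2.5]{KPTT13} for the commuting tensor product -- note that the derivation of (\ref{eq:c}) indeed works verbatim with an arbitrary operator system in the left slot, which is what your ${\rm c}$-case uses), but your modular factorisation buys uniformity and a slightly stronger conclusion: it covers any pair $\tau'\leq\tau$ drawn from the class consisting of the associative products together with ${\rm el}$, ${\rm er}$ and ${\rm c}$ (for instance $({\rm er},{\rm c})$), not only the six combinations listed in the lemma.
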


\begin{proof}
The nuclearity of $M_n$ implies
\[
M_n \otimes_{\tau} \cl S\simeq M_n \otimes \cl S, \ \ \ n\in \bb{N}. 
\]
Further, by \cite[Paragraph 4.2.10]{blm}, the embedding $M_n(\S) \hookrightarrow M_n(\I(\S))$ 
extends to a complete order isomorphism 
\begin{equation}\label{eq_IMnS}
\I(M_n(\S)) \simeq M_n(\I(\S)), \ \ \ n\in \bb{N}.
\end{equation}

We fix an operator system $\cl R$. 

\smallskip

\noindent 
(i) Since $M_n$ is nuclear, we have the complete order isomorphisms
\begin{align*}
M_n(\T) \otimes_\tau \R 
& \simeq 
(M_n \otimes \T) \otimes_\tau \R
\simeq
M_n \otimes_\tau \T \otimes_\tau \R 
\simeq
M_n \otimes (\T \otimes_\tau \R)\\
& \simeq 
M_n \otimes (\T \otimes_{\tau'} \R) 
\simeq
M_n \otimes_{\tau'} \T \otimes_{\tau'} \R
\simeq
(M_n \otimes \T) \otimes_{\tau'} \R
\simeq
M_n(\T) \otimes_{\tau'} \R.
\end{align*}

\smallskip

\noindent
(ii) Using (\ref{eq_IMnS}), the nuclearity of $M_n$ (and thus also preservation of inclusions by $M_n \otimes_{\max} -$), the associativity of $\tau'$ and the 
$(\tau', {\rm el})$-nuclearity of $\S$, we have the complete order isomorphisms
\begin{align*}
M_n(\S) \otimes_{\tau'} \R 
& \simeq 
M_n \otimes_{\tau'} (\S \otimes_{\tau'} \R)
\simeq
M_n \otimes_{\max} (\S \otimes_{\rm el} \R) \\
& \hookrightarrow
M_n \otimes_{\max} (\I(\S) \otimes_{\max} \R)
\simeq
M_n(\I(S)) \otimes_{\max} \R
\simeq
\I(M_n(\S)) \otimes_{\max} \R.
\end{align*}
Therefore we have the following diagram that fixes $M_n(\S) \odot \R$:
\[
\xymatrix{
M_n(\S) \otimes_{\rm el} \R \ar[rr] \ar[rrd] & & M_n(\S) \otimes_{\tau'} \R \ar[d] \\
& & \I(M_n(\S)) \otimes_{\max} \R,
}
\]
where the horizontal arrow is given by $\tau' \leq {\rm el}$, the diagonal arrow is a complete order embedding by the definition of $\otimes_{\rm el}$, and the vertical arrow was shown to be a complete order embedding.
This shows that the horizontal map is a complete order isomorphism.

\smallskip

\noindent
(iii) Using the nuclearity of $M_n$ (and thus also preservation of inclusions by $M_n \otimes_{\max} -$), the associativity of $\tau'$ and 
the $(\tau', {\rm er})$-nuclearity of $\S$, we have the complete order isomorphisms
\begin{align*}
M_n(\S) \otimes_{\tau'} \R 
& \simeq 
M_n \otimes_{\tau'} (\S \otimes_{\tau'} \R)
\simeq
M_n \otimes_{\max} (\S \otimes_{\rm er} \R) \\
& \hookrightarrow
M_n \otimes_{\max} (\S \otimes_{\max} \I(\R))
\simeq
M_n(S) \otimes_{\max} \I(\R).
\end{align*}
Therefore we have the following diagram that fixes $M_n(\S) \odot \R$:
\[
\xymatrix{
M_n(\S) \otimes_{\rm er} \R \ar[rr] \ar[rrd] & & M_n(\S) \otimes_{\tau'} \R \ar[d] \\
& & M_n(\S) \otimes_{\max} \I(\R),
}
\]
where the horizontal arrow is given by $\tau' \leq {\rm er}$, the diagonal arrow is a complete order embedding by the definition of $\otimes_{\rm er}$, and the vertical arrow was shown to be a complete order embedding.
This shows that the horizontal map is a complete order isomorphism.

\smallskip

\noindent
(iv) Using (\ref{eq_IMnS}), the nuclearity of $M_n$ (and thus also preservation of inclusions by $M_n \otimes_{\max} -$), the associativity of $\tau$ and the $({\rm el}, \tau)$-nuclearity of $\S$, we have the complete order isomorphisms
\begin{align*}
M_n(\S) \otimes_{\tau} \R 
& \simeq 
M_n \otimes_{\tau} (\S \otimes_{\tau} \R)
\simeq
M_n \otimes_{\max} (\S \otimes_{\rm el} \R) \\
& \hookrightarrow
M_n \otimes_{\max} (\I(\S) \otimes_{\max} \R)
\simeq
M_n(\I(S)) \otimes_{\max} \R
\simeq
\I(M_n(\S)) \otimes_{\max} \R.
\end{align*}
Therefore we have the following diagram that fixes $M_n(\S) \odot \R$:
\[
\xymatrix{
M_n(\S) \otimes_{\tau} \R \ar[rr] \ar[rrd] & & M_n(\S) \otimes_{\rm el} \R \ar[d] \\
& & \I(M_n(\S)) \otimes_{\max} \R,
}
\]
where the horizontal arrow is given by ${\rm el} \leq \tau$, the vertical arrow is a complete order embedding by the definition of $\otimes_{\rm el}$, and the diagonal arrow was shown to be a complete order embedding.
This shows that the horizontal map is a complete order isomorphism.

\smallskip

\noindent
(v) Using the nuclearity of $M_n$ (and thus also preservation of inclusions by $M_n \otimes_{\max} -$), the associativity of $\tau$ and 
the $({\rm er}, \tau)$-nuclearity of $\S$, we have the complete order isomorphisms
\begin{align*}
M_n(\S) \otimes_{\tau} \R 
& \simeq 
M_n \otimes_{\tau} (\S \otimes_{\tau} \R)
\simeq
M_n \otimes_{\max} (\S \otimes_{\rm er} \R) \\
& \hookrightarrow
M_n \otimes_{\max} (\S \otimes_{\max} \I(\R))
\simeq
M_n(S) \otimes_{\max} \I(\R).
\end{align*}
Therefore we have the following diagram that fixes $M_n(\S) \odot \R$:
\[
\xymatrix{
M_n(\S) \otimes_{\tau} \R \ar[rr] \ar[rrd] & & M_n(\S) \otimes_{\rm er} \R \ar[d] \\
& & M_n(\S) \otimes_{\max} \I(\R),
}
\]
where the horizontal arrow is given by ${\rm er} \leq \tau$, the vertical arrow is a complete order embedding by the definition of $\otimes_{\rm er}$, and the diagonal arrow was shown to be a complete order embedding.
This shows that the horizontal map is a complete order isomorphism.

\smallskip

\noindent
(vi) First we show that there is a complete order isomorphism
\begin{equation}\label{eq_neweq}
M_n(\S) \otimes_{\rm c} \R \simeq (M_n \otimes \S) \otimes_{\rm c} \R \simeq M_n \otimes (\S \otimes_{\rm c} \R)
\end{equation}
that extends the canonical inclusions of the algebraic tensor product.
Towards this end, by \cite[Lemma 2.5]{KPTT13} we have that
\[
\S \otimes_{\rm c} \R \hookrightarrow \ca_{\max}(\S) \otimes_{\max} \ca_{\max}(\R).
\]
Using the nuclearity of $M_n$ and the preservation of inclusions by $M_n \otimes -$, we have that
\begin{align*}
M_n \otimes (\S \otimes_{\rm c} \R) 
& \hookrightarrow
M_n \otimes (\ca_{\max}(\S) \otimes_{\max} \ca_{\max}(\R)) \\
& \simeq
M_n \otimes_{\max} (\ca_{\max}(\S) \otimes_{\max} \ca_{\max}(\R)) 
\simeq
(M_n \otimes_{\max} \ca_{\max}(\S)) \otimes_{\max} \ca_{\max}(\R),
\end{align*}
where we used the associativity of $\otimes_{\max}$.
By construction and \cite[Lemma 2.5]{KPTT13}, this map takes values in
\[
\left[(M_n \otimes_{\max} \ca_{\max}(\S)) \odot \iota_{\max}(\R) \right]^{-(M_n \otimes_{\max} \ca_{\max}(\S)) \otimes_{\max} \ca_{\max}(\R) }
\simeq
(M_n \otimes_{\max} \ca_{\max}(\S)) \otimes_{\rm c} \R,
\]
as $M_n \otimes_{\max} \ca_{\max}(\S)$ is a C*-algebra (see equation (\ref{eq:c2})).
In particular, we get a complete order embedding that takes values in
\[
\left[ M_n \odot \iota_{\max}(\S) \right]^{-M_n \otimes_{\max} \ca_{\max}(\S)}  \otimes_{\rm c} \R
\simeq
(M_n \otimes \S) \otimes_{\rm c} \R,
\]
where we used the nuclearity of $M_n$.
We thus obtain a complete order embedding that preserves the copies of $M_n \odot (\S \odot \R) = (M_n \odot \S) \odot \R$, and thus it is surjective.
The required identification (\ref{eq_neweq}) now follows. 

Now the proof follows as in item (iv).
That is, use (\ref{eq_IMnS}), the nuclearity of $M_n$ (and thus also preservation of inclusions by $M_n \otimes_{\max} -$), the associativity of the maximal tensor product and the $({\rm el}, {\rm c})$-nuclearity of $\S$, to get the complete order isomorphisms
\begin{align*}
M_n(\S) \otimes_{\rm c} \R 
& \simeq 
M_n \otimes (\S \otimes_{\rm c} \R)
\simeq
M_n \otimes_{\max} (\S \otimes_{\rm el} \R) \\
& \hookrightarrow
M_n \otimes_{\max} (\I(\S) \otimes_{\max} \R)
\simeq
M_n(\I(S)) \otimes_{\max} \R
\simeq
\I(M_n(\S)) \otimes_{\max} \R.
\end{align*}
Therefore we have the following diagram that fixes $M_n(\S) \odot \R$:
\[
\xymatrix{
M_n(\S) \otimes_{\rm c} \R \ar[rr] \ar[rrd] & & M_n(\S) \otimes_{\rm el} \R \ar[d] \\
& & \I(M_n(\S)) \otimes_{\max} \R,
}
\]
where the horizontal arrow is given by ${\rm el} \leq {\rm c}$, the vertical arrow is a complete order embedding by the definition of $\otimes_{\rm el}$, and the diagonal arrow was shown to be a complete order embedding.
This shows that the horizontal map is a complete order isomorphism.
\end{proof}

%%%%%%%%%%%%%%%%%%%%%%%%%%%%%%%%
\begin{remark}
We note that item (vi) above would follow from item (iv), if $\otimes_{\rm c}$ is associative.
However, this is not known.
\end{remark}

%%%%%%%%%%%%%%%%%%%%%%%%%%%%%%%%
\begin{theorem}\label{C:ten pro}
Let $\S$ and $\T$ be operator systems such that $\S \sim_{\Delta} \T$.
Let $(\tau', \tau)$ be a pair of functorial tensor products with $\tau' \leq \tau$ such that:
\begin{enumerate}
\item $\tau'$ and $\tau$ are associative tensor products; or
\item $\tau'$ is associative and $\tau = {\rm el}$; or
\item $\tau'$ is associative and $\tau = {\rm er}$; or
\item $\tau' = {\rm el}$ and $\tau$ is associative; or
\item $\tau' = {\rm er}$ and $\tau$ is associative; or
\item $(\tau', \tau) = ({\rm el}, {\rm c})$.
\end{enumerate}
Then $\S$ is $(\tau', \tau)$-nuclear if and only if $\T$ is $(\tau', \tau)$-nuclear.
\end{theorem}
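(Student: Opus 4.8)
The plan is to establish the nontrivial direction: that $(\tau',\tau)$-nuclearity of $\S$ forces $(\tau',\tau)$-nuclearity of $\T$, the converse then following from the symmetry of $\sim_\Delta$ (Proposition \ref{p_coarser}). By Proposition \ref{P:complete} we may assume $\S$ and $\T$ are complete, and by Proposition \ref{P:reduction} we may fix unital complete order embeddings under which $\S \sim_{\rm TRO} \T$ via a closed TRO $\M$ with $[\M^*\M] = \A_\S$ and $[\M\M^*] = \A_\T$. Since $\S$ is $(\tau',\tau)$-nuclear, Lemma \ref{l_nuc} applies -- this is exactly why the six admissible cases are singled out -- and yields that $M_k(\S)$ is $(\tau',\tau)$-nuclear for every $k \in \bb N$.

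The next step is to produce an approximate matricial factorisation of $\id_\T$ through the amplifications of $\S$. Applying Remark \ref{R:m.a.i.} to the TRO-equivalence $\T \sim_{\rm TRO} \S$ implemented by $\M^*$, and using that $\S$ and $\T$ are unital so that the relevant nets may be taken to be sequences supported on common entries $k_n$, one obtains completely positive complete contractions
\[
\alpha_n \colon \T \to M_{k_n}(\S), \qquad \beta_n \colon M_{k_n}(\S) \to \T,
\]
each of which is a conjugation by a finitely supported contractive string of elements of $\M$, such that $\beta_n \circ \alpha_n \to \id_\T$ in the point-norm topology.

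Now fix an arbitrary operator system $\R$. Tensoring with $\id_\R$ -- so that $\alpha_n \otimes \id_\R$ and $\beta_n \otimes \id_\R$ are completely positive and completely contractive with respect to both $\otimes_\tau$ and $\otimes_{\tau'}$ -- the argument proceeds as follows. Let $m \in \bb N$ and let $u \in M_m(\T \odot \R)$ be positive in $M_m(\T \otimes_{\tau'} \R)$. Then $(\alpha_n \otimes \id_\R)^{(m)}(u)$ is positive in $M_m(M_{k_n}(\S) \otimes_{\tau'} \R)$, hence by the $(\tau',\tau)$-nuclearity of $M_{k_n}(\S)$ also positive in $M_m(M_{k_n}(\S) \otimes_\tau \R)$; applying $\beta_n \otimes \id_\R$ shows that $u_n := (\beta_n\alpha_n \otimes \id_\R)^{(m)}(u)$ is positive in $M_m(\T \otimes_\tau \R)$. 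Since $u$ lies in the algebraic tensor product, $\beta_n\alpha_n \to \id_\T$ in point-norm, and $\|s\otimes r\|_\tau \leq \|s\|\,\|r\|$, we get $u_n \to u$ in $M_m(\T \otimes_\tau \R)$; as the positive cone is closed, $u$ is positive in $M_m(\T \otimes_\tau \R)$. Together with the inclusion of cones in the opposite direction, which is automatic from $\tau' \leq \tau$, this shows that the $\tau$- and $\tau'$-matrix cones on $\T \odot \R$ coincide at every level, hence the two order unit norms coincide there, and the identity map of $\T \odot \R$ extends to a complete order isomorphism $\T \otimes_\tau \R \simeq \T \otimes_{\tau'} \R$. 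As $\R$ was arbitrary, $\T$ is $(\tau',\tau)$-nuclear.

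The main obstacle is the parenthetical claim in the third paragraph: $\alpha_n$ and $\beta_n$ are completely positive complete contractions but not unital, so the functoriality of the tensor products -- which is phrased for unital completely positive maps -- does not literally apply. I expect this to be handled by exploiting that $\alpha_n$ and $\beta_n$ are conjugations by elements of the C*-algebra generated by $\M$; the identities $[\M^*\ca(\psi(\T))\M] = \ca(\phi(\S))$ and $[\M\ca(\phi(\S))\M^*] = \ca(\psi(\T))$ from Proposition \ref{P:reduction} show that these conjugations extend to completely positive maps at the level of the generated C*-algebras, which then interact correctly with each of $\min$, ${\rm er}$, ${\rm el}$, ${\rm c}$, $\max$; alternatively, the non-unitality can be absorbed into an auxiliary $(\tau',\tau)$-nuclear direct summand. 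Checking this in all six cases -- in particular for $\otimes_{\rm c}$, whose associativity is not available -- is where the genuine work lies.
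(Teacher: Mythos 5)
Your proposal is correct and follows essentially the same route as the paper: reduce via Propositions \ref{P:complete} and \ref{P:reduction}, use Lemma \ref{l_nuc} to get $(\tau',\tau)$-nuclearity of $M_{k}(\S)$, factor $\id_\T$ approximately through $M_{k_n}(\S)$ by conjugating with a semi-unit of the TRO, tensor with $\id_\R$ by functoriality, and pass to the limit (the paper packages the limit as a point-norm/BW limit giving a completely positive inverse of the canonical map $\T\otimes_\tau\R\to\T\otimes_{\tau'}\R$, which is only cosmetically different from your closed-cone argument). The ``obstacle'' you flag about the maps $\alpha_n,\beta_n$ being non-unital is not treated as an issue in the paper either --- the proof there simply applies functoriality to these completely positive complete contractions --- so no additional work beyond the standard fact that the relevant tensor products tensorize completely positive (not just unital) maps is needed.
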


\begin{proof}
Without loss of generality, 
assume that $\S \subseteq \B(H)$ and $\T \subseteq \B(K)$ are complete operator systems, 
TRO-equivalent by a closed TRO $\M \subseteq \B(H, K)$; thus, 
\[
[\M^* \T \M] = \S \qand [\M \S \M^*] = \T.
\]
Fix an operator system $\cl R$ and assume that $\cl S$ is $(\tau', \tau)$-nuclear. 
We claim that the map
\[
\T \otimes_{\tau} \R \ni t \otimes r \mapsto t \otimes r \in \T \otimes_{\tau'} \R
\]
is (unital and) completely isometric.
By definition it is completely positive, and thus it suffices to show that is has a completely positive inverse.

Towards this end, let $\un{m} \in R(\M)$ be a semi-unit for $\M$ such that
$\sum_{n=1}^{\infty} m_{n} m_{n}^* m = m$ for all $m \in \M$.
In particular, we have
$\sum_{n=1}^{\infty} m_n m_n^* t = t$, $t \in \T$, and also $\sum_{n=1}^{\infty} t m_n m_n^* = t$, $t \in \T$, by taking adjoints. 
Hence, defining
\[
\phi_n(t) = [m_{i}^* t m_{j}]_{i,j=1}^n
\qand
\psi_n([s_{ij}]) = \sum_{i, j = 1}^n m_{i} s_{i,j} m_{j}^*,
\]
we obtain an approximately commutative diagram 
\[
\xymatrix@C=1.5cm{
\T \ar[dr]_{\phi_{n}} \ar@{-->}[rr]^{\id_\T} & & \T \\
& M_{n}(\S) \ar[ur]_{\psi_{n}} &
}
\]
of completely positive completely contractive maps.
Let $\R$ be an operator system and, using Lemma \ref{l_nuc}, let 
$$\si_n \colon M_{n}(\S) \otimes_{\tau'} \R \to M_{n}(\S) \otimes_{\tau} \R$$
be a complete order isomorphism. 
Since $\tau$ and $\tau'$ are assumed to be functorial, 
the composition $\Phi_n := (\psi_n \otimes \id_\R) \circ \si_n \circ (\phi_n \otimes \id_\R)$, that is,
\[
\Phi_n \colon \T \otimes_{\tau'} \R \to M_{n}(\S) \otimes_{\tau'} \R \to M_{n}(\cl S) \otimes_{\tau} \R \to \T \otimes_\tau \R,
\]
is well-defined and completely contractive; by definition,
\[
\Phi_n(t \otimes r) = \sum_{i,j =1}^n m_{i} m_{i}^* t m_{j} m_{j}^* \otimes r \foral t\in \cl T, r \in \R, n\in \bb{N}.
\]
It follows that the sequence $(\Phi_n)_{n\in \bb{N}}$ converges in the point-norm topology, and hence in the Bounded-Weak topology \cite[Chapter 7]{Pau02}, 
to a completely positive completely contractive map $\Phi$.
Since 
\[
\sum_{n, k} m_n m_n^* t m_k m_k^* = t \foral t \in \T,
\]
we have that $\Phi(t \otimes r) = t \otimes r$, and thus $\Phi$ is the identity map with domain $\T \otimes_{\tau'} \R$ and range $\T \otimes_{\tau} \R$, and the proof is complete.
\end{proof}

Since $\max$, $\min$, ${\rm el}$ and ${\rm er}$ are functorial \cite{KPTT11}, we have the following corollary.

%%%%%%%%%%%%%%%%%%%%%%%%%%%%%%%%
\begin{corollary}
Nuclearity, exactness, the (OSLLP), the (WEP), and the (DCEP) are invariants of $\Delta$-equivalence of operator systems.
\end{corollary}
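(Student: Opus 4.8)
The plan is to read the corollary off Theorem \ref{C:ten pro} by invoking, for each of the five properties, the corresponding nuclearity characterisation recorded just before Lemma \ref{l_nuc}. Recall that $\S$ is nuclear precisely when it is $(\min,\max)$-nuclear (by definition of nuclearity), and that, by \cite{KPTT13}, $\S$ is exact if and only if it is $(\min,{\rm el})$-nuclear, $\S$ has the (OSLLP) if and only if it is $(\min,{\rm er})$-nuclear, $\S$ has the (WEP) if and only if it is $({\rm el},\max)$-nuclear, and $\S$ has the (DCEP) if and only if it is $({\rm el},{\rm c})$-nuclear.

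Next I would verify that each of the pairs
\[
(\min,\max), \quad (\min,{\rm el}), \quad (\min,{\rm er}), \quad ({\rm el},\max), \quad ({\rm el},{\rm c})
\]
satisfies the hypotheses of Theorem \ref{C:ten pro}. Using the comparison $\min \leq {\rm er},\ {\rm el} \leq {\rm c} \leq \max$ from \cite{KPTT11}, in every case we have $\tau' \leq \tau$; moreover $(\min,\max)$ is covered by case (i) since $\min$ and $\max$ are associative, $(\min,{\rm el})$ by case (ii), $(\min,{\rm er})$ by case (iii), $({\rm el},\max)$ by case (iv), and $({\rm el},{\rm c})$ is precisely case (vi). Each tensor product appearing here is functorial: $\min$, $\max$, ${\rm el}$ and ${\rm er}$ by \cite{KPTT11}, and the commuting tensor product ${\rm c}$ as well (see \cite{KPTT11, KPTT13}). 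Hence Theorem \ref{C:ten pro} is applicable to each of these pairs.

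Finally, assume $\S \sim_{\Delta} \T$. Applying Theorem \ref{C:ten pro} to $(\min,\max)$ yields that $\S$ is nuclear if and only if $\T$ is; to $(\min,{\rm el})$ that $\S$ is exact if and only if $\T$ is; to $(\min,{\rm er})$ that $\S$ has the (OSLLP) if and only if $\T$ does; to $({\rm el},\max)$ that $\S$ has the (WEP) if and only if $\T$ does; and to $({\rm el},{\rm c})$ that $\S$ has the (DCEP) if and only if $\T$ does. This establishes the corollary. There is no genuine obstacle in this argument: the content is entirely carried by Theorem \ref{C:ten pro} and the equivalences of \cite{KPTT13}, and the only point requiring slight care is confirming the functoriality of the commuting tensor product used for the (DCEP) case.
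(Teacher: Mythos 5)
Your proposal is correct and follows essentially the same route as the paper, which deduces the corollary from Theorem \ref{C:ten pro} together with the nuclearity characterisations of exactness, (OSLLP), (WEP) and (DCEP) from \cite{KPTT13}, noting the functoriality of the tensor products involved. Your explicit check that the commuting tensor product ${\rm c}$ is functorial (needed for the $({\rm el},{\rm c})$ case) is a welcome extra detail, since the paper's one-line justification only names $\min$, $\max$, ${\rm el}$ and ${\rm er}$.
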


%%%%%%%%%%%%%%%%%%%%%%%%%%%%%%%%
\section{Function systems}\label{s_fs}
%%%%%%%%%%%%%%%%%%%%%%%%%%%%%%%%

A \emph{function system} is an operator subsystem of the (commutative) C*-algebra ${\rm C}(X)$ of all continuous functions on a compact Hausdorff space $X$ that separates the points.
The prototypical example is the space $A(K)$ of continuous affine functions on a compact convex subset $K$ of a locally convex space.
In \cite[Theorem 4.4.3]{DK19}, K. Davidson and M. Kennedy 
showed that $\ca_{\max}(A(K)) \simeq {\rm C}(K)$ as (unital) C*-algebras.
The \emph{\v{S}ilov ideal} in ${\rm C}(K)$, corresponding to $A(K)$, arises from the largest open subset of $K$ that can be removed, retaining the complete order embedding of $A(K)$. 
More precisely, there exists a closed subset $\partial A(K) \subseteq K$ so that the restriction map
\[
{\rm C}(K) \to {\rm C}( \partial A(K) ); \ f \mapsto f|_{\partial A(K)}
\]
is a complete order embedding when restricted to $A(K)$. 
Since affine functions attain their supremum at the extreme points of $K$, we have that $\partial A(K) \subseteq \ol{\partial K}$.
Conversely, since $\ol{\partial K}$ is the smallest set on which all affine functions attain their supremum, we have that 
\[
\partial A(K) = \ol{\partial K}
\qand
\cenv(\S) \simeq {\rm C}( \ol{\partial K} );
\]
in particular, the C*-envelope of a function system is an abelian C*-algebra. 

%%%%%%%%%%%%%%%%%%%%%%%%%%%%%%%%
\begin{theorem}\label{C:fun sys}
Let $\S$ and $\T$ be function systems.
Then $\S \sim_{\Delta} \T$ if and only if $\ol{\S} \simeq \ol{\T}$ up to complete order isomorphism.
\end{theorem}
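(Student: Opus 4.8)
The plan is the following. By Proposition~\ref{P:complete} the relation $\sim_\Delta$ depends only on the closures, so we may assume that $\S$ and $\T$ are complete. The implication $\ol\S\simeq_{\rm c.o.i.}\ol\T\Rightarrow\S\sim_\Delta\T$ is immediate from Proposition~\ref{p_coarser}. For the converse I would invoke Proposition~\ref{P:reduction} to realise $\S$ and $\T$ concretely as $\S\subseteq\B(H)$, $\T\subseteq\B(K)$, TRO-equivalent via a non-degenerate TRO $\M\subseteq\B(H,K)$ with $[\M^*\M]=\A_\S$, $[\M\M^*]=\A_\T$, $\ca(\S)\simeq\cenv(\S)$ and $\ca(\T)\simeq\cenv(\T)$. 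Since $\S$ and $\T$ are function systems, their C*-envelopes — hence the ambient C*-algebras $\ca(\S)$, $\ca(\T)$ in this representation — are abelian; in particular $\A_\S$ and $\A_\T$ are unital \emph{abelian} C*-algebras, and $\M\M^*$, $\M^*\M$, $\S$, $\T$ all sit inside commutative C*-algebras.

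The crux is a structural analysis of $\M$. Regard $\M$ as a full right Hilbert $\A_\S$-module with $\langle m,n\rangle:=m^*n$; its algebra of compact adjointable operators is $[\M\M^*]=\A_\T$, which is unital — forcing $\M$ to be finitely generated projective — and abelian — forcing the corresponding Hermitian bundle over the spectrum $Z$ of $\A_\S$ to have rank at most $1$ on each fibre, and fullness pins the rank to $1$. Thus $\M$ is the section module of a Hermitian \emph{line} bundle over $Z$. Feeding a finite trivialising cover of $Z$, unit local sections $e_i$, and a subordinate partition of unity $\chi_i$ into this picture, the sections $m_i:=\chi_i^{1/2}e_i\in\M$ ($i=1,\dots,k$) form a \emph{doubly normalised frame}: $\sum_i m_i^*m_i=1_{\A_\S}$ \emph{and} $\sum_i m_im_i^*=1_{\A_\T}$ (both identities hold precisely because the rank is $1$).

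With such a frame, set $E:=(m_1,\dots,m_k)^{\mathrm t}\in\B(H,K^k)$. The first identity says $E^*E=1_{\A_\S}=I_H$, so $E$ is an isometry; the second says $EE^*=[m_im_j^*]=:P$ is a projection in $M_k(\A_\T)$ of pointwise trace $1_{\A_\T}$. Hence conjugation by $E$ is a $*$-isomorphism of $\B(H)$ onto $PM_k(\B(K))P$, and the TRO relations $[\M^*\T\M]=\S$, $[\M\S\M^*]=\T$ show it carries $\S$ \emph{onto} $PM_k(\T)P$; so $\S\simeq_{\rm c.o.i.}PM_k(\T)P$, and moreover $PM_k(\ca(\T))P$ is commutative. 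Because $P$ has constant rank one over the \emph{abelian} algebra $\A_\T$, the fibrewise trace $W=PWP\mapsto\mathrm{Tr}\circ W$ is a unital $*$-isomorphism of $PM_k(\ca(\T))P$ onto $\ca(\T)$; restricted to the operator subsystem $PM_k(\T)P$ it is a unital complete order embedding whose range is exactly $\T$ — the containment follows from $P\in M_k(\A_\T)$ together with the $\A_\T$-bimodule structure of $\T$, and surjectivity follows by evaluating on the diagonals $\diag(t,\dots,t)$, $t\in\T$, for which the trace returns $\big(\sum_i m_im_i^*\big)t=t$. Composing, $\S\simeq_{\rm c.o.i.}PM_k(\T)P\simeq_{\rm c.o.i.}\T$, which unwinds to $\ol\S\simeq_{\rm c.o.i.}\ol\T$.

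I expect the main obstacle to be exactly this last equivalence $PM_k(\T)P\simeq_{\rm c.o.i.}\T$: one has to see that the line-bundle ``twist'' relating $\S$ and $\T$ is invisible at the level of a function system. Commutativity of the C*-envelope enters essentially here, twice — it is what makes $\M$ a line bundle (so that the frame operator $E$ can be taken to be a genuine isometry rather than merely a coisometry) and what makes the corner $PM_k(\ca(\T))P$ abelian and recoverable by a scalar-valued trace; for a noncommutative operator system none of this survives, in keeping with Morita equivalence being strictly coarser than isomorphism in general. A secondary, routine point is to verify that each of the identifications above is a \emph{unital} complete order isomorphism, which is automatic since a $*$-isomorphism of C*-algebras restricts to a unital complete order isomorphism of operator subsystems.
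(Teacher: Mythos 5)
Your argument is correct, and it reaches the same reduction (Proposition \ref{P:reduction}, then transporting along the TRO $\M$ with commutativity of $\cenv(\S)$ and $\cenv(\T)$ as the essential input), but the implementation is genuinely different from the paper's. The paper simply picks a row contraction $(m_n)_n$ in $\M$ with $\sum_n m_n m_n^*=1_{\T}$ (normalised on one side only, which exists whenever $[\M\M^*]$ is unital), defines $\vartheta(a)=\sum_n m_n a m_n^*$, and verifies by a two-line computation -- using that every $m_n^*m_k\in\A_{\S}$ commutes with $\ca(\phi(\S))$ -- that $\vartheta$ is multiplicative and that an analogous map built from a column contraction normalised on the other side is its inverse; restricting gives the unital complete order isomorphism. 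You instead invoke Serre--Swan: unitality of $\K(\M)=[\M\M^*]=\A_{\T}$ gives finite generation and projectivity, commutativity of $\A_{\T}$ forces fibre rank $\le 1$, and fullness forces rank $1$, so $\M$ is the section module of a Hermitian line bundle and a partition of unity produces a finite frame normalised on \emph{both} sides, $\sum_i m_i^*m_i=1_{\S}$ and $\sum_i m_im_i^*=1_{\T}$. From this the isomorphism is obtained structurally, as conjugation by the isometry $E$ followed by the fibrewise trace on the corner $PM_k(\ca(\T))P$ of the pointwise rank-one projection $P=EE^*$; note that the composite is exactly the paper's map $s\mapsto\sum_i m_i s m_i^*$, so the two proofs differ in justification rather than in the final formula. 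What your route buys is structural insight (the equivalence TRO is a line-bundle module, and $\S$ is realised as a rank-one corner of $M_k(\T)$), and it makes unitality and complete positivity of the isomorphism manifest without checking multiplicativity; what it costs is heavier machinery and several standard-but-unproved steps (fibrewise nondegeneracy of the inner product, local unit sections, the double normalisation -- which, as your own framework shows, is special to the rank-one situation and fails already for $\M=C_2(\bC)$), whereas the paper's computation is elementary and needs no finiteness of the frame. Two minor points to tidy: the \lq\lq easy\rq\rq\ direction should be read with the paper's convention that the order isomorphism is unital (so that Proposition \ref{p_coarser} applies), and in the trace computations you should say explicitly that cyclicity of the fibrewise trace is legitimate because all entries lie in the commutative algebra $\ca(\psi(\T))$.
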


\begin{proof}
Without loss of generality, we assume that $\S$ and $\T$ are complete operator systems.
Suppose that $\S \sim_{\Delta} \T$.
Proposition \ref{P:reduction} yields complete order isomorphisms
$\phi \colon \S \to \B(H)$ and $\psi \colon \T \to \B(K)$
and a closed TRO $\M$ so that $\phi(\S) \sim_{\rm TRO} \psi(\T)$ via $\M$,
\[
\cenv(\S) \simeq \ca(\phi(\S)) = [\M^* \ca(\psi(\T)) \M]
\qand
\cenv(\T) \simeq \ca(\psi(\T)) = [\M \ca(\phi(\S)) \M^*],
\]
while $\A_\S \simeq [\M^* \M]$ and $\A_\T \simeq [\M \M^*]$.
Since $\psi(\T)$ is unital, there exists a row contraction $\un{m}_1 = (m_{1, n})_n \in R(\M)$ such that
\[
\sum_{n} m_{1, n} m_{1, n}^* = 1_{\psi(\cl T)}.
\]
Let
\[
\vartheta \colon \ca(\phi(\S)) \to \ca(\psi(\T)); \ a \mapsto \un{m}_1 \cdot (a \otimes I_{\ell^2}) \cdot \un{m}_1^* = \sum_n m_{1, n} a m_{1, n}^*;
\]
then $\vartheta$ is a unital completely contractive map.
We first show that $\vartheta$ is a homomorphism.
By \cite[Proposition 1.2.4]{blm} it will follow that $\vartheta$ is a $*$-homomorphism.
To this end, we have that every $m_{1,n}^* m_{1,k}$ is in $\A_\S$ and thus commutes with $a \in \ca(\phi(\S))$.
Hence, if $a,b\in \ca(\phi(\cl S))$, then 
\begin{align*}
\vartheta(a)  \cdot \vartheta(b)
& = 
\sum_{n, k} m_{1, n} a (m_{1, n}^* m_{1, k}) b m_{1, k}^* \\
& =
\sum_{n, k} (m_{1, n} m_{1, n}^*) m_{1, k} a b m_{1, k}^* 
= 
\sum_{k} m_{1, k} a b m_{1, k}^*
=
\vartheta(a b).
\end{align*}

Similarly, there exists a column contraction $\un{m}_2 = (m_{2, n})_n \in C(\M)$ such that
\[
\sum_{n} m_{2,n}^* m_{2,n}  = 1_{\phi(\cl S)}.
\]
Likewise we can define the homomorphism
\[
\varphi \colon \ca(\psi(\T)) \to \ca(\psi(\S)); \ c \mapsto \sum_n m_{2,n}^* c m_{2,n}.
\]
Since $m_{2, n}^* m_{1, k}$ is in $\A_\S$, it commutes with every $a \in \ca(\phi(\S))$, and a direct computation yields
\begin{align*}
(\varphi \circ \vartheta)(a)
& = 
\sum_{n} \sum_{k} (m_{2, n}^* m_{1, k}) \cdot a \cdot m_{1, k}^* m_{2, n} 
 = 
\sum_{n} \sum_{k} a m_{2, n}^* m_{1, k} m_{1, k}^* m_{2, n} \\
& = 
a \sum_n m_{2, n}^* \cdot \left(\sum_{k} m_{1, k} m_{1, k}^*\right) \cdot m_{2, n} 
 =
a \sum_{n} m_{2, n}^* m_{2, n} = a.
\end{align*}
Hence $\varphi$ is the inverse of $\vartheta$ and thus $\vartheta$ (and likewise $\varphi$) is a $*$-isomorphism.
Their restrictions to $\phi(\S)$ and $\psi(\T)$ are unital completely isometric maps that provide the complete order isomorphism between $\S$ and $\T$.
\end{proof}

The proof of Theorem \ref{C:fun sys} requires only commutativity and can be extended to centres of operator systems.
We view $\S$ as sitting inside $\cenv(\S)$ and let the \emph{centre of $\S$} be defined by
\[
\Z(\S) := \Z(\cenv(\S)) \cap \S = \{x \in \S \mid \iota_{\env}(x) y = y \iota_{\env}(x) \foral y \in \cenv(\S) \}.
\]
Since $\S$ generates $\cenv(\S)$ as a C*-algebra, we have that 
\[
\Z(\S) = \{x \in \S \mid \iota_{\env}(x) \iota_{\env}(y) = \iota_{\env}(y) \iota_{\env}(x) \foral y \in \S\}.
\]

%%%%%%%%%%%%%%%%%%%%%%%%%%%%%%%%
\begin{corollary}\label{C:centers}
Let $\S$ and $\T$ be operator systems.
If $\S \sim_{\Delta} \T$ then $\Z(\ol{\S}) \simeq_{\rm c.o.i.} \Z(\ol{\T})$.
Consequently an operator system $\S$ is $\Delta$-equivalent to a function system if and only if $\ol{\S} \sim_{\Delta} \Z(\ol{\S})$.
\end{corollary}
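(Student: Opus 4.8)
The plan is to mimic the proof of Theorem \ref{C:fun sys}, but replacing the use of commutativity of $\cenv(\S)$ by a restriction of the relevant $*$-isomorphism to the centres of the C*-envelopes. First I would invoke Proposition \ref{P:reduction} to obtain unital complete order embeddings $\phi \colon \S \to \B(H)$ and $\psi \colon \T \to \B(K)$ and a closed non-degenerate TRO $\M \subseteq \B(H,K)$ with $\phi(\S) \sim_{\rm TRO} \psi(\T)$ via $\M$, together with the identifications $\ca(\phi(\S)) \simeq \cenv(\S)$, $\ca(\psi(\T)) \simeq \cenv(\T)$, $[\M^*\M] \simeq \A_\S$, $[\M\M^*] \simeq \A_\T$, and the relations $[\M^* \ca(\psi(\T)) \M] = \ca(\phi(\S))$ and $[\M \ca(\phi(\S)) \M^*] = \ca(\psi(\T))$. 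Since $\ca(\phi(\S)) \sim_{\rm TRO} \ca(\psi(\T))$ via $\M$ and these are C*-algebras, this is a strong Morita equivalence of C*-algebras; by the Rieffel correspondence, strongly Morita equivalent C*-algebras have $*$-isomorphic centres. Concretely, as in the proof of Theorem \ref{C:fun sys}, pick row/column contractions $\un{m}_1 = (m_{1,n})_n \in R(\M)$ and $\un{m}_2 = (m_{2,n})_n \in C(\M)$ with $\sum_n m_{1,n} m_{1,n}^* = 1_{\psi(\T)}$ and $\sum_n m_{2,n}^* m_{2,n} = 1_{\phi(\S)}$, and define $\vartheta \colon \ca(\phi(\S)) \to \ca(\psi(\T))$ by $\vartheta(a) = \sum_n m_{1,n} a m_{1,n}^*$ and $\varphi \colon \ca(\psi(\T)) \to \ca(\phi(\S))$ by $\varphi(c) = \sum_n m_{2,n}^* c m_{2,n}$.

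The key point is that $\vartheta$ and $\varphi$ \emph{restrict} to mutually inverse $*$-isomorphisms between $\Z(\ca(\phi(\S)))$ and $\Z(\ca(\psi(\T)))$. Indeed, the verification that $\vartheta$ is multiplicative on central elements uses only that $m_{1,n}^* m_{1,k} \in \A_\S$ commutes with central $a$, and more generally for $a \in \Z(\ca(\phi(\S)))$ one checks that $\vartheta(a)$ commutes with $\ca(\psi(\T)) = [\M \ca(\phi(\S)) \M^*]$: for $m, m' \in \M$ and $b \in \ca(\phi(\S))$, compute $\vartheta(a)\, m b m'^* = \sum_n m_{1,n} a m_{1,n}^* m b m'^*$ and use that $m_{1,n}^* m \in [\M^* \M] \subseteq \A_\S$ commutes with $a$ and that $a$ is central in $\ca(\phi(\S))$ to slide $a$ across, then reassemble using $\sum_n m_{1,n} m_{1,n}^* = 1$; one gets $m b m'^* \vartheta(a)$. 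Thus $\vartheta(\Z(\ca(\phi(\S)))) \subseteq \Z(\ca(\psi(\T)))$ and symmetrically for $\varphi$; since $\varphi \circ \vartheta = \id$ and $\vartheta \circ \varphi = \id$ (exactly the computation in Theorem \ref{C:fun sys}), the restrictions are mutually inverse $*$-isomorphisms. It then remains to see that $\vartheta$ carries $\Z(\ol{\S}) = \Z(\cenv(\S)) \cap \phi(\S)$ onto $\Z(\ol{\T}) = \Z(\cenv(\T)) \cap \psi(\T)$: by the proof of Theorem \ref{C:fun sys}, $\vartheta$ restricts to a unital complete order isomorphism of $\phi(\S)$ onto $\psi(\T)$ (this was shown there without any use of commutativity), and it maps central elements to central elements by the previous step, so $\vartheta(\Z(\ol{\S})) \subseteq \Z(\ol{\T})$; applying the same to $\varphi$ gives the reverse inclusion, hence $\vartheta|_{\Z(\ol{\S})}$ is a unital complete order isomorphism onto $\Z(\ol{\T})$.

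For the final assertion: if $\ol{\S} \sim_{\Delta} \Z(\ol{\S})$, then since $\Z(\ol{\S})$ is an operator subsystem of the abelian C*-algebra $\Z(\cenv(\S))$, it is a function system, so $\S$ is $\Delta$-equivalent to a function system. Conversely, if $\S \sim_{\Delta} \F$ with $\F$ a function system, then by the first part $\Z(\ol{\S}) \simeq_{\rm c.o.i.} \Z(\ol{\F})$; but for a function system $\F$ one has $\cenv(\F) \simeq {\rm C}(\ol{\partial K})$ abelian, so $\Z(\ol{\F}) = \cenv(\F) \cap \ol{\F} = \ol{\F}$, whence $\Z(\ol{\S}) \simeq_{\rm c.o.i.} \ol{\F} \simeq_{\rm c.o.i.} $ a copy of $\ol{\F}$; combining with $\S \sim_{\Delta} \F$ and Proposition \ref{p_coarser} (complete order isomorphism is finer than $\Delta$-equivalence) and transitivity of $\sim_{\Delta}$ gives $\ol{\S} \sim_{\Delta} \Z(\ol{\S})$.

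I expect the main obstacle to be the bookkeeping in the middle step: verifying cleanly that $\vartheta$ maps the centre of $\cenv(\S)$ \emph{into} the centre of $\cenv(\T)$ (rather than merely being multiplicative on central elements), which requires carefully using the module relations $[\M^*\M] \subseteq \A_\S$, $[\M\M^*] \subseteq \A_\T$ together with the TRO-equivalence identities $[\M \ca(\phi(\S)) \M^*] = \ca(\psi(\T))$, and the fact that $\A_\S$ is in the centraliser of neither algebra in general — so one genuinely needs the centrality of $a$ within $\ca(\phi(\S))$, not just commutation with $\A_\S$. Everything else is a transcription of the arguments already given for Theorem \ref{C:fun sys}.
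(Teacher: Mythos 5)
Your overall route is the paper's: reduce via Proposition \ref{P:reduction}, form $\vartheta(s)=\sum_n m_n s m_n^*$, and show by the sliding computation (commuting the central element past elements of $[\M^*\M]\subseteq\A_\S$ and using its centrality in $\ca(\phi(\S))$) that $\vartheta$ maps the centre of $\ca(\phi(\S))$ into the centre of $\ca(\psi(\T))=[\M\,\ca(\phi(\S))\,\M^*]$ --- this is exactly the paper's key step. However, one intermediate assertion is wrong: you claim that, by the proof of Theorem \ref{C:fun sys}, $\vartheta$ restricts to a unital complete order isomorphism of $\phi(\S)$ onto $\psi(\T)$, and that this ``was shown there without any use of commutativity''. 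It was not: both the multiplicativity of $\vartheta$ and the identity $\varphi\circ\vartheta=\id$ in that proof rest on the fact that $m_{1,n}^*m_{1,k}$ and $m_{2,n}^*m_{1,k}$, which lie in $\A_\S$, commute with \emph{arbitrary} elements of $\ca(\phi(\S))$, and this holds only because the envelope there is abelian. If your claim were true in general, every $\Delta$-equivalence would produce a complete order isomorphism $\ol{\S}\simeq_{\rm c.o.i.}\ol{\T}$, contradicting, for instance, Corollary \ref{C:rigid}, where $\T\simeq M_k(\ol{\S})$ with $k>1$.

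Fortunately the erroneous claim is not needed. All your argument requires are the (true) inclusions $\vartheta(\phi(\S))\subseteq[\M\,\phi(\S)\,\M^*]\subseteq\psi(\T)$ and $\varphi(\psi(\T))\subseteq\phi(\S)$, together with the fact --- which you did establish correctly, with centrality standing in for commutativity --- that $\vartheta$ and $\varphi$ restrict to mutually inverse maps between $\Z(\ca(\phi(\S)))$ and $\Z(\ca(\psi(\T)))$. These yield $\vartheta(\Z(\ol{\S}))\subseteq\Z(\ol{\T})$ and $\varphi(\Z(\ol{\T}))\subseteq\Z(\ol{\S})$, and the restrictions are mutually inverse unital completely positive maps, hence complete order isomorphisms of the centres; this is precisely how the paper concludes. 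Your treatment of the ``consequently'' statement is fine, modulo the small remark that $\Z(\ol{\S})$, being a unital selfadjoint subspace of the abelian C*-algebra $\Z(\cenv(\S))$, is identified with a function system only after restricting to the character space of the C*-algebra it generates, so that points are separated.
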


\begin{proof}
By Proposition \ref{P:reduction}, without loss of generality we assume that $\S$ and $\T$ are TRO-equivalent complete operator systems such that $\ca(\S) = \cenv(\S)$ and $\ca(\T) = \cenv(\T)$ are TRO-equivalent via the same closed TRO, say $\M$.
As in the proof of Theorem \ref{C:fun sys}, let $\un{m} = (m_n)_n \in C(\M)$ be a row contraction such that
$\sum_n m_n m_n^* = 1_{\cl T}$
and define the unital completely contractive map
\[
\vartheta \colon \S \to \T; s \mapsto \un{m} \cdot (s \otimes I_{\ell^2}) \cdot \un{m}^* = \sum_n m_{n} s m_{n}^*.
\]
It suffices to show that $\vartheta|_{\Z(\S)}$ has range inside $\Z(\T)$; the result then follows in the same way as in Theorem \ref{C:fun sys}.
Towards this end, let $l_1, l_2 \in \M$ and $a \in \ca(\S)$.
Then $m_n^* l_1 \in \S$ and for $s \in \Z(\S)$ we have
\begin{align*}
\vartheta(s) \cdot l_1 a l_2^* 
& =
\sum_n m_n s (m_n^* l_1) a l_2^*
=
\sum_n m_n m_n^* l_1 s a l_2^* \\
& =
l_1 a s l_2^*
=
\sum_n l_1 a s (l_2^* m_n) m_n^*
=
l_1 a l_2^* \sum_n m_n s m_n^*
=
l_1 a l_2^* \cdot \vartheta(s).
\end{align*}
Since $[\M \ca(\S) \M^*] = \ca(\T)$, we have that $\vartheta(s) \in \Z(\ca(\T))$ and thus $\vartheta(s)$ is in $\Z(\T)$.
\end{proof}

%%%%%%%%%%%%%%%%%%%%%%%%%%%%%%%%
\section{Non-commutative graphs}\label{s_NCgraphs}
%%%%%%%%%%%%%%%%%%%%%%%%%%%%%%%%

Recall that a \emph{non-commutative (NC) graph} \cite{DSW13} is an operator subsystem of $M_d$, for some $d\in \bb{N}$. 
The prototypical example comes from undirected graphs $G$ on a vertex set $[d] := \{1,\dots,d\}$. 
In this case we write $i\sim j$ if $\{i,j\}$ is an edge of $G$, and $i\simeq j$ if $i\sim j$ or $i = j$. 
Following \cite{DSW13}, let 
\begin{equation}\label{eq_SG}
\cl S_G = {\rm span}\{E_{i,j} \mid i\simeq j\}\subseteq M_d,
\end{equation}
for the canonical matrix unit system $(E_{i,j})_{i,j=1}^d$ of $M_d$. 
Since $G$ is undirected, $\cl S_G$ is an operator system; operator systems of this form are called \emph{graph operator systems}.

The C*-algebra generated by $\S_G$ contains all $E_{v_1, v_2}$ as long as there exists a path connecting the vertex $v_1$ with the vertex $v_2$.
Therefore $\ca(\S_G) = \oplus_{j = 1}^n M_{d_j}$, for some $d_j, n\in \bb{N}$ with $d_1 + \cdots + d_n = d$.
By \cite[Theorem 3.2]{op} in particular we have that $\cenv(\S_G) \simeq \oplus_{j = 1}^n M_{d_j}$, where the different components for $\ca(\S_G)$ correspond to the disjoint connected components of the graph $G$.
Therefore $\A_{\S_G} \simeq \oplus_{j = 1}^k M_{r_j}$, for some $r_j, k \in \bb{N}$ with $n \leq k$ and $r_1 + \cdots + r_k = d$.

We can extend these results to non-commutative graphs, up to irreducible representations.
We say that a non-commutative graph $\S \subseteq M_d$ \emph{acts reducibly} if there exists a non-trivial subspace $L \subseteq \bC^d$ such that
\begin{enumerate}
\item $L$ is reducing for $\S$;
\item the restriction map of $\S$ to $L$ is completely isometric.
\end{enumerate}
We say that $\S \subseteq M_d$ \emph{acts irreducibly} if there exists no such non-trivial subspace $L \subseteq \bC^d$; in this case we see that $\ca(\S) = \oplus_{j=1}^n M_{d_j}$ for $d_j, n \in \bN$ with $d_1 + \cdots + d_n = d$.

This irreducibility condition is equivalent to the restriction maps on the components $\bC^{d_j}$ having the strong separation property of Arveson \cite{Arv10}.
It is easily verified that the restriction maps satisfy also the faithfulness and the irreducibility condition of \cite{Arv10}, and thus $\cenv(\S) \simeq \ca(\S)$ by an application \cite[Theorem 1.2]{Arv10}.
Arveson's proof uses the Choquet boundary machinery.
Let us provide an elementary proof here.

%%%%%%%%%%%%%%%%%%%%%%%%%%%%%%%%
\begin{proposition}\label{p_ncenv}
Let $\S \subseteq M_d$ be an non-commutative graph.
If $\S$ acts irreducibly then 
\[
\cenv(\S) \simeq \ca(\S) = \oplus_{j = 1}^n M_{d_j}, \ d_j, n \in \bN, d_1 + \cdots + d_n = d,
\]
for some $d_j, n \in \bN$.
Moreover, the identity representation on $\S$ is a Choquet representation.
%Consequently,
%\[
%\A_{\S} = \oplus_{j = 1}^k M_{r_j}, \ r_j, k \in \bN, n \leq k, r_1 + \cdots + r_k = d.
%\]
\end{proposition}

\begin{proof}
Suppose that $\ca(\S) = \oplus_{j=1}^n M_{d_j}$ and let $\Phi \colon \ca(\S) \to \cenv(\S)$ be the canonical $*$-epimorphism.
Up to a re-arranging assume that
\[
\ker \Phi = \oplus_{j=1}^{n'} M_{d_j}, \quad n' < n.
\]
Let $L := \sum_{j=n' + 1}^{n} \bC_{d_j}$ and for every $s \in \cl S$ we write $s = s|_{L} \oplus s|_{L^\perp}$, so that $s|_{L^\perp} \in \ker \Phi$ for all $s \in \S$.
Since $\ker\Phi$ is the \v{S}ilov ideal, and thus boundary, we have that
\[
\| s \| = \| s + \ker \Phi \| = \inf\{ s|_{L} + s|_{L^\perp} + x \mid x \in \ker\Phi \} = \| s|_{L} \|
\]
for all $s \in \S$.
Similar arguments work for the matrix levels giving that the restriction map on $L$ is completely isometric.
By definition $L$ is reducing for $\S$ which leads to a contradiction, unless $n' = 0$.
Therefore $\Phi$ is a $*$-isomorphism.

The second part follows from Arveson's Boundary Theorem because $\ca(\S)$ is a finite dimensional C*-algebra.
\end{proof}

Let $H$ and $K$ be finite dimensional Hilbert spaces and $\cl S\subseteq \cl B(H)$ and $\cl T\subseteq \cl B(K)$ be non-commutative graphs. 
A \emph{cohomomorphism} from $\cl T$ to $\cl S$ is a unital completely positive map $\Phi \colon \cl B(K)\to \cl B(H)$, which admits a Kraus representation 
\begin{equation}\label{eq_ngh}
\Phi(T) = \sum_{i=1}^r A_i^* T A_i
\quad \text{such that} \quad
A_i^* \cl T A_j\subseteq \cl S \foral i,j\in [r];
\end{equation}
if such a map exists, we write $\cl T\to \cl S$. 
A \emph{bicohomomorphism between $\S$ and $\T$} consists of two cohomomorphisms $\cl T \to \cl S$ and $\cl S \to \cl T$ with Kraus representations induced by $\{A_i\}_{i \in [r]}$ and $\{B_j^*\}_{j \in [r']}$, respectively, such that
\[
\spn\{A_i \mid i \in [r]\} = \spn\{B_j \mid j \in [r']\}.
\]

The notion of cohomomorphism is equivalent to the notion of a non-commutative graph homomorphism, introduced by D. Stahlke in \cite{Sta16}.
Stahlke works with the orthogonal complements of operator systems as opposed to non-commutative graphs; in view of our emphasis on operator systems, it is natural to start with the concept defined here. 
By \cite[Theorem 8]{Sta16}, if $G$ and $H$ are classical graphs, the existence of a classical graph homomorphism from $G$ to $H$ is equivalent to the existence of a cohomomorphism from $\cl S_{\bar{H}}$ to $\cl S_{\bar{G}}$ (we denote by $\bar{G}$ the complement of a graph $G$).

%%%%%%%%%%%%%%%%%%%%%%%%%%%%%%%%
\begin{proposition}\label{p_coho}
Let $\S$ and $\T$ be non-commutative graphs. 
There is a bicohomomorphism between $\S$ and $\T$ if and only if $\S$ and $\T$ are bihomomorphically equivalent, if and only if $\S$ and $\T$ are TRO-equivalent.
\end{proposition}

\begin{proof}
Assume that $\S \subseteq M_m$ and $\T \subseteq M_k$. 
If $\T \to \S$ and $\Phi \colon M_k \to M_m$ is a unital completely positive map with Kraus representation satisfying (\ref{eq_ngh}), then the subspace 
\[
\X := {\rm span}\{A_i \mid i \in [r]\}
\]
satisfies the desired conditions. 
Conversely, suppose that $\X \subseteq M_{k,m}$ is a subspace such that $I \in [\X^*\X]$ and $\X^* \T \X\subseteq \S$. 
By finite dimensionality, there exist operators $A_i, C_i \in \X$, $i = 1,\dots, p$, such that $I_m = \sum_{i=1}^p A_i^* C_i$. 
Let $A = (A_i)_{i=1}^p$ and $C = (C_i)_{i=1}^p$, viewed as column operators. 
Then $I_m = A^* C$, implying that ${\rm rank} (C) = m$. 
It follows that ${\rm rank} (C^* C) = m$, and hence $C^* C$ is invertible. 
By the remarks on \cite[p. 558]{Sta16} and using that $\S$ is a bimodule over $\ca(\X^*\X)$ as in Proposition \ref{P:tro os}, this yields $\T \to \S$.
By dual arguments we have that $\S \to \T$.

The second equivalence is proven in the same way as in the proof of Proposition \ref{P:tro os}.
\end{proof}

%%%%%%%%%%%%%%%%%%%%%%%%%%%%%%%%
\begin{remark}
From the proof of Proposition \ref{p_coho}, we have that two non-commutative graphs are TRO-equivalent if and only if there are finitely many $A_i, B_j$ in a non-degenerate operator space $\X$ such that
\[
\spn\{A_i \mid i \in [r]\} = \X = \spn\{B_j \mid j \in [r']\},
\]
with
\[
A_{i_1}^* \T A_{i_2} \subseteq \S, \text{ for } i_1, i_2 \in [r]
\qand
B_{j_1} \S B_{j_2}^* \subseteq \T \text{ for } j_1, j_2 \in [r'].
\]
Indeed, these inclusions are the only requirement for a Kraus representation.
\end{remark}

We will show that $\Delta$-equivalence of non-commutative graphs can always be realised as a finite-dimensional TRO-equivalence.
We isolate the following idea from \cite{KP05, KT03}.

%%%%%%%%%%%%%%%%%%%%%%%%%%%%%%%%
\begin{lemma}\label{l_decomp}
Suppose that $\M$ is a TRO such that
\[
[\M^* \M] = \oplus_{j=1}^k M_{r_j}
\qand
[\M \M^*] = \oplus_{i=1}^m M_{\ell_i},
\]
and set $r := \sum_{j=1}^k r_j$ and $\ell := \sum_{i=1}^m \ell_i$.
Then there exist $N \in \bN$ and surjective maps $g \colon [r] \to [N]$ and $f \colon [\ell] \to [N]$ such that 
\[
\M  = \{ (a_{i,j}) \mid a_{i,j} = 0 \text{ if } f(i) \neq g(j) \} \subseteq M_{\ell, r}.
\]
\end{lemma}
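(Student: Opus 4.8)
The plan is to decompose $\M$ along the minimal central projections of its two corner C*-algebras and to show that the resulting array of subspaces is block-diagonal. Put $\B := [\M^*\M]$ and $\A := [\M\M^*]$; these are finite-dimensional, hence so is $\M$ (a full imprimitivity bimodule over finite-dimensional C*-algebras), and $1_\A$ and $1_\B$ act as identities on $\M$ since $[\M\M^*\M] = \M$. Let $q_1, \dots, q_k$ be the minimal central projections of $\B$, so that $q_j\B q_j \simeq M_{r_j}$ and $\sum_j q_j = 1_\B$, and likewise let $p_1, \dots, p_m$ be those of $\A$, with $p_i\A p_i \simeq M_{\ell_i}$ and $\sum_i p_i = 1_\A$. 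This produces an orthogonal decomposition $\M = \bigoplus_{i,j}\N_{ij}$ with $\N_{ij} := p_i\M q_j$, and, using that the $p_i$ and $q_j$ are central, each $\N_{ij}$ is a TRO with $[\N_{ij}^*\N_{ij}] \subseteq q_j\B q_j$ and $[\N_{ij}\N_{ij}^*] \subseteq p_i\A p_i$.

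The core of the argument, which I expect to be the main obstacle, is to show that for each $j$ there is exactly one $i$ with $\N_{ij} \neq 0$. If $\N_{ij} \neq 0$, then $[\N_{ij}^*\N_{ij}]$ and $[\N_{ij}\N_{ij}^*]$ are nonzero ideals of the simple algebras $q_j\B q_j$ and $p_i\A p_i$, hence equal them, so $\N_{ij}$ is a full $M_{\ell_i}$-$M_{r_j}$-imprimitivity bimodule. Because the $p_i$ are orthogonal, $[\M q_j\M^*] = \bigoplus_{i\,:\,\N_{ij}\neq0} p_i\A p_i$. On the other hand $\M q_j$ is itself a TRO with $[(\M q_j)^*(\M q_j)] = q_j\B q_j \simeq M_{r_j}$; as a right Hilbert module over the matrix algebra $M_{r_j}$ it is free, say $\M q_j \simeq (\bC^{r_j})^{\oplus D_j}$, and therefore its algebra of adjointable operators $[\M q_j\M^*]$ is $\simeq M_{D_j}$, which is simple. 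A finite direct sum of full matrix algebras is simple only if it has a single summand, so at most one $i$ has $\N_{ij} \neq 0$; and at least one does, since $\M q_j \neq 0$ (otherwise $q_j = q_j^*q_j \in [q_j\M^*\M q_j] = \{0\}$). Denote this index by $\sigma(j)$. Running the symmetric argument over the $p_i$ yields a map $\tau$ with $\N_{ij} \neq 0 \iff j = \tau(i)$, so $\sigma$ and $\tau$ are mutually inverse bijections; hence $k = m =: N$, and after relabeling we may assume $\sigma = \tau = \id$, i.e. $p_i\M q_j = 0$ whenever $i \neq j$.

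Next I would identify the diagonal blocks. Each $\N_j := p_j\M q_j$ is a full $M_{\ell_j}$-$M_{r_j}$-imprimitivity bimodule which, by the freeness computation above, is free of rank $\ell_j$ as a right Hilbert $M_{r_j}$-module; any such bimodule is ternary isomorphic to the rectangular matrix space $M_{\ell_j,r_j}$ with its standard left and right actions. (Inducing the identity representation of $M_{r_j}$ on $\bC^{r_j}$ as in Remark \ref{R:ind rep} realizes $\N_j$ as a nonzero subspace of $M_{\ell_j,r_j}$ that is invariant under left multiplication by $M_{\ell_j}$ and right multiplication by $M_{r_j}$, and a short argument — move a nonzero column to every position using matrix units, then fill each column using $M_{\ell_j}$ — shows that such a subspace must be all of $M_{\ell_j,r_j}$; compare \cite{KP05, KT03}.) Thus, up to a ternary isomorphism, $\M = \bigoplus_{j=1}^N M_{\ell_j,r_j}$, realized on $\bC^r = \bigoplus_{j=1}^N\bC^{r_j}$ and $\bC^\ell = \bigoplus_{j=1}^N\bC^{\ell_j}$.

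Finally I would read off the statement: define $g\colon[r]\to[N]$ and $f\colon[\ell]\to[N]$ by sending a coordinate to the index of the block of $\bC^r$, respectively $\bC^\ell$, that contains it. Both maps are surjective because every $r_j \geq 1$ and every $\ell_j \geq 1$. An entry $a_{i,j}$ of a matrix in $M_{\ell,r}$ can be nonzero for a matrix in $\M$ exactly when row $i$ and column $j$ lie in matching blocks, that is, when $f(i) = g(j)$ — which is precisely the asserted description of $\M$.
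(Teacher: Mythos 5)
Your proof is correct, and its skeleton is the paper's: decompose $\M$ into the corners $\N_{ij}=p_i\M q_j$ along the minimal central projections of $[\M\M^*]$ and $[\M^*\M]$, show that the nonzero corners form the graph of a bijection between the two families of central summands, and identify each surviving corner with the full rectangular block $M_{\ell_j,r_j}$ (this last step is exactly the ``direct verification'' the paper delegates to \cite{KP05, KT03}, and your invariant-subspace argument is the standard way to do it; note that in the concrete setting of the lemma the corner $p_i\M q_j$ already sits inside $M_{\ell_i,r_j}$, so the detour through an induced representation is not even needed). Where you genuinely diverge is the pivotal counting step. The paper groups the corners into components, writes $\M=\oplus_v M_{\ell^v,r^v}$, and compares $(\oplus_v M_{\ell^v,r^v})^*(\oplus_v M_{\ell^v,r^v})=\oplus_v M_{r^v}$ with the hypothesis $[\M^*\M]=\oplus_j M_{r_j}$: a grouped block would create off-diagonal blocks $M_{r_j,r_{j'}}$, so each group is a singleton. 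You instead argue that $[\M q_j\M^*]\cong\K(\M q_j)$ is simple, because every Hilbert C*-module over $M_{r_j}$ is a multiple of the row module $M_{1,r_j}$, and a simple algebra cannot coincide with $\oplus_{i:\,\N_{ij}\neq 0}p_i[\M\M^*]p_i$ unless there is a single summand. (Your word \emph{free} is a slight misnomer --- the module is $\cong M_{D_j,r_j}$, a multiple of the irreducible module rather than of $M_{r_j}$ itself --- but the consequence you use, $[\M q_j\M^*]\cong M_{D_j}$, is correct.) So your route is more structural and fully self-contained, invoking Morita/Hilbert-module theory where the paper's block comparison is elementary matrix algebra; both yield the same matching, the same block-diagonal description, and the same reading of the surjections $f$ and $g$.
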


\begin{proof}
Since $\M$ is a left module over $[\M \M^*]$ and a right module over $[\M^* \M]$, there exist partitions 
\begin{equation}\label{eq_km}
[k] = \cup_{v=1}^N \alpha_v \ \mbox{ and } \ [m] = \cup_{v=1}^{N} \beta_v
\end{equation}
such that, if $[r^v] = \cup_{i\in \alpha_v} [r_i]$ and $[\ell^v] = \cup_{j\in \beta_v} [\ell_j]$, then 
\[
\cl M = \oplus_{v=1}^N M_{\ell^v, r^v}.
\]
In addition $\M$ is an equivalence bimodule and thus
\[
\oplus_{v=1}^N M_{r^v} = (\oplus_{v=1}^N M_{\ell^v, r^v})^* \cdot (\oplus_{v=1}^N M_{\ell^v, r^v}) = [\M^* \M] = \oplus_{i=1}^k M_{r_i}.
\]
From this we derive that $|\al_v| = 1$, that is, the decompositions (\ref{eq_km}) are trivial, and so $N = k$.
Likewise $N = m$ and therefore we have $k = N = m$.
Let 
\[
g \colon [r] \to [N] \qand f \colon [l]\to [N]
\]
be the natural surjections arising from the decompositions $[r] = \cup_{i=1}^N [r_i]$ and $[\ell] = \cup_{j=1}^N [\ell_j]$.
Then a direct verification (see e.g. \cite{KP05, KT03}) shows that
\[
\cl M = \left\{\left(a_{i,j}\right) \mid a_{i,j} = 0 \text{ if } f(i) \neq g(j) \right\},
\]
and the proof is complete.
\end{proof}

%%%%%%%%%%%%%%%%%%%%%%%%%%%%%%%%
\begin{theorem}\label{t_ncg}
Let $\S \subseteq M_{d}$ and $\T \subseteq M_{d'}$ be irreducibly acting non-commutative graphs.
Then $\S \sim_\Delta \T$ if and only if $\S \sim_{\rm TRO} \T$.
\end{theorem}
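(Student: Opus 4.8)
The forward implication is immediate: if $\S \sim_{\rm TRO} \T$, then taking $\phi$ and $\psi$ to be the identity embeddings in Definition \ref{D:delta} shows $\S \sim_\Delta \T$. For the converse, the plan is to run Proposition \ref{P:reduction} and then use irreducibility to show that the representations it produces are, up to a unitary, the given ones. So suppose $\S \sim_\Delta \T$. By Proposition \ref{p_ncenv}, irreducibility gives $\cenv(\S) \simeq \oplus_{j=1}^n M_{d_j}$ with $\sum_j d_j = d$ and $\A_\S \simeq \oplus_{v=1}^k M_{r_v}$ with $\sum_v r_v = d$, together with analogous decompositions $\cenv(\T) \simeq \oplus_{i} M_{d_i'}$ ($\sum_i d_i' = d'$) and $\A_\T \simeq \oplus_u M_{\ell_u}$ ($\sum_u \ell_u = d'$); in particular $\ca(\S) = \cenv(\S)$ and $\ca(\T) = \cenv(\T)$ in their given representations. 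Since $\cenv(\S)$ is a finite-dimensional C*-algebra, it is injective as an operator system, and as it is generated by $\S$ inside $\I(\S)$, rigidity of the injective envelope forces $\I(\S) = \cenv(\S)$. Hence the embedding $\phi \colon \S \to \I(\S)$ furnished by Proposition \ref{P:reduction} realises $\S$ inside $\cenv(\S) \subseteq M_d$, that is, on $\bC^d$; Proposition \ref{P:reduction} also provides a unital complete order embedding $\psi \colon \T \to \B(K)$ with $\ca(\psi(\T)) \simeq \cenv(\T)$, and a non-degenerate closed TRO $\M \subseteq \B(\bC^d, K)$ with $\phi(\S) \sim_{\rm TRO} \psi(\T)$ via $\M$, $[\M^* \M] = \phi(\A_\S)$ and $[\M \M^*] = \psi(\A_\T)$.

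Here $[\M^* \M] = \phi(\A_\S)$ is a unital $*$-subalgebra of $M_d$ isomorphic to $\oplus_v M_{r_v}$ with $\sum_v r_v = d$, so it acts on $\bC^d$ faithfully and without multiplicity; consequently $\M$, being a right Hilbert module over a finite-dimensional C*-algebra, is finite-dimensional, and $K = [\M\,\bC^d]$ is finite-dimensional too. Since $\M$ is a $[\M\M^*]$-$[\M^*\M]$-imprimitivity bimodule, $K$ is unitarily equivalent to the Rieffel induced space $\M \otimes_{[\M^* \M]} \bC^d$; feeding the normal form for $\M$ from Lemma \ref{l_decomp} into this tensor product, one computes $\dim K = \sum_u \ell_u = d'$. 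Thus $\psi$ realises $\T$ on $\bC^{d'}$, with $\ca(\psi(\T))$ acting faithfully and (by the dimension count) without multiplicity. Both $\phi(\S) \subseteq M_d$ and the given $\S \subseteq M_d$ are unital complete order embeddings whose generated C*-algebras are copies of $\cenv(\S)$ acting on $\bC^d$ without multiplicity; the universal property of the C*-envelope (applicable since, by Proposition \ref{p_ncenv}, there is no \v{S}ilov ideal in either $\ca(\S)$ or $\ca(\phi(\S))$) yields a $*$-isomorphism between these copies fixing $\S$, which is spatial because both act without multiplicity on $\bC^d$. Hence there is a unitary $U \in M_d$ with $U \phi(s) U^* = s$ for all $s \in \S$, and likewise a unitary $V \in M_{d'}$ with $V \psi(t) V^* = t$ for all $t \in \T$. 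Then $V \M U^* \subseteq M_{d',d}$ is a TRO witnessing $\S \sim_{\rm TRO} \T$.

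The step I expect to require the most care is the identification $\dim K = d'$ --- equivalently, the assertion that the representation $\psi$ produced by Proposition \ref{P:reduction} is already minimal. This is precisely where the irreducibility of $\T$ enters, since it forces the ``small'' linking algebra $[\M\M^*] \simeq \A_\T$ to act multiplicity-freely on the corresponding standard space, and it is carried out via the explicit description of finite-dimensional TROs in Lemma \ref{l_decomp}; the complementary point, the spatial implementation of the C*-envelope isomorphisms, is routine in finite dimensions.
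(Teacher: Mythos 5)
Your proposal is correct and follows essentially the same route as the paper's proof: apply Proposition \ref{P:reduction} together with Proposition \ref{p_ncenv}, identify the induced Hilbert space with $\bC^{d'}$ via Lemma \ref{l_decomp}, and then implement the resulting identifications spatially by unitaries, using the multiplicity-freeness forced by irreducibility. (One small caveat: the parenthetical justification that a right Hilbert module over a finite-dimensional C*-algebra is finite-dimensional is not literally true --- e.g.\ $\ell^2$ over $\bC$ --- but the needed finite-dimensionality of $\M$ follows from the unitality and finite-dimensionality of $[\M\M^*]$, or directly from Lemma \ref{l_decomp}, which you invoke anyway.)
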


\begin{proof}
It suffices to show that if $\S \sim_{\Delta} \T$ then we can visualise it through a TRO-equivalence on the embeddings $\S \subseteq M_{d}$ and $\T \subseteq M_{d'}$.
Using Proposition \ref{p_ncenv} let 
\[
\cenv(\S) \simeq \ca(\S) = \oplus_{j=1}^n M_{d_j}
\]
with $d_1 + \cdots + d_n = d$.
Likewise write
\[
\cenv(\T) \simeq \ca(\T) = \oplus_{i=1}^{n'} M_{d_i'}
\]
with $d_1' + \cdots + d_{n'}' = d'$.
By Proposition \ref{P:reduction} we have that for the embedding $\S \subseteq \ca(\S) \subseteq M_{d}$ 
there exists a (closed) TRO $\M$ such that
\[
[\M^* \M] = \A_\S
\qand
[\M \M^*] \simeq \A_\T,
\]
and moreover for the canonical representation
\[
\psi \colon \T \to \B(\M \otimes_{[\M^*\M]} \bC^d)
\]
we get that $\S \sim_{\rm TRO} \psi(\T)$ by $\M$, so that
\[
[\M^* \psi(\T) \M] = \S
\qand
[\M \S \M^*] = \psi(\T).
\]
In addition $\ca(\S) \sim_{\rm TRO} \ca(\psi(\T))$ and $\ca(\psi(\T)) \simeq \cenv(\T)$.

Since $\ca(\psi(\T))$ is an orthogonal sum of matrix C*-algebras, its representation on $\M \otimes_{[\M^*\M]} \bC^d$ is unitarily equivalent to the direct sum of identity representations on each summand with some multiplicity.
We will show that the multiplicities are equal to one.
To reach a contradiction, suppose that the multiplicity on a summand $M_{d_i'}$ is greater than one.
Then there is a subspace 
\[
K \subseteq \M \otimes_{[\M^*\M]} \bC^d
\]
that is reducing for $\ca(\psi(\T))$ and the restriction on $K^\perp$ is injective on $\ca(\psi(\T))$.
Hence the restriction on $K^\perp$ is completely isometric on $\ca(\psi(\T))$.
Recall that applying the functor $\M^* \otimes_{[\M\M^*]} -$ on $(\M \otimes_{[\M^*\M]} \bC^d, \ca(\psi(\T)))$ produces a $*$-representation on $\ca(\S)$ that is unitarily equivalent to the identity representation.
Hence 
\[
L:= \M^* \otimes_{[\M \M^*]} K
\]
induces a subspace in $\bC^d$, through the unitary equivalence, that is reducing for $\ca(\S)$ (and so for $\S$) and $\id|_L$ is completely isometric on $\ca(\S)$ (and so for $\S$).
This contradicts the assumption that $\S$ acts irreducibly.

Therefore the representation of $\cenv(\T) \simeq \ca(\psi^*(\T))$ on $\M \otimes_{[\M^*\M]} \bC^{d}$ is unitarily equivalent to the identity representation.
Hence there is a unitary
\[
U \colon \M \otimes_{[\M^* \M]} \bC^d \to \bC^{d'}
\]
such that $\ad_U \circ \psi(t) = t$ for all $t \in \T$.
We conclude that $\S \sim_{\rm TRO} \T$ by $U \M$.
Indeed, on one hand we have
\[
[U \M S \M^* U^*] = [U \psi(\T) U^*] = \T
\]
and on the other hand we have
\[
[\M^* U^* \T U \M] = [\M^* \psi(\T) \M] = \S,
\]
while 
\[
[U \M \M^* U^*U \M ] = [U \M \M^* \M] = U \M,  
\]
i.e., $U\M$ is a TRO, and the proof is complete.
 \end{proof}

Let $G$ and $H$ be graphs with finite vertex sets $[k]$ and $[m]$, respectively.
We say that $G$ is a \emph{pullback of $H$}, if there exists a surjective map $f \colon [k] \to [m]$ such that 
\[
x\simeq x' \textup{ in $G$} \qiff f(x)\simeq f(x') \textup{ in $H$.}
\] 

%%%%%%%%%%%%%%%%%%%%%%%%%%%%%%%%
\begin{corollary}\label{c_graphs}
Let $G$ and $H$ be graphs.
The following are equivalent:
\begin{enumerate}
\item $\cl S_G \sim_{\rm TRO} \cl S_H$;
\item $\S_G \sim_{\rm \Delta} \S_H$;
\item $G$ and $H$ are pullbacks of isomorphic graphs. 
\end{enumerate}
\end{corollary}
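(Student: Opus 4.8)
The plan is to deduce the Corollary from Theorem~\ref{t_ncg} and then analyse, combinatorially, the ternary rings that implement TRO-equivalences of graph operator systems, using the normal form of Lemma~\ref{l_decomp}. First I would dispose of the equivalence \textup{(i)}$\Leftrightarrow$\textup{(ii)}: it suffices to note that $\cl S_G\subseteq M_{|V(G)|}$ and $\cl S_H\subseteq M_{|V(H)|}$ act irreducibly in their canonical matrix-unit representations. Indeed, a reducing subspace $L$ for $\cl S_G$ is reducing for $\ca(\cl S_G)=\bigoplus_j M_{d_j}$, hence a sub-sum of the connected-component blocks; since the norm on $\cl S_G$ (and its amplifications) is the maximum of the norms over the components, the restriction to $L$ is completely isometric only when $L$ is the whole space. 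With irreducibility in hand, Theorem~\ref{t_ncg} gives \textup{(i)}$\Leftrightarrow$\textup{(ii)}.

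Next I would record the structure of $\cl A_{\cl S_G}$: a direct matrix-unit computation shows that $E_{i,j}\in\cl A_{\cl S_G}$ precisely when $N_G[i]=N_G[j]$, and that these elements span $\cl A_{\cl S_G}$; writing $i\approx_G i'$ for this ``true-twin'' relation, one gets $\cl A_{\cl S_G}=\bigoplus_{C}M_{|C|}$ over the $\approx_G$-classes $C$, acting block-diagonally on $\bC^{|V(G)|}=\bigoplus_C\bC^{C}$, so that the blocks of $\cl A_{\cl S_G}$ correspond exactly to the vertices of the twin-quotient graph $G/{\approx_G}$ (with $[i]\simeq[i']$ iff $i\simeq_G i'$). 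For \textup{(iii)}$\Rightarrow$\textup{(i)} I would work with the (standard) convention that pullback maps are surjective; since $\cl S_{K_1}\simeq_{\rm c.o.i.}\cl S_{K_2}$, hence $\cl S_{K_1}\sim_{\rm TRO}\cl S_{K_2}$, whenever $K_1\cong K_2$, hypothesis \textup{(iii)} yields a single graph $K$ with surjective pullback maps $f\colon V(G)\to V(K)$ and $g\colon V(H)\to V(K)$. Put $\cl M_G:=\operatorname{span}\{E_{f(i),i}:i\in V(G)\}\subseteq M_{|V(K)|,|V(G)|}$. One checks that $\cl M_G$ is a non-degenerate TRO (surjectivity of $f$ gives $I\in[\cl M_G^*\cl M_G]$ and $I\in[\cl M_G\cl M_G^*]$) and, directly from $i\simeq_G i'\iff f(i)\simeq_K f(i')$, that $\cl M_G^*\cl S_K\cl M_G=\cl S_G$ and $\cl M_G\cl S_G\cl M_G^*=\cl S_K$; thus $\cl S_G\sim_{\rm TRO}\cl S_K$, and likewise $\cl S_H\sim_{\rm TRO}\cl S_K$. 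Transitivity of TRO-equivalence (Proposition~\ref{P:transitive}) then gives \textup{(i)}.

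For \textup{(i)}$\Rightarrow$\textup{(iii)}: by the first step and the proof of Theorem~\ref{t_ncg} (together with Proposition~\ref{P:reduction}), there is a closed TRO $\cl M\subseteq M_{|V(H)|,|V(G)|}$, in the canonical representations, with $\cl M^*\cl S_H\cl M=\cl S_G$, $\cl M\cl S_G\cl M^*=\cl S_H$, and $[\cl M^*\cl M]=\cl A_{\cl S_G}$, $[\cl M\cl M^*]=\cl A_{\cl S_H}$ as concrete algebras. Applying Lemma~\ref{l_decomp}, whose block decompositions here are exactly the twin-class decompositions of $\cl A_{\cl S_G}$ and $\cl A_{\cl S_H}$, produces $N\in\bN$ and surjections $g\colon V(G)\to[N]$, $f\colon V(H)\to[N]$ with $\cl M=\{(a_{i,j}):a_{i,j}=0\text{ if }f(i)\neq g(j)\}$; moreover $g$ and $f$ are the $\approx_G$- and $\approx_H$-class maps once $[N]$ is identified with the respective class sets, so each $f^{-1}(u)$ is an $\approx_H$-class. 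Computing $\cl M^*\cl S_H\cl M$ in matrix units, and using that within an $\approx_H$-class all vertices are mutually adjacent-or-equal while the adjacency of two distinct classes is well-defined, one obtains $\cl M^*\cl S_H\cl M=\operatorname{span}\{E_{j,j'}:f^{-1}(g(j))\simeq f^{-1}(g(j'))\text{ in }H/{\approx_H}\}$. Equating this with $\cl S_G=\operatorname{span}\{E_{j,j'}:j\simeq_G j'\}$ and using surjectivity of $g$, I get a graph $K$ on $[N]$ isomorphic to $H/{\approx_H}$ satisfying $j\simeq_G j'\iff g(j)\simeq_K g(j')$; that is, $G$ is a pullback of $K$ via $g$ and $H$ is a pullback of $K\cong H/{\approx_H}$ via $f$. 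Hence $G$ and $H$ are pullbacks of isomorphic graphs (in fact $K\cong G/{\approx_G}$, so $G/{\approx_G}\cong H/{\approx_H}$), which is \textup{(iii)}.

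The step I expect to be the main obstacle is the bookkeeping in \textup{(i)}$\Rightarrow$\textup{(iii)}: one must ensure that the abstract ternary ring furnished by Proposition~\ref{P:reduction} has its two ``legs'' indexed precisely by the true-twin classes of $G$ and of $H$ — this is exactly where the identification of $\cl A_{\cl S_G}$ from the second step is used — so that the purely formal matrix-unit computation of $\cl M^*\cl S_H\cl M$ genuinely reproduces the \emph{adjacency relation} of the twin-quotient graph, and not merely its vertex count. A smaller expository point is fixing the surjectivity convention for pullbacks, without which \textup{(i)}--\textup{(iii)} are not equivalent (e.g.\ two isolated vertices versus an edge are mutually ``pullbacks of $P_3$'' in the non-surjective reading, yet are not $\Delta$-equivalent, their $\cl A$'s having different numbers of blocks by Lemma~\ref{l_decomp}).
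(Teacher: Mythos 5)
Your proposal is correct, and its skeleton is the same as the paper's: the equivalence (i)$\Leftrightarrow$(ii) via Theorem \ref{t_ncg} (the paper leaves the irreducibility of the canonical representations of $\cl S_G$, $\cl S_H$ implicit; your block-norm argument is the right justification), the normal form of Lemma \ref{l_decomp} applied to the TRO supplied by Proposition \ref{P:reduction} and Theorem \ref{t_ncg} for (i)$\Rightarrow$(iii), and the ``pattern'' TRO built from the two pullback maps for (iii)$\Rightarrow$(i) (the paper writes this TRO directly between $\cl S_G$ and $\cl S_H$ rather than passing through $\cl S_K$ and Proposition \ref{P:transitive}, which is only a cosmetic difference). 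Where you genuinely diverge is the combinatorial endgame of (i)$\Rightarrow$(iii): the paper never identifies $\A_{\cl S_G}$; instead it encodes the support pattern of $\cl M$ as a relation $\kappa$, proves by a calculus of relational compositions that $\overline{\kappa}\circ\kappa_H\circ\kappa=\kappa_G$ and $\kappa\circ\kappa_G\circ\overline{\kappa}=\kappa_H$, and then builds the common graph on $[N]$ from the fibres of $g$ and $f$, checking separately that both maps are pullback maps and that the two quotient graphs coincide. You first prove $\A_{\cl S_G}={\rm span}\{E_{i,j}: N_G[i]=N_G[j]\}$, so that (up to a basis permutation) the fibres in Lemma \ref{l_decomp} are exactly the true-twin classes, and then compute $\cl M^*\cl S_H\cl M$ in matrix units; this buys the sharper conclusion that the common pullback may be taken to be the twin quotient, $G/{\approx_G}\cong H/{\approx_H}$, and it also supplies a detail the paper glosses over, namely that $\A_{\cl S_G}$ is spanned by standard matrix units, which is what legitimises invoking Lemma \ref{l_decomp} in the canonical basis. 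Finally, your point about surjectivity is well taken: the paper's definition of pullback does not require it, yet the verification of (iii)$\Rightarrow$(i) (non-degeneracy of the pattern TRO) needs the two maps to have the same image, and your example of two isolated vertices versus a single edge, both non-surjective pullbacks of the path on three vertices but with non-Morita-equivalent multiplier algebras, shows the literal non-surjective reading of (iii) is strictly weaker; the intended reading, as stated in the introduction, is that $G$ and $H$ admit a common pullback, which is exactly what both your argument and the paper's produce.
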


\begin{proof} 
Suppose that $G$ and $H$ are graphs on $d$ and $d'$ vertices respectively.
The equivalence of the first two items is a consequence of Theorem \ref{t_ncg}.
For their equivalence with item (iii) we will use the following notation.
Given sets $\kappa \subseteq X_1\times X_2$ and $\kappa' \subseteq X_2\times X_3$, we define
\[
\kappa \circ \kappa' 
:= 
\left\{ (x_1,x_3) \in X_1 \times X_3 \mid
\exists x_2\in X_2 \textup{ s.t. } (x_1,x_2)\in \kappa \textup{ and } (x_2,x_3)\in \kappa' \right\}.
\]
We also write 
\[
\ol{\ka} := \{(i,j) \mid (j,i) \in \ka\}.
\]

\smallskip

For the implication [(i)$\Rightarrow$(iii)], let $\M \subseteq \B(\bC^d, \bC^{d'})$ such that 
\begin{equation}\label{eq_tromsg}
\M^* \S_H \M \subseteq \S_G
\qand
\M \S_G \M^* \subseteq \S_H.
\end{equation}
By Proposition \ref{P:reduction} and Theorem \ref{t_ncg} we can assume that 
\[
[\M^* \M] = \A_{\S_G} := \oplus_{j=1}^k M_{r_j}
\qand
[\M \M^*] = \A_{\S_H} := \oplus_{i=1}^m M_{\ell_i}.
\]
By Lemma \ref{l_decomp} we obtain an $N \in \bN$ and onto maps $g \colon [r] \to [N]$ and $f \colon [\ell] \to [N]$ for $r=d$ and $\ell = d'$ such that
\[
\M = \{ (a_{i,j}) \mid a_{i,j} = 0 \text{ if } f(i) \neq g(j) \} \subseteq M_{\ell, r} = M_{d', d}.
\]
By definition we see that if $g(j) = g(j')$ then $E_{j,j'} \in \A_{\S_G} \subseteq \S_{G}$ and thus $j \simeq j'$ in $G$; likewise if $f(i) = f(i')$ then $i \simeq i'$ in $H$.
Let 
\[
\kappa_G := \{(j,j')\in [r]\times [r] \mid j\simeq j'\}
\qand
\kappa_H := \{(i,i')\in [\ell]\times [\ell] \mid i\simeq i'\}.
\]
We also define
\[
\ka := \{(i,j) \mid f(i) = g(j)\},
\]
and note that
\[
\ol{\ka} \circ \ka \subseteq \ka_G
\qand
\ka \circ \ol{\ka} \subseteq \ka_H.
\]
Indeed for $(j', j) \in \ol{\ka} \circ \ka$ there are $(i,j) \in \ka$ and $(j', i) \in \ol{\ka}$; then $g(j) = f(i) = g(j')$ and thus $j \simeq j'$.
Moreover we have that 
\[
\de_G := \{ (j,j) \mid j \in G\} \subseteq \ol{\ka} \circ \ka
\qand
\de_H := \{ (i,i) \mid i \in H\} \subseteq \ka \circ \ol{\ka}.
\]
Indeed due to surjectivity of $f$ for $j$ in $G$ there exists $i$ in $H$ such that $g(j) = f(i)$ and then $(i,j) \in \ka$; likewise for $\de_H$.
Conditions (\ref{eq_tromsg}) translate into 
\begin{equation} %\label{eq_kappas}
\overline{\kappa}\circ \kappa_H \circ \kappa \subseteq \ka_G
\qand
\kappa\circ \kappa_G \circ \overline{\kappa} \subseteq \ka_H.
\end{equation}
Indeed for $(j_1, j_2) \in \overline{\kappa}\circ \kappa_H \circ \kappa$, there is $(i_1, i_2) \in \ka_H$ such that $(i_1, j_1), (i_1, j_2) \in \ka$.
By definition, then $E_{i_1, i_2} \in \S_H$ and $E_{i_1, j_1}, E_{i_2, j_2} \in \M$, and conditions (\ref{eq_tromsg}) yield
\[
E_{j_1, j_2} = E_{i_1, j_1}^* E_{i_1, i_2} E_{i_2, j_2} \in \M^* \S_H \M \subseteq \S_G.
\]
Thus $(j_1, j_2) \in \ka_G$.
By applying successively, and because $\de_G \circ \ka_G = \ka_G = \ka_G \circ \de_G$, we get that
\begin{align*}
\ka_G 
\subseteq
\de_G \circ \ka_G \circ \de_G
\subseteq 
\ol{\ka} \circ \ka \circ \ka_G \circ \ol{\ka} \circ \ka
\subseteq
\overline{\kappa}\circ \kappa_H \circ \kappa \subseteq \ka_G.
\end{align*}
We thus have equalities, and symmetrically we derive
\begin{equation}\label{eq_kappaGH}
\ol{\ka} \circ \ka_H \circ \ka = \ka_G
\qand
\ka \circ \ka_G \circ \ol{\ka} = \ka_H.
\end{equation}
Note that in passing we also obtain that
\begin{equation}
\ol{\ka} \circ \ka \circ \ka_G \circ \ol{\ka} \circ \ka = \ka_G
\qand
\ka \circ \ol{\ka} \circ \ka_H \circ \ka \circ \ol{\ka} = \ka_H.
\end{equation}

Next we define the common graph.
For $v\in [N]$, set
\[
X_v := \{j \in [r] \mid g(j) = v\},
\]
and define a graph $\wt{G}$ with vertex set $[N]$ by letting 
\[
\textup{$v \simeq v'$ if $j \simeq j'$ for some $j \in X_v$ and $j' \in X_{v'}$}.
\] 
Note that the map $g$ realises $G$ as a pullback of $\wt{G}$. 
Indeed, if $j\simeq j'$ in $G$ then $g(j)\simeq g(j')$ in $\wt{G}$ by definition. 
Conversely, if $g(j)\simeq g(j')$ in $\wt{G}$ then there exist $x, x' \in [r]$ such that $g(j) = g(x)$, $g(j') = g(x')$ and $x \simeq x'$. 
We have that $(j, x), (j', x') \in \ol{\kappa} \circ \kappa$ and, since $(x, x')\in \kappa_G$, we get 
\[
\{(j,j')\} = \{(j,x)\}\circ \{(x,x')\}\circ \{(x',j')\}\subseteq \ol{\ka} \circ \ka \circ \ka_G \circ \ol{\ka} \circ \ka = \ka_G,
\]
giving the required $j \simeq j'$.
Similarly, for $v\in [N]$, set
\[
Y_v = \{i \in [l] \mid f(i) = v\},
\]
and define a graph $\wt{H}$ with vertex set $[N]$ by letting 
\[
\textup{$v \simeq v'$ if $i \simeq i'$ for some $i \in Y_v$ and $i' \in Y_{v'}$}. 
\]
By the previous paragraph, $f$ realises $H$ as a pullback of $\wt{H}$. 

We finally show that $\wt{G} = \wt{H}$.
By construction, $\wt{G}$ and $\wt{H}$ have the same vertex set.
Hence we need to show that $v \simeq v'$ in $\wt{G}$ if and only if $v \simeq v'$ in $\wt{H}$.
It suffices to prove only one direction, and then symmetry establishes the claim.
Suppose that $v \simeq v'$ in $\wt{G}$; then there exist $x, x' \in [r]$ such that $g(x) = v$, $g(x') = v'$ and $(x, x')\in \kappa_G$. 
By equation (\ref{eq_kappaGH}), there exist $i, i'\in [l]$ such that $(i, x), (i', x')\in \kappa$ and $(i, i')\in \kappa_H$. 
Since $f(i) = v$ and $f(i') = v'$, we have that $v \simeq v'$ in $\wt{H}$. 

\smallskip

For the implication [(iii) $\Rightarrow$ (i)] let $g \colon [r]\to [N]$ and $f \colon [l]\to [N]$ be maps that realise $G$ and $H$ as pullbacks of a graph on $N$ verties. 
A straightforward verification shows that the TRO
\[
\cl M = \{(a_{i,j})\in M_{l,r} \mid a_{i,j} = 0 \textup{ if } g(j) \neq f(i) \},
\]
implements a TRO-equivalence between $\cl S_G$ and $\cl S_H$. 
\end{proof}

%%%%%%%%%%%%%%%%%%%%%%%%%%%%%%%%
\section{$\Delta$-embeddings of operator systems} \label{s_emb}
%%%%%%%%%%%%%%%%%%%%%%%%%%%%%%%%

Let $\cl S$ and $\cl T$ be operator systems. 
Recall that if $\S \sim_{\Delta} \T$ then there exist unital complete order embeddings $\phi$ and $\psi$ of $\cl S$ and $\cl T$, respectively, such that $\phi(\S) \sim_{\rm TRO} \psi(\T)$, satisfying the additional property that $\ca(\phi(\S)) \simeq \cenv(\S)$ and $\ca(\psi(\T)) \simeq \cenv(\T)$.
In addition, we have that $\S \otimes \fK \simeq \T \otimes \fK$ up to a complete order isomorphism, and thus by equation (\ref{eq_fK}), or alternatively by \cite[Proposition 2.37]{CS20}, we obtain an extension $*$-isomorphism $\cenv(\S) \otimes \fK \to \cenv(\T) \otimes \fK$.
In this section, we investigate the case where only ``half'' of these relations are satisfied; 
namely, we are interested in characterising the existence of a $*$-epimorphism $\cenv(\T) \otimes \fK \to \cenv(\S) \otimes \fK$ that maps $\T \otimes \fK$ onto $\S \otimes \fK$.

In order to keep track of the Hilbert space representations, we will be using the following notation.
For a complete order embedding $\psi$ of $\S$ we write
\[
\psi^{**} \colon \S \to \ca(\psi(\S)) \hookrightarrow \ca(\psi(\S))^{**}.
\]
Recall that we write $\iota_{\env}$ for the embedding $\S \hookrightarrow \cenv(\S)$, and $\iota_{\max}$ for the embedding $\S \hookrightarrow \ca_{\max}(\S)$.

%%%%%%%%%%%%%%%%%%%%%%%%%%%%%%%%
\begin{proposition}\label{P:local embed}
Let $\S$ and $\T$ be operator systems and $\psi \colon \T \to \B(K)$ be a complete order embedding.
The following are equivalent:
\begin{enumerate}
\item there is a $*$-epimorphism $\ca(\psi(\T)) \otimes \fK \to \cenv(\S) \otimes \fK$ that maps $\psi(\T) \otimes \fK$ onto $\iota_{\env}(\S) \otimes \fK$;

\item there is a projection $p$ in the commutant of $\ca(\psi(\T))^{**}$ such that $\S \sim_{\Delta} p [\psi^{**}(\T)]$; 

\item there is a faithful $*$-representation $\rho$ of $\ca(\psi(\T))$ and a projection $p \in \ca(\rho \circ \psi(\T))'$ such that $\S \sim_{\Delta} p [\rho \circ \psi(\T)]$.
\end{enumerate}
\end{proposition}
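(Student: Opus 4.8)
\emph{The plan.} Write $\A:=\ca(\psi(\T))$ for brevity. The backbone is Theorem \ref{T:stable} together with \eqref{eq_fK}: for operator systems $\cl U,\cl V$ one has $\cl U\sim_\Delta\cl V$ if and only if $\cl U\otimes\fK\simeq\cl V\otimes\fK$ (unital complete order isomorphism), and in that case there is a $*$-isomorphism $\cenv(\cl U)\otimes\fK\to\cenv(\cl V)\otimes\fK$ restricting to such an isomorphism on $\cl U\otimes\fK$. I would also use three standard facts: every closed ideal of $\A\otimes\fK$ has the form $I\otimes\fK$ with $I\lhd\A$, so $(\A\otimes\fK)/(I\otimes\fK)\simeq(\A/I)\otimes\fK$; for a closed ideal $I\lhd\A$ there is a central projection $z_I\in\A^{**}$ with $I^{\perp\perp}=z_I\A^{**}$, and $\A/I$ identifies with the norm-closed $*$-subalgebra $(1-z_I)\A$ via $a+I\mapsto(1-z_I)a$; and each C*-extension of an operator system carries a \v{S}ilov $*$-epimorphism onto its C*-envelope. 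The scheme is $(i)\Rightarrow(ii)\Rightarrow(iii)\Rightarrow(i)$.

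\emph{$(i)\Rightarrow(ii)$.} Given $\Theta$ as in (i), its kernel is $I\otimes\fK$ for some $I\lhd\A$, so $\Theta$ factors as $\A\otimes\fK\xrightarrow{q_I\otimes\id}(\A/I)\otimes\fK\xrightarrow{\alpha}\cenv(\S)\otimes\fK$ with $q_I$ the quotient map and $\alpha$ a $*$-isomorphism. Put $p:=1-z_I\in\A^{**}$, a central projection, so that $\A/I\cong p\A$ via $a+I\mapsto pa$; under this identification $q_I(\psi(\T))$ becomes $\{p\psi^{**}(t):t\in\T\}$, whose closure is $p[\psi^{**}(\T)]$. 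Since $\Theta$ maps $\psi(\T)\otimes\fK$ onto the \emph{closed} operator system $\iota_{\env}(\S)\otimes\fK$, the set $(q_I\otimes\id)(\psi(\T)\otimes\fK)=\alpha^{-1}(\iota_{\env}(\S)\otimes\fK)$ is closed and contains $q_I(\psi(\T))\odot\fK$ densely, hence equals $\ol{q_I(\psi(\T))\odot\fK}$; a short manipulation with closures identifies the latter with $p[\psi^{**}(\T)]\otimes\fK$. Thus $p[\psi^{**}(\T)]\otimes\fK\simeq\iota_{\env}(\S)\otimes\fK\simeq\S\otimes\fK$, and Theorem \ref{T:stable} gives $\S\sim_\Delta p[\psi^{**}(\T)]$ with $p$ central in $\A^{**}$, in particular in its commutant.

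\emph{$(ii)\Rightarrow(iii)$ and $(iii)\Rightarrow(i)$.} For the first, take $\rho$ to be the universal representation of $\A$: it is faithful, $\ca(\rho\circ\psi(\T))=\rho(\A)$, and the identification $\A^{**}\cong\rho(\A)''$ carries $\psi^{**}(\T)$ onto $\rho\circ\psi(\T)$ and the commutant of $\A^{**}$ onto $\ca(\rho\circ\psi(\T))'$, so $p$ transports to a projection $p'$ with $\S\sim_\Delta p'[\rho\circ\psi(\T)]$ (a compressed-representation/central-support argument also reverses this, giving $(iii)\Rightarrow(ii)$ directly, should one prefer). For $(iii)\Rightarrow(i)$, set $\S':=p[\rho\circ\psi(\T)]$, so $\S\sim_\Delta\S'$; since $p$ commutes with $\rho(\A)$, the map $x\mapsto px$ is a $*$-homomorphism of $\rho(\A)$, and composing with $\rho$ gives a $*$-epimorphism $\beta\colon\A\to p\rho(\A)=\ca(\S')$ carrying $\psi(\T)$ onto a dense subspace of $\S'$. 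Composing $\beta$ with the \v{S}ilov $*$-epimorphism $\ca(\S')\to\cenv(\S')$, tensoring with $\fK$, and then applying the $*$-isomorphism $\cenv(\S')\otimes\fK\to\cenv(\S)\otimes\fK$ afforded by $\S'\sim_\Delta\S$ (which restricts to $\iota_{\env}(\S')\otimes\fK\simeq\iota_{\env}(\S)\otimes\fK$) produces a $*$-epimorphism $\A\otimes\fK\to\cenv(\S)\otimes\fK$; one then checks it carries $\psi(\T)\otimes\fK$ onto $\iota_{\env}(\S)\otimes\fK$.

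\emph{The main obstacle} is precisely the last clause: that the $*$-epimorphism just built carries $\psi(\T)\otimes\fK$ \emph{onto} $\iota_{\env}(\S)\otimes\fK$ and not merely onto a norm-dense subspace. This amounts to showing that $\beta(\psi(\T))$ is norm-closed in $\ca(\S')$ — equivalently that $\psi(\T)+\ker\beta$ is closed in $\A$ — so that the operator-system quotient map $\psi(\T)/(\psi(\T)\cap\ker\beta)\to\S'$ is a (complete) isometry onto $\S'$, together with the fact that this closedness survives stabilisation, for which one invokes the exactness of $\fK$ (so that $(\psi(\T)\otimes\fK)/((\psi(\T)\cap\ker\beta)\otimes\fK)$ is the genuine quotient and $(\psi(\T)\cap\ker\beta)\otimes\fK=(\psi(\T)\otimes\fK)\cap(\ker\beta\otimes\fK)$). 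The implication $(i)\Rightarrow(ii)$ meets no such difficulty, since there one pulls the \emph{already closed} operator system $\iota_{\env}(\S)\otimes\fK$ back through a $*$-isomorphism; reconciling this asymmetry — i.e.\ verifying the surjectivity in the direction leading back to (i) — is the technical heart of the proof, while everything else is a transcription of Theorem \ref{T:stable}, Proposition \ref{P:reduction}, and elementary von Neumann algebra bookkeeping.
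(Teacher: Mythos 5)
Your route is the paper's route: the same cycle (i)$\Rightarrow$(ii)$\Rightarrow$(iii)$\Rightarrow$(i), with (ii)$\Rightarrow$(iii) handled by the universal representation, and (iii)$\Rightarrow$(i) by exactly the composition the paper writes down (compression by $p$, the \v{S}ilov epimorphism $\ca(p[\rho\circ\psi(\T)])\to\cenv(p[\rho\circ\psi(\T)])$, then the $*$-isomorphism $\cenv(p[\rho\circ\psi(\T)])\otimes\fK\to\cenv(\S)\otimes\fK$ supplied by Theorem \ref{T:stable} together with \cite[Proposition 2.37]{CS20}). In (i)$\Rightarrow$(ii) you stay at the C*-level — kernel of the form $I\otimes\fK$, open central projection $1-z_I\in\ca(\psi(\T))^{**}$ — where the paper instead extends the given epimorphism to the double duals and reads off the kernel projection $P=p\otimes I$ there; this is equivalent bookkeeping, your central $p$ does lie in the commutant of $\ca(\psi(\T))^{**}$ as (ii) requires, and the closure pedantry ($p[\psi^{**}(\T)]$ versus the closure of $p\,\psi^{**}(\T)$) is harmless because Theorem \ref{T:stable} only sees closures. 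So this direction, and (ii)$\Rightarrow$(iii), are fine.

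The problem is that your (iii)$\Rightarrow$(i) is not finished. You assemble the same composition as the paper and then defer the assertion that it carries $\psi(\T)\otimes\fK$ \emph{onto} $\iota_{\env}(\S)\otimes\fK$: as you correctly observe, the image contains a dense subspace of the closed target (since $\Phi_0$ already maps $p[\rho\circ\psi(\T)]\otimes\fK$ onto $\iota_{\env}(\S)\otimes\fK$), so everything reduces to whether $(p\otimes 1)\left(\rho\circ\psi(\T)\otimes\fK\right)$ exhausts $p[\rho\circ\psi(\T)]\otimes\fK$, i.e.\ to closedness of the image of the restriction. The paper's proof treats this as immediate and simply declares the composition to be the required epimorphism; it uses none of the apparatus you invoke. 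Your proposed repair is only a plan: you do not prove that $\psi(\T)+\ker\beta$ is closed, and even granting that, your claim that closedness ``survives stabilisation'' is itself unproved — exactness of $\fK$ identifies the quotient C*-algebra $(\ca(\psi(\T))\otimes\fK)/(\ker\beta\otimes\fK)$, but it does not by itself deliver closedness of the image of the subspace $\psi(\T)\otimes\fK$. So, as submitted, the proposal establishes (i)$\Rightarrow$(ii)$\Rightarrow$(iii) but leaves (iii)$\Rightarrow$(i) with an acknowledged, unresolved gap at precisely the point where the paper's argument concludes.
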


\begin{proof}
\noindent
[(i) $\Rightarrow$ (ii)]: Let $\Phi \colon \ca(\psi(\T)) \otimes \fK \to \cenv(\S) \otimes \fK$ be the given $*$-epimorphism, which extends to a weak*-continuous $*$-epimorphism on the second duals
\[
\Phi^{**} \colon \left(\ca(\psi(\T)) \otimes \fK \right)^{**} \to \left( \cenv(\S) \otimes \fK \right)^{**}.
\]
However by \cite[p. 44, equation (1.62)]{blm} we have that
\[
\left(\fC \otimes \fK \right)^{**} \simeq \fC^{**} \bar\otimes \B(\ell^2)
\textup{ for any C*-algebra $\fC$},
\]
which extends the canonical inclusion $\fC \otimes \fK \hookrightarrow \fC^{**} \bar\otimes \B(\ell^2)$.
Therefore we obtain a weak*-continuous $*$-epimorphism (denoted in the same fashion)
\[
\Phi^{**} \colon \ca(\psi(\T))^{**} \bar\otimes \B(\ell^2) \to \cenv(\S)^{**} \bar\otimes \B(\ell^2).
\]
Let $P$ be the projection in the commutant of $\ca(\psi(\T))^{**} \bar\otimes \B(\ell^2)$ corresponding to $\ker \Phi^{**}$.
Since
\[
\left(\ca(\psi(\T))^{**} \bar\otimes \B(\ell^2) \right)' = \left[ \ca(\psi(\T))^{**} \right]' \otimes \bC I,
\]
we deduce that $P = p \otimes I$ for some projection $p$ in the commutant of $\ca(\psi(\T))^{**}$.
Moreover, by construction, the induced $*$-isomorphism
\[
\Phi^{**} (p \otimes I) \colon p [\ca(\psi(\T))^{**}] \bar\otimes \B(\ell^2) \to \cenv(\S)^{**} \bar\otimes \B(\ell^2); \ 
p x \otimes y \equiv P (x \otimes y) \mapsto \Phi^{**}(x \otimes y),
\]
maps $p [\psi^{**}(\T)] \otimes \fK$ onto $\iota_{\env}(\S) \otimes \fK$.
We derive that $p [\psi^{**}(\T)] \otimes \fK \simeq \iota_{\env}(\S) \otimes \fK$, and thus $p [\psi^{**}(\T)] \sim_{\Delta} \S$.

\smallskip

\noindent
[(ii) $\Rightarrow$ (iii)]: This follows by letting $\rho$ be the universal representation of $\ca(\psi(\T))$.

\smallskip

\noindent
[(iii) $\Rightarrow$ (i)]: Suppose that $p [\rho \circ \psi(\T)] \sim_{\Delta} \S$ for a projection $p$ in the commutant of $\ca(\rho \circ \psi(\T))$.
Then we get a complete order isomorphism $p [\rho \circ \psi(\T)] \otimes \fK \to \S \otimes \fK$, that lifts to a $*$-isomorphism of their C*-envelopes.
Using \cite[Proposition 2.37]{CS20}, we obtain a $*$-isomorphism
\[
\Phi_0 \colon \cenv( p [\rho \circ \psi(\T)] ) \otimes \fK \to \cenv(\S) \otimes \fK
\]
that maps $p [\rho \circ \psi(\T)] \otimes \fK$ onto $\S \otimes \fK$.
Since $p$ is in the commutant of $\ca(\rho \circ \psi(\T))$, and thus in the commutant of $\rho \circ \psi(\T)$, we can consider the $*$-epimorphism
\[
\Phi_1 \colon \ca(\rho \circ \psi(\T)) \to p \ca(\rho \circ \psi(\T)) = \ca( p [\rho \circ \psi(\T)] ) \to \cenv( p [\rho \circ \psi(\T)] ).
\]
The required $*$-epimorphism is given by 
\[
\Phi_0 \circ (\Phi_1 \otimes \id) \circ (\id \otimes \rho) \colon \ca(\psi(\T)) \otimes \fK \to \ca(\rho \circ \psi(\T)) \otimes \fK \to \cenv( p [\rho \circ \psi(\T)] ) \otimes \fK \to \cenv(\S) \otimes \fK,
\]
and the proof is complete.
\end{proof}

%%%%%%%%%%%%%%%%%%%%%%%%%%%%%%%%
\begin{definition}
Let $\S$ and $\T$ be operator systems.

\begin{enumerate}
\item We say that $\S$ \emph{$\Delta$-embeds} in $\T$ (denoted $\S \subset_{\Delta} \T$) if $\S \sim_{\Delta} p [\psi(\T)]$ for a complete order embedding $\psi$ of $\T$ and a projection $p \in \psi(\T)'$.

\item We say that $\S$ \emph{$\Delta_{\env}$-embeds} in $\T$ (denoted $\S \subset_{\Delta_{\env}} \T$) if $\S \sim_\Delta p [\iota_{\env}^{**}(\T )] $ for the complete order embedding $\iota_{\env}^{**} \colon \T \to \cenv(\T)^{**}$ and a projection $p \in \iota_{\env}^{**}(\T)'$.
\end{enumerate}
\end{definition}

%%%%%%%%%%%%%%%%%%%%%%%%%%%%%%%%
\begin{proposition}\label{p_epimm}
Let $\S$ and $\T$ be operator systems.
Then 
\begin{itemize}
\item[(i)]
$\S \subset_{\Delta} \T$ if and only if there is a $*$-epimorphism $\ca_{\max}(\T) \otimes \fK \to \cenv(\S) \otimes \fK$ that maps $\iota_{\max}(\T) \otimes \fK$ onto $\iota_{\env}(\S) \otimes \fK$;

\item[(ii)]
$\S \subset_{\Delta_{\env}} \T$ if and only if there is a $*$-epimorphism $\ca_{\env}(\T) \otimes \fK \to \cenv(\S) \otimes \fK$ that maps $\iota_{\env}(\T) \otimes \fK$ onto $\iota_{\env}(\S) \otimes \fK$.
\end{itemize}
\end{proposition}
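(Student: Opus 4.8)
The plan is to deduce both items straight from Proposition \ref{P:local embed}, by instantiating its complete order embedding $\psi$ suitably and, for item~(i), by composing with the universal $*$-homomorphism out of $\ca_{\max}(\T)$. For item~(ii) I would apply Proposition \ref{P:local embed} with $\psi := \iota_{\env}$, regarded as a complete order embedding of $\T$ into a concrete copy of $\ca_{\env}(\T) = \cenv(\T)$; then $\ca(\psi(\T)) = \ca_{\env}(\T)$ and $\psi^{**} = \iota_{\env}^{**}$, so condition~(i) of that proposition is verbatim the right-hand side of item~(ii), while condition~(ii) asserts the existence of a projection $p$ in the commutant of $\cenv(\T)^{**}$ with $\S \sim_{\Delta} p[\iota_{\env}^{**}(\T)]$. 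The one thing to check is that this commutant equals $\iota_{\env}^{**}(\T)'$: since $\iota_{\env}^{**}(\T)$ is self-adjoint and generates $\cenv(\T)$ as a C*-algebra, which in turn is weak*-dense in $\cenv(\T)^{**}$, the double commutant theorem gives $\iota_{\env}^{**}(\T)' = \cenv(\T)' = (\cenv(\T)^{**})'$. With this identification, condition~(ii) of Proposition \ref{P:local embed} is precisely the definition of $\S \subset_{\Delta_{\env}} \T$, and item~(ii) is done.

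For item~(i) I would argue the two implications separately. Assume first $\S \subset_{\Delta} \T$, witnessed by a complete order embedding $\psi_0 \colon \T \to \B(K)$ and a projection $p \in \psi_0(\T)'$ with $\S \sim_{\Delta} p[\psi_0(\T)]$. Since $\psi_0(\T)$ is a self-adjoint generating set of $\ca(\psi_0(\T))$, the projection $p$ automatically lies in $\ca(\psi_0(\T))'$, so taking $\rho := \id_{\ca(\psi_0(\T))}$ shows that condition~(iii) of Proposition \ref{P:local embed} holds for $\psi_0$; hence so does condition~(i), yielding a $*$-epimorphism $q \colon \ca(\psi_0(\T)) \otimes \fK \to \cenv(\S) \otimes \fK$ that maps $\psi_0(\T) \otimes \fK$ onto $\iota_{\env}(\S) \otimes \fK$. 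The universal property of $\ca_{\max}(\T)$, applied to the unital completely positive map $\psi_0 \colon \T \to \ca(\psi_0(\T))$, produces a $*$-epimorphism $\wt{\psi}_0 \colon \ca_{\max}(\T) \to \ca(\psi_0(\T))$ with $\wt{\psi}_0 \circ \iota_{\max} = \psi_0$, whose restriction to $\iota_{\max}(\T)$ is a surjective complete isometry onto $\psi_0(\T)$. Then $q \circ (\wt{\psi}_0 \otimes \id_{\fK})$ is the required $*$-epimorphism $\ca_{\max}(\T) \otimes \fK \to \cenv(\S) \otimes \fK$ carrying $\iota_{\max}(\T) \otimes \fK$ onto $\iota_{\env}(\S) \otimes \fK$. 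Conversely, given such a $*$-epimorphism I would apply Proposition \ref{P:local embed} with $\psi := \iota_{\max}$ (so that $\ca(\psi(\T)) = \ca_{\max}(\T)$): its condition~(ii) then produces a projection $p$ in the commutant of $\ca_{\max}(\T)^{**}$ with $\S \sim_{\Delta} p[\iota_{\max}^{**}(\T)]$, and since $\iota_{\max}^{**} \colon \T \to \ca_{\max}(\T)^{**}$ is a unital complete isometry, hence a complete order embedding, and $p \in (\ca_{\max}(\T)^{**})' \subseteq \iota_{\max}^{**}(\T)'$, this says exactly $\S \subset_{\Delta} \T$.

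I do not expect a genuine obstacle: the whole argument is bookkeeping on top of Proposition \ref{P:local embed}. The two points that require a little care are the commutant identification in item~(ii) (which rests on the double commutant theorem applied to the self-adjoint generating set $\iota_{\env}^{**}(\T)$) and the reconciliation, in item~(i), between ``$p \in \psi(\T)'$'' as in the definition of $\subset_{\Delta}$ and the stronger-looking ``$p$ in the commutant of $\ca(\psi(\T))$ (or of its bidual)'' as in Proposition \ref{P:local embed} --- the resolution being that a projection commuting with a self-adjoint operator system automatically commutes with the C*-algebra it generates. One should also double-check that the various ``maps onto'' assertions survive tensoring with $\fK$ and composition, which they do because the relevant maps restrict to surjective complete isometries on the operator-system parts.
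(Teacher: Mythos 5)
Your proposal is correct and follows essentially the same route as the paper: item (ii) is the case $\psi=\iota_{\env}$ of Proposition \ref{P:local embed}, and item (i) combines the implication (iii)$\Rightarrow$(i) of that proposition with the universal property of $\ca_{\max}(\T)$ in one direction and (i)$\Rightarrow$(ii) with $\psi=\iota_{\max}$ in the other. The only difference is that you make explicit two small checks the paper leaves implicit (that a projection commuting with the self-adjoint set $\psi(\T)$ commutes with $\ca(\psi(\T))$, and the identification of commutants in the bidual), which is harmless.
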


\begin{proof}
(i) 
Let $\psi$ be a complete order embedding of $\T$ so that $\S \sim_{\Delta} p [\psi(\T)]$ for a projection in the commutant of $\psi(\T)$.
By the implication [(iii)$\Rightarrow$(i)] of Proposition \ref{P:local embed}, there exists a $*$-epimorphism $\ca(\psi(\T)) \otimes \fK \to \cenv(\S) \otimes \fK$ that maps $\psi(\T) \otimes \fK$ onto $\iota_{\env}(\S) \otimes \fK$.
By the universal property of $\ca_{\max}(\T)$, there exists a $*$-epimorphism
\[
\ca_{\max}(\T) \otimes \fK \to \ca(\psi(\T)) \otimes \fK \to \cenv(\S) \otimes \fK,
\]
which maps $\iota_{\max}(\T) \otimes \fK$ onto $\iota_{\env}(\S) \otimes \fK$.

Conversely, suppose that there exists a $*$-epimorphism $\ca_{\max}(\T) \otimes \fK \to \cenv(\S) \otimes \fK$ that maps $\iota_{\max}(\T) \otimes \fK$ onto $\iota_{\env}(\S) \otimes \fK$.
By the implication [(i)$\Rightarrow$(ii)] of Proposition \ref{P:local embed}, $\S \sim_{\Delta} p[\iota_{\max}^{**}(\T)]$ for a projection $p$ in the commutant of $\iota_{\max}^{**}(\T)$.

\smallskip

\noindent
(ii) 
This is the equivalence of items (i) and (ii) of Proposition \ref{P:local embed} for $\psi = \iota_{\env}$.
\end{proof}

Proposition \ref{p_epimm} has the following immediate corollary.

%%%%%%%%%%%%%%%%%%%%%%%%%%%%%%%%
\begin{corollary}
$\Delta_{\env}$-embedding of operator systems is a transitive relation.
\end{corollary}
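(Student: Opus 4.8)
The plan is to invoke the characterisation of $\Delta_{\env}$-embedding furnished by Proposition \ref{p_epimm}(ii), which translates the relation $\S \subset_{\Delta_{\env}} \T$ into the existence of a single $*$-epimorphism between the stabilised C*-envelopes that carries the stabilised copy of $\T$ onto the stabilised copy of $\S$. Since the composition of $*$-epimorphisms is again a $*$-epimorphism, and since surjectivity onto a distinguished subset is preserved under composition, transitivity should then be purely formal.

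Concretely, suppose $\S \subset_{\Delta_{\env}} \T$ and $\T \subset_{\Delta_{\env}} \R$. First I would apply Proposition \ref{p_epimm}(ii) twice to obtain $*$-epimorphisms $\Phi \colon \cenv(\T) \otimes \fK \to \cenv(\S) \otimes \fK$ with $\Phi(\iota_{\env}(\T) \otimes \fK) = \iota_{\env}(\S) \otimes \fK$, and $\Psi \colon \cenv(\R) \otimes \fK \to \cenv(\T) \otimes \fK$ with $\Psi(\iota_{\env}(\R) \otimes \fK) = \iota_{\env}(\T) \otimes \fK$. Next I would form $\Phi \circ \Psi \colon \cenv(\R) \otimes \fK \to \cenv(\S) \otimes \fK$, which is a $*$-epimorphism, and observe that $(\Phi \circ \Psi)(\iota_{\env}(\R) \otimes \fK) = \Phi(\iota_{\env}(\T) \otimes \fK) = \iota_{\env}(\S) \otimes \fK$. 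Appealing once more to Proposition \ref{p_epimm}(ii), this yields $\S \subset_{\Delta_{\env}} \R$, completing the proof.

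I do not anticipate any genuine obstacle: the only point requiring a moment's care is the bookkeeping of directions, namely that the epimorphisms run from the ``larger'' C*-envelope to the ``smaller'' one, so that the composite runs from $\cenv(\R)\otimes\fK$ down to $\cenv(\S)\otimes\fK$ as needed. One could instead argue directly at the level of projections in the biduals (combining the two projections via the representation-theoretic descriptions in Proposition \ref{P:local embed} together with transitivity of $\sim_{\Delta}$), but routing everything through Proposition \ref{p_epimm} keeps the argument short and clean.
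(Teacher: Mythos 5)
Your argument is correct and is exactly the intended one: the paper derives the corollary as an immediate consequence of Proposition \ref{p_epimm}(ii), i.e.\ by composing the two $*$-epimorphisms between stabilised C*-envelopes and noting that the composite still maps $\iota_{\env}(\R)\otimes\fK$ onto $\iota_{\env}(\S)\otimes\fK$. Nothing further is needed.
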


%%%%%%%%%%%%%%%%%%%%%%%%%%%%%%%%
\begin{remark}\label{R:abelian}
In the case of commutative C*-algebras we have that the relations $\Delta$-embeding and $\Delta_{\env}$-embedding both coincide with the existence of surjective $*$-homomorphisms.
Indeed, $\Delta$-equivalence for C*-algebras is Rieffel's strong Morita equivalence, and thus corresponds to $*$-isomorphisms for commutative C*-algebras.

We also note that $\Delta_{\env}$-embedding is not anti-symmetric modulo $\Delta$-equivalence.
An example to this end is given in \cite[Example 4.9]{Ele19}.
Let $X$ and $Y$ be non-homeomorphic compact Hausdorff spaces such that there are continuous injections
\[
\vartheta \colon X \to Y
\qand
\varphi \colon Y \to X.
\]
For example, one can consider a disc and a torus in $\bC$, and use the fact that the disc can be homeomorphically mapped onto a disc inside the torus.
Then there are $*$-epimorphisms
\[
\wt{\vartheta} \colon {\rm C}(Y) \to {\rm C}(X)
\qand
\wt{\varphi} \colon {\rm C}(X) \to {\rm C}(Y).
\]
Hence
\[
{\rm C}(X) \subset_{\Delta_{\env}} {\rm C}(Y)
\qand
{\rm C}(Y) \subset_{\Delta_{\env}} {\rm C}(X).
\]
However, ${\rm C}(X)$ is not $\Delta$-equivalent to ${\rm C}(Y)$, as $X$ and $Y$ are not 
homeomoprhic.
\end{remark}

%%%%%%%%%%%%%%%%%%%%%%%%%%%%%%%%
\begin{remark}
In contrast with the case of commutative C*-algebras, $\Delta_{\env}$-embedding is rigid for operator systems of graphs.
Indeed, the C*-envelope of an operator system of a graph is a finite-dimensional C*-algebra, with the summands corresponding to the disjoint components of the graph \cite{op}.
Therefore $\S_G \subset_{\Delta_{\env}} \S_H$ if and only if $\S_G \sim_{\Delta} \S_{H'}$ for $H'$ a disjoint union of connected components of $H$; equivalently, by Corollary \ref{c_graphs}, if and only if $G$ and $H'$ are pullbacks of isomorphic graphs for $H'$ a disjoint union of connected components of $H$.

In more detail, recall the characterization obtained in item (ii) of Proposition \ref{p_epimm}.
Suppose that $\cenv(\S_G) \simeq \oplus_{i=1}^k M_{r_i}$ and $\cenv(\S_H) \simeq \oplus_{j=1}^m M_{\ell_j}$.
For a $*$-epimorphism
\[
\Phi \colon \oplus_{j=1}^m M_{\ell_j} \otimes \fK \to  \oplus_{i=1}^k M_{r_i} \otimes \fK,
\]
we have that its kernel (up to a rearrangement of the indices) is of the form $\oplus_{j=k+1}^{m} M_{\ell_j} \otimes \fK$.
Therefore by restricting to $\oplus_{j=1}^k \bC^{\ell_j} \otimes \ell^2$ we obtain a $*$-isomorphism 
\[
\oplus_{j=1}^k M_{\ell_j} \otimes \fK \to  \oplus_{i=1}^k M_{r_i} \otimes \fK.
\]
Its restriction to $\S_H$ annihilates the elements of $H$ appearing in the components for $j=k+1, \dots, m$, and therefore we have a unital complete order isomorphism $\S_{H'} \to \S_{G}$, where $H'$ is the graph from the generators in the first $m$ components of $H$.
By Corollary \ref{c_graphs} we have that $H'$ and $G$ are pullbacks of isomorphic graphs.
\end{remark}

The following result will be used subsequently, but it may also be interesting in its own right. 

%%%%%%%%%%%%%%%%%%%%%%%%%%%%%%%%
\begin{lemma}\label{L:split}
Let $\phi \colon \A \to \B$ and $\psi \colon \B \to \A$ be $*$-epimorphisms.
Let $\phi\colon \A^{**} \to \B^{**}$ and $\psi^{**} \colon \A^{**} \to \B^{**}$ be the w*-continuous $*$-epimorphic extensions to their double duals.
Then there exist central projections $p \in \A^{**}$ and $q \in \B^{**}$ such that
\[
p \A^{**} \simeq q \B^{**}
\qand
(1_{\A^{**}} - p) \A^{**} \simeq (1_{\B^{**}} - q) \B^{**}.
\]
Moreover, if $\X \subseteq \A$ and $\Y \subseteq \B$ are closed subspaces with $\phi(\X) = \Y$ and $\psi(\Y) = \X$ then 
\[
p \X \simeq q \Y
\qand
(1_{\A^{**}} - p) \X \simeq (1_{\B^{**}} - q) \Y.
\]
\end{lemma}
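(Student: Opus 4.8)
The statement is a von Neumann algebra dichotomy for a pair of mutually surjective $*$-homomorphisms, together with a refinement tracking two subspaces. The natural approach is to exploit that $\phi^{**}$ and $\psi^{**}$ are normal $*$-epimorphisms between W*-algebras, so each has a central kernel projection, and then to iterate the two maps to build the matching pair of central projections via a Cantor--Bernstein type argument. Recall that a normal $*$-epimorphism $\pi\colon \M \to \N$ between von Neumann algebras has $\ker\pi = (1-z)\M$ for a central projection $z\in\M$, and $\pi$ restricts to a normal $*$-isomorphism $z\M \simeq \N$. Applying this to $\phi^{**}$ gives a central projection $e\in\A^{**}$ with $e\A^{**}\simeq \B^{**}$, and to $\psi^{**}$ a central projection $f\in\B^{**}$ with $f\B^{**}\simeq\A^{**}$.

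The core step is to run the usual back-and-forth. Write $\alpha := \phi^{**}|_{e\A^{**}}\colon e\A^{**}\xrightarrow{\ \sim\ }\B^{**}$ and $\beta := \psi^{**}|_{f\B^{**}}\colon f\B^{**}\xrightarrow{\ \sim\ }\A^{**}$. Then $\beta^{-1}\colon \A^{**}\to f\B^{**}$ and $\alpha^{-1}\colon\B^{**}\to e\A^{**}$ are normal $*$-isomorphisms onto cut-downs. Consider the central projections $p_0 = 1_{\A^{**}}-\beta(f\B^{**}\text{'s unit})$... more cleanly: set $A_0 := 1_{\A^{**}} - \beta(1_{f\B^{**}}) = 1_{\A^{**}} - (1_{\A^{**}})$ is zero since $\beta$ is unital onto $\A^{**}$; so in fact $\psi^{**}$ being surjective means $\beta$ is onto $\A^{**}$, i.e. $f\B^{**}\simeq\A^{**}$ as \emph{unital} algebras. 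The honest Cantor--Bernstein here is at the level of the \emph{central projections} $1-z_\phi$ and $1-z_\psi$ (the kernels). One builds, by alternately pushing the kernel projections forward and back, an increasing sequence of central projections in each algebra whose supremum gives the desired splitting. Concretely: put $q_1 := 1_{\B^{**}}-f$ (a central projection in $\B^{**}$, the kernel of $\psi^{**}$), pull it to a central projection $p_1\in \A^{**}$ via $\alpha^{-1}$, then push $p_1$... and so on; take $p := \bigvee_n p_{2n}$ or the appropriate even/odd union. On the matching complements the two isomorphisms glue (they agree on the overlap by construction) to give a single normal $*$-isomorphism $p\A^{**}\simeq q\B^{**}$ and $(1-p)\A^{**}\simeq (1-q)\B^{**}$; since the pieces were built from $\alpha$, $\beta$, these isomorphisms are cut-downs of $\phi^{**}$, $\psi^{**}$ respectively.

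Granting the central projections $p,q$ with a $*$-isomorphism $\Theta\colon p\A^{**}\to q\B^{**}$ that is a restriction of (a power of) $\phi^{**}$ built alternately from $\phi^{**},\psi^{**}$, the subspace refinement is then essentially formal. If $\X\subseteq\A$ with $\phi(\X)=\Y$ and $\psi(\Y)=\X$, then $\phi^{**}(\X)=\Y$, $\psi^{**}(\Y)=\X$ inside the double duals, and hence every one of the alternating maps carries $p\X$ into $q\Y$ and back; since $\Theta$ and $\Theta^{-1}$ are built from these, $\Theta(p\X)=q\Y$, and similarly on the complements $(1-p)\X \simeq (1-q)\Y$. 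One should be slightly careful that $\X$ is only assumed closed in norm, not weak*-closed, so $p\X$ means the image of $\X$ under the (weak*-continuous, completely isometric on cut-downs) map $x\mapsto px$; completeness of the isometry follows because cutting by a central projection and applying a normal $*$-isomorphism are complete isometries.

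\smallskip

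\textbf{Main obstacle.} The genuine work is in the Cantor--Bernstein bookkeeping: one must choose the alternating sequence of central projections so that the resulting isomorphisms on the two halves are mutually compatible (agree on overlaps) and patch to a single map, and one must verify that the residual ``limit'' piece $1-\bigvee p_n$ is handled correctly — here one uses that on that residual piece both $\phi^{**}$ and $\psi^{**}$ restrict to isomorphisms, so either may be taken. A clean alternative that sidesteps the explicit iteration is to invoke comparison of central projections together with the fact (from $\phi^{**},\psi^{**}$ normal and surjective) that $z_\phi\A^{**}\simeq\B^{**}$ and $z_\psi\B^{**}\simeq\A^{**}$, and then run the abstract Schroeder--Bernstein argument for von Neumann algebras in the center-valued / central-projection lattice, which is a complete lattice, so the construction terminates. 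Either way, the one point demanding care is functoriality of the construction in the subspaces $\X,\Y$, which is why one wants the patched isomorphism $\Theta$ to be visibly assembled from $\phi^{**}$ and $\psi^{**}$ rather than produced abstractly.
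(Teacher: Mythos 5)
Your outline is essentially the paper's own argument: a Schr\"oder--Bernstein construction on central projections, using that the normal epimorphisms $\phi^{**},\psi^{**}$ restrict to weak*-isomorphisms off their central kernel projections, iterating the composite map, splitting into even/odd pieces plus a residual piece on which $\phi^{**}$ is already injective, and deducing the subspace statement because the resulting isomorphisms are visibly assembled from $\phi^{**}$ and $\psi^{**}$. The "bookkeeping" you flag is exactly what the paper carries out: it sets $\rho:=\psi^{**}|_{f_1\B^{**}}\circ\phi^{**}|_{e_2\A^{**}}$, defines the decreasing sequence $e_{n+2}=\rho^{-1}(e_n)$, takes $p=\sum_{n\ge 0}(e_{2n}-e_{2n+1})$ (so the residual $\wedge_n e_n$ sits inside $1-p\le e_1$), and checks that $\phi^{**}|_{r\A^{**}}\circ\rho^{-1}|_{p\A^{**}}$ and $\phi^{**}|_{(1-p)\A^{**}}$ give the two isomorphisms, from which $p\X\simeq q\Y$ and $(1-p)\X\simeq(1-q)\Y$ follow just as you indicate.
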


\begin{proof}
As usual, we denote by $\cl Z(\cl M)$ the centre of a von Neumann algebra $\cl M$. 
Let $e_1 \in \Z(\A^{**})$ and $f_1 \in \Z(\B^{**})$ be such that the restrictions
\[
\phi^{**}|_{e_1 \A^{**}} \colon e_1 \A^{**} \to \B^{**}
\qand
\psi^{**}|_{f_1 \B^{**}} \colon f_1 \B^{**} \to \A^{**}
\] 
are weak*-isomorphisms.
Let
\[
e_2 := e_1 (\phi^{**}|_{e_1 \A^{**}})^{-1}(f_1) \leq e_1;
\]
thus, $e_2\in \cl Z(\cl A^{**})$ and the map
\[
\rho := \psi^{**}|_{f_1 \B^{**}} \circ \phi^{**}|_{e_2 \A^{**}} \colon e_2 \A^{**} \to f_1 \B^{**} \to \A^{**}
\]
is a weak*-isomorphism.

We define a family of central projections in $\A^{**}$ by letting
\[
e_{n+2} = \rho^{-1}(e_n) \foral n \in \bb{N}.
\]
By definition, $\rho(e_2) = 1$ and so 
\[
e_3 = \rho^{-1}(e_1) \leq \rho^{-1}(1) = e_2.
\]
Continuing inductively, 
\[
e_{n+2} = \rho^{-1}(e_n) \leq \rho^{-1}(e_{n-1}) = e_{n+1} \foral n \geq 2,
\]
and hence $\{e_n\}_{n\in \bb{N}}$ is a decreasing sequence of central projections.
Let 
\[
p := \sum_{n=0}^\infty e_{2n} - e_{2n+1};
\]
then 
\[
1_{\A^{**}} - p = \left( \sum_{n=0}^\infty e_{2n + 1} - e_{2n + 2} \right) \oplus \wedge_{n=0}^\infty e_n \leq e_1.
\]
Furthermore, let 
\[
r := \rho^{-1}(p) = \sum_{n=1}^{\infty} e_{2n} - e_{2n+1} \qand q := \phi^{**}(r).
\]
By the definition of the projections $e_n$, $n\in \bb{N}$, the projection $r$ satisfies the conditions
\[
r = r e_2 \leq p \qand e_1 = r \oplus (1_{\A^{**}} - p).
\]

To conclude the proof we claim that the maps
\[
\phi^{**}|_{r \A^{**}} \circ \rho^{-1}|_{p \A^{**}} \colon p \A^{**} \to q \B^{**}
\]
and
\[
\phi^{**}|_{(1_{\A^{**}} - p) \A^{**}} \colon (1_{\A^{**}} - p) \A^{**} \to (1_{\B^{**}} - q) \B^{**}
\]
are weak*-isomorphisms.
Indeed, first note that, since $r \leq e_1$, the composition $\phi^{**}|_{r \A^{**}} \circ \rho^{-1}$ is faithful.
For the surjectivity, we note that 
\[
\left( \phi^{**}|_{r \A^{**}} \circ \rho^{-1} \right) (p \A^{**}) = \phi^{**}(r \A^{**}) = q \B^{**}.
\]
On the other hand, $(1_{\A^{**}} - p) \leq e_1$ and hence $\phi^{**}|_{(1_{\A^{**}} - p) \A^{**}}$ is faithful.
For the surjectivity, note that
\[
\phi^{**}(r \oplus (1_{\A^{**}} - p)) = \phi^{**}(e_1) = 1_{\B^{**}},
\]
and so
\[
\phi^{**}(1_{\A^{**}} - p) = 1_{\B^{**}} - \phi^{**}(r) = 1_{\B^{**}} - q.
\]
Thus, 
\[
\phi^{**}|_{(1_{\A^{**}} - p) \A^{**}} \left( (1_{\A^{**}} - p) \A^{**} \right) = \phi^{**}(1_{\A^{**}} - p) \B^{**} = (1_{\B^{**}} - q) \B^{**},
\]
which completes the proof of the first statement.

Let $\X \subseteq \A$ and $\Y \subseteq \B$ are closed subspaces such that $\phi(\X) = \Y$ and $\psi(\Y) = \X$.
It suffices to check that the restrictions of the constructed maps satisfy the required properties. 
However, this follows directly by surjectivity of the restrictions of $\phi$ and $\psi$; indeed, 
\[
\left(\phi^{**}|_{r \A^{**}} \circ \rho^{-1}|_{p \A^{**}}\right)(p \X)
=
\left(\phi^{**}|_{r \A^{**}} \circ \rho^{-1}|_{p \A^{**}}\right)(p) \left(\phi^{**}\circ \psi^{**}\circ \phi^{**}\right)(\X) = q \Y,
\]
and
\[
\phi^{**}|_{(1_{\A^{**}} - p) \A^{**}} ((1_{\A^{**}} - p) \X)
=
\phi^{**}|_{(1_{\A^{**}} - p) \A^{**}} (1_{\A^{**}} - p) \phi^{**}(\X)
=
(1_{\B^{**}} - q) \Y,
\]
as required.
\end{proof}

%%%%%%%%%%%%%%%%%%%%%%%%%%%%%%%%
\begin{remark}
The following chain of isomorphisms provides a short visualisation of the proof:
\begin{align*}
p\A^{**} \oplus (1 - p) \A^{**} 
& \simeq^{\rho^{-1} \oplus \id} r \A^{**} \oplus (1-p) \A^{**} 
\simeq^{\phi^{**}|_{e_1 \cdot} \oplus \phi^{**}|_{e_1 \cdot }} q \B^{**} \oplus (1-q) \B^{**}.
\end{align*}
\end{remark}

Lemma \ref{L:split} and Proposition \ref{p_epimm} have the following consequence.

%%%%%%%%%%%%%%%%%%%%%%%%%%%%%%%%
\begin{theorem}\label{T:anti-sym} 
Let $\S$ and $\T$ be operator systems.
If $\S \subset_{\Delta_{\env}} \T $ and $\T \subset_{\Delta_{\env}} \S$ then there exist projections  $p$ in the commutant of $\cenv(\T)^{**}$ and $q$ in the commutant of $\cenv(\S)^{**}$ such that 
\[
p [\iota_{\env}^{**}(\T)] \sim_{\Delta} q [\iota_{\env}^{**}(\S)]
\qand
(1 - p) [\iota_{\env}^{**}(\T)] \sim_{\Delta} (1 - q) [\iota_{\env}^{**}(\S)].
\]
\end{theorem}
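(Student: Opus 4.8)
The statement is a consequence of Lemma~\ref{L:split} together with the characterisation of $\Delta_{\env}$-embedding in Proposition~\ref{p_epimm}~(ii). As usual we may assume that $\cl S$ and $\cl T$ are complete.

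First I would translate the two embedding hypotheses. By Proposition~\ref{p_epimm}~(ii), the relations $\cl S\subset_{\Delta_{\env}}\cl T$ and $\cl T\subset_{\Delta_{\env}}\cl S$ furnish $*$-epimorphisms
\[
\Phi\colon \cenv(\cl T)\otimes\fK\longrightarrow\cenv(\cl S)\otimes\fK
\qand
\Psi\colon \cenv(\cl S)\otimes\fK\longrightarrow\cenv(\cl T)\otimes\fK
\]
such that $\Phi$ maps $\iota_{\env}(\cl T)\otimes\fK$ onto $\iota_{\env}(\cl S)\otimes\fK$ and $\Psi$ maps $\iota_{\env}(\cl S)\otimes\fK$ onto $\iota_{\env}(\cl T)\otimes\fK$. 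Since the minimal tensor product is norm closed, $\X:=\iota_{\env}(\cl T)\otimes\fK$ and $\Y:=\iota_{\env}(\cl S)\otimes\fK$ are closed subspaces of $\A:=\cenv(\cl T)\otimes\fK$ and $\B:=\cenv(\cl S)\otimes\fK$, respectively, with $\Phi(\X)=\Y$ and $\Psi(\Y)=\X$. Hence Lemma~\ref{L:split} applies to the pair $(\Phi,\Psi)$ and yields central projections $P\in\A^{**}$ and $Q\in\B^{**}$, $*$-isomorphisms
\[
P\A^{**}\simeq Q\B^{**}
\qand
(1_{\A^{**}}-P)\A^{**}\simeq (1_{\B^{**}}-Q)\B^{**},
\]
whose restrictions are complete order isomorphisms $P\X\simeq Q\Y$ and $(1_{\A^{**}}-P)\X\simeq(1_{\B^{**}}-Q)\Y$.

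Next I would unwind the ambient von Neumann algebras. As in the proof of Proposition~\ref{P:local embed}, $\A^{**}=(\cenv(\cl T)\otimes\fK)^{**}\simeq\cenv(\cl T)^{**}\bar\otimes\B(\ell^2)$, extending the canonical inclusion, so that $\X$ is carried onto the norm closure of $\iota_{\env}^{**}(\cl T)\odot\fK$; likewise for $\B$, $\Y$ and $\cl S$. Because the centre of $\cenv(\cl T)^{**}\bar\otimes\B(\ell^2)$ is $\Z(\cenv(\cl T)^{**})\otimes\bC I_{\ell^2}$, the central projection $P$ has the form $P=p\otimes I_{\ell^2}$ with $p$ a central projection of $\cenv(\cl T)^{**}$; in particular $p\in\iota_{\env}^{**}(\cl T)'$. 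Symmetrically $Q=q\otimes I_{\ell^2}$ with $q$ a central projection of $\cenv(\cl S)^{**}$ lying in $\iota_{\env}^{**}(\cl S)'$. Computing on elementary tensors and passing to norm closures identifies
\[
P\X=\bigl(p[\iota_{\env}^{**}(\cl T)]\bigr)\otimes\fK,\quad Q\Y=\bigl(q[\iota_{\env}^{**}(\cl S)]\bigr)\otimes\fK,
\]
and the analogous formulas with $1-P$, $1-Q$ in place of $P$, $Q$. Thus the isomorphisms delivered by Lemma~\ref{L:split} become
\[
\bigl(p[\iota_{\env}^{**}(\cl T)]\bigr)\otimes\fK\simeq\bigl(q[\iota_{\env}^{**}(\cl S)]\bigr)\otimes\fK
\qand
\bigl((1-p)[\iota_{\env}^{**}(\cl T)]\bigr)\otimes\fK\simeq\bigl((1-q)[\iota_{\env}^{**}(\cl S)]\bigr)\otimes\fK.
\]

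Finally, each of the corners $p[\iota_{\env}^{**}(\cl T)]$, $(1-p)[\iota_{\env}^{**}(\cl T)]$, $q[\iota_{\env}^{**}(\cl S)]$, $(1-q)[\iota_{\env}^{**}(\cl S)]$ is an operator system, being a unital selfadjoint subspace of the corresponding corner of $\cenv(\cl T)^{**}$ or $\cenv(\cl S)^{**}$ (the relevant projection commutes with $\iota_{\env}^{**}(\cdot)$). Therefore the two displayed stable isomorphisms, combined with Theorem~\ref{T:stable}, give
\[
p[\iota_{\env}^{**}(\cl T)]\sim_\Delta q[\iota_{\env}^{**}(\cl S)]
\qand
(1-p)[\iota_{\env}^{**}(\cl T)]\sim_\Delta(1-q)[\iota_{\env}^{**}(\cl S)],
\]
which is the assertion. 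The substantive content, namely the alternating construction of matching central summands of $\A^{**}$ and $\B^{**}$, has already been carried out in Lemma~\ref{L:split}; the only real care I expect to be needed is in the third paragraph, in handling the non-unitality of $\fK$, in checking that the central projections of $\cenv(\cl T)^{**}\bar\otimes\B(\ell^2)$ are precisely the $p\otimes I_{\ell^2}$, and in matching $P\X$ with $\bigl(p[\iota_{\env}^{**}(\cl T)]\bigr)\otimes\fK$ up to a harmless passage to norm closures.
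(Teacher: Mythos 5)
Your proposal is correct and follows essentially the same route as the paper's proof: Proposition \ref{p_epimm}(ii) to obtain the two $*$-epimorphisms at the stabilised level, Lemma \ref{L:split} applied with $\X=\iota_{\env}(\T)\otimes\fK$ and $\Y=\iota_{\env}(\S)\otimes\fK$, the identification $(\fC\otimes\fK)^{**}\simeq\fC^{**}\bar\otimes\B(\ell^2)$ to write the central projections as $p\otimes I$ and $q\otimes I$, and Theorem \ref{T:stable} to pass from the resulting stable isomorphisms of corners to $\Delta$-equivalence. The extra care you flag (closures, centrality of $P$, matching $P\X$ with $(p[\iota_{\env}^{**}(\T)])\otimes\fK$) is exactly the bookkeeping the paper also performs.
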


\begin{proof}
By Proposition \ref{p_epimm} we have a $*$-epimorphism $\cenv(\T) \otimes \fK \to \cenv(\S) \otimes \fK$ that maps $\iota_{\env}(\T) \otimes \fK$ onto $\iota_{\env}(\S) \otimes \fK$, and a $*$-epimorphism $\cenv(\S) \otimes \fK \to \cenv(\T) \otimes \fK$ that maps $\iota_{\env}(\S) \otimes \fK$ onto $\iota_{\env}(\T) \otimes \fK$.
By Lemma \ref{L:split} we obtain central projections $P \in (\cenv(\T) \otimes \fK)^{**}$ and $Q \in (\cenv(\S) \otimes \fK)^{**}$, such that
\[
P (\cenv(\T) \otimes \fK)^{**} \simeq Q (\cenv(\S) \otimes \fK)^{**}
\qand
(1 - P) (\cenv(\T) \otimes \fK)^{**} \simeq (1 - Q) (\cenv(\S) \otimes \fK)^{**}.
\]
Recall that
\[
(\fC \otimes \fK)^{**} \simeq \fC^{**} \otimes \B(\ell^2)
\]
by a $*$-isomorphism that fixes the embedding of $\fC \otimes \fK$ for any C*-algebra $\fC$.
Since the center of $\fC^{**} \otimes \B(\ell^2)$ is $\Z(\fC^{**}) \otimes I$, we have that $P = p \otimes I$ and $q = q \otimes I$ for central projections in $\cenv(\T)^{**}$ and $\cenv(\S)^{**}$, respectively.
Therefore, after applying the $*$-isomorphism of the double duals, we can restrict to $\iota_{\env}^{**}(\T) \otimes \fK$ and $\iota_{\env}^{**}(\S) \otimes \fK$, Lemma \ref{L:split} yields
\[
\left( p [\iota_{\env}^{**}(\T)] \right) \otimes \fK \simeq \left( q [\iota_{\env}^{**}(\S)] \right) \otimes \fK
\qand
\left( (1 - p) [\iota_{\env}^{**}(\T)] \right) \otimes \fK \simeq \left( (1 - q) [\iota_{\env}^{**}(\S)] \right) \otimes \fK,
\]
which completes the proof.
\end{proof}

%%%%%%%%%%%%%%%%%%%%%%%%%%%%%%%%
\begin{example}
The converse of Theorem  \ref{T:anti-sym} does not hold.
Indeed, let 
\[
\A := \{(x_n) \mid  \lim_{n\to\infty} x_n \mbox{ exists}\}
\]
as a C*-subalgebra of $\ell^\infty$.
The embedding
\[
\ell^1 \hookrightarrow \A; y \mapsto L_y;
\ 
L_y(x) = \sum_n x_n y_n
\]
gives a $*$-isomorphism $\A^{**} \simeq \ell^\infty$.
We can also see $\A \oplus \A$ as a C*-subalgebra of $\ell^\infty \oplus \ell^\infty$; thus we also have $(\A \oplus \A)^{**} \simeq \ell^{\infty} \oplus \ell^\infty$.
By setting $p = 1 \oplus 0$ we see that
\[
p (\A \oplus \A) \simeq \A \qand (1-p)(\A \oplus \A) \simeq \A.
\]

Recall by Remark \ref{R:abelian} that the $\Delta_{\env}$-embedding refers to $*$-epimorphisms in the case of commutative C*-algebras.
Although we have a $*$-epimorphism $\A \oplus \A \to \A$, there does not exist a $*$-epimorphism $\A \to \A \oplus \A$ (and thus the converse of Theorem  \ref{T:anti-sym} does not hold).
Towards this end, let a $*$-epimorphism $\phi \colon \A \to \A \oplus \A$, yielding a $*$-epimorphism of the second duals. 
Let $r \in \ell^\infty$ be a projection such that the induced map 
\[
\phi' \colon \ell^\infty \cdot r \to \ell^\infty \oplus \ell^\infty; \ x r \mapsto \phi(x),
\]
is a $*$-isomorphism.
Let $x, y \in \A$ be such that $\phi(x) = p$ and $\phi(y) = 1-p$, for $p = 1 \oplus 0$.
Since $\phi'(xy) = \phi(x y) = 0$, we have that $x y r = 0$.
On the other hand, since $r_n \neq 0$ for infinitely many $n \in \bN$, we have that $x_n = 0$ for infinitely many $n \in \bN$, or $y_n = 0$ for infinitely many $n \in \bN$.
Without loss of generality, assume that $x_n = 0$ for infinitely many $n \in \bN$, implying that $x \in c_0$ (since $x \in \A$).
Thus $x r \in c_0$.
Since $\phi'$ is a (unitary implemented) $*$-isomorphism, we have that $\phi(c_0 \cdot r) = \phi'(c_0 \cdot r) = c_0 \oplus c_0$ as fixing the compact operators, thus $\phi(x r) \in c_0 \oplus c_0$.
This leads to the contradiction that $p \in c_0 \oplus c_0$.
\end{example}  

\smallskip

\noindent {\bf Data availability statement.}
For the purposes of publication of this article, we note that data sharing is not applicable as no datasets were generated or analysed during the underlying research.

\smallskip

\noindent {\bf Conflict of interest statement.}
On behalf of all authors, the corresponding author states that there is no conflict of interest.

\smallskip

\noindent {\bf Open access statement.}
For the purpose of open access, the second author has applied a Creative Commons Attribution (CC BY) license to any Author Accepted Manuscript version arising.

%%%%%%%%%%%%%%%%%%%%%%%%%%%%%%%%

\end{document}